\newtheorem{theorem}{Théorème}[section]
\newtheorem{proposition}[theorem]{Proposition}
\newtheorem{corollary}[theorem]{Corollaire}
\newtheorem*{examples}{Exemples}
\newtheorem*{example}{Exemple}
\newtheorem{lemma}[theorem]{Lemme}
\newtheorem{algor}[theorem]{Algorithme}
\theoremstyle{definition}
\newtheorem{definition}[theorem]{Définition}
\newtheorem*{remark}{Remarque}
\numberwithin{equation}{section}
\newcommand{\algo}[7]
{
\begin{algor}{{\tt #1}{\rm (#2)}}\label{#7}\end{algor}
\vspace{-6pt}\noindent{\it #3}.

{\bf Input:} #4

{\bf Output:} #5

\newcounter{#7}
\begin{list}{\textbf{\arabic{#7}.}}{\usecounter{#7}}
#6\end{list}\vspace{3pt}
}
\newcommand{\bZ}{\mathbb{Z}}
\newcommand{\bC}{\mathbb{C}}
\newcommand{\bN}{\mathbb{N}}
\newcommand{\bM}{\mathbb{N^{*}}}
\DeclareMathOperator{\id}{Id}
\DeclareMathOperator{\SL}{SL}
\newcommand{\cat}{\cup}
\title[Comptage des quiddités sur les corps finis et sur quelques anneaux $\mathbb{Z}/N\mathbb{Z}$]{Comptage des quiddités sur les corps finis et sur quelques anneaux $\mathbb{Z}/N\mathbb{Z}$}
\author{Michael Cuntz$^{a}$, Flavien Mabilat$^{b}$}
\date{}
\subjclass[2020]{20H05, 05E99, 13F60, 51M20}
\keywords{$\lambda$-quiddity; modular group; Coxeter's frieze}
\address{$^{a}$ Leibniz Universität Hannover\\
Institut für Algebra, Zahlentheorie und Diskrete Mathematik, Fakultät für Mathematik und Physik, Welfengarten 1, D-
30167 Hannover, Germany}
\email{cuntz@math.uni-hannover.de}
\address{$^{b}$ Laboratoire de Mathématiques de Reims,
UMR9008 CNRS et Université de Reims Champagne-Ardenne, 
U.F.R. Sciences Exactes et Naturelles 
Moulin de la Housse - BP 1039 
51687 Reims cedex 2,
France}
\email{flavien.mabilat@univ-reims.fr}
\begin{document}

\maketitle

\selectlanguage{french}
\begin{abstract}
Les $\lambda$-quiddités de taille $n$ sont des $n$-uplets d'éléments d'un ensemble fixé, solutions d'une équation matricielle apparaissant lors de l'étude des frises de Coxeter. Celles-ci peuvent être considérées sur divers ensembles avec des structures très variables d'un ensemble à l'autre. L'objectif principal de ce texte est d'obtenir des formules explicites donnant le nombre de $\lambda$-quiddités de taille $n$ sur un corps fini et sur les anneaux $\mathbb{Z}/N\mathbb{Z}$ lorsque $N$ est de la forme $4m$ avec $m$ sans facteur carré. On donnera également quelques éléments sur le comportement asymptotique du nombre de $\lambda$-quiddités vérifiant une condition d'irréductibilité sur $\mathbb{Z}/N\mathbb{Z}$ lorsque $N$ tend vers l'infini.
\end{abstract}

\selectlanguage{english}
\begin{abstract}
The $\lambda$-quiddities of size $n$ are $n$-tuples of elements of a fixed set, solutions of a matrix equation appearing in the study of Coxeter's friezes. These can be considered on various sets with very different structures from one set to another. The main objective of this text is to obtain explicit formulas giving the number of $\lambda$-quiddities of size $n$ over finite fields and over the rings $\mathbb{Z}/N\mathbb{Z}$ with $N=4m$ and $m$ square free. We will also give some elements about the asymptotic behavior of the number of $\lambda$-quiddities verifying an irreducibility condition over $\mathbb{Z}/N\mathbb{Z}$ when $N$ goes to the infinity.
\end{abstract}

\selectlanguage{french}

\ \\
\begin{flushright}
 \textit{\og Why should only Fermat have a Little Theorem? \fg} 
\\ John H.\ Conway, \textit{A Characterization of the Equilateral Triangles and Some Consequences}
\end{flushright}
\ \\

\section{Introduction}
\label{Intro}

Les frises de Coxeter sont des arrangements de nombres dans le plan vérifiant certaines relations arithmétiques (voir section \ref{cox}). Introduites au début des années soixante-dix par H.\ S.\ M.\ Coxeter (voir \cite{Cox}), elles sont aujourd'hui au centre de très nombreux travaux, du fait notamment de leurs liens étendus avec de multiples branches des mathématiques (voir par exemple \cite{Mo1}). En particulier, l'étude de ces objets passe par la considération de l'équation matricielle suivante:
\[M_{n}(a_1,\ldots,a_n):=\begin{pmatrix}
   a_{n} & -1 \\[4pt]
    1    & 0 
   \end{pmatrix}
\begin{pmatrix}
   a_{n-1} & -1 \\[4pt]
    1    & 0 
   \end{pmatrix}
   \cdots
   \begin{pmatrix}
   a_{1} & -1 \\[4pt]
    1    & 0 
    \end{pmatrix}=-\id.\]
\noindent En effet, ce sont les solutions de cette équation qui permettent de construire des frises de Coxeter (voir \cite{BR} et \cite{CH} proposition 2.4 ainsi que la section \ref{cox}). Cela conduit notamment à chercher l'ensemble des solutions de cette équation ainsi que le nombre de solutions de taille fixée. Dans cette optique, S.\ Morier-Genoud a calculé ce dernier dans le cas où les $a_{i}$ appartiennent à un corps fini (voir \cite{Mo2} Théorème 1 et la section \ref{cox}). 

Par ailleurs, l'étude de l'équation précédente amène naturellement à considérer l'équation généralisée ci-dessous :
\[M_{n}(a_1,\ldots,a_n)=\pm \id.\]

\noindent Les solutions de celles-ci sont appelées $\lambda$-quiddités (voir \cite{C}) et l'objectif principal est de mieux connaître ces dernières pour un certain nombre d'ensembles. Outre la recherche de résultats généraux, tel le nombre de $\lambda$-quiddités de taille fixée, on cherche également à restreindre l'étude des solutions à la recherche de $\lambda$-quiddités particulières. Pour effectuer cela, on introduit une notion de solutions irréductibles, basée sur une opération entre uplets d'éléments d'un ensemble fixé (voir \cite{C} et la section suivante). Cela permet de réduire l'étude des $\lambda$-quiddités à la bonne connaissance des solutions irréductibles. Dans cette optique, on dispose d'une classification précise des $\lambda$-quiddités irréductibles sur plusieurs sous-ensembles de $\bC$ (voir notamment \cite{C}), ainsi que d'un certain nombre d'informations sur celles-ci pour les cas des anneaux $\bZ/N\bZ$ (voir par exemple \cite{M1, M5}). De plus, on a également une construction récursive des $\lambda$-quiddités sur $\mathbb{N}^{*}$ et des formules permettant de compter ces dernières (voir \cite{CO,O}).

On s'intéresse ici à la recherche des $\lambda$-quiddités sur les corps finis et sur les anneaux $\bZ/N\bZ$. En d'autres termes, on considère la résolution sur un anneau $A$ fixé de l'équation.
\begin{equation}
\label{p}
\tag{$E_{A}$}
M_{n}(a_1,\ldots,a_n)=\pm \id.
\end{equation}
L'étude de cette équation est d'autant plus intéressante qu'elle est liée à la recherche des différentes écritures des éléments des sous-groupes de congruence ci-dessous :
\[\hat{\Gamma}(N)=\{C \in \SL_{2}(\mathbb{Z})~{\rm tel~que}~C= \pm \id~( {\rm mod}~N)\}.\]
En effet, on sait que toutes les matrices de $\SL_{2}(\bZ)$ peuvent s'écrire sous la forme $M_{n}(a_1,\ldots,a_n)$, avec les $a_{i}$ des entiers naturels strictement positifs. Cette écriture n'étant pas unique, on est naturellement amené à chercher toutes les écritures de cette forme pour une matrice, ou un ensemble de matrices, donné.

Notre objectif dans cet article est d'obtenir le nombre de $\lambda$-quiddités sur les corps finis et sur les anneaux $\bZ/N\bZ$ lorsque $N=4m$ avec $m$ sans facteur carré. Plus précisément, on va démontrer le théorème ci-dessous qui complète le résultat déjà obtenu par S.\ Morier-Genoud. Avant de donner des énoncés précis, on introduit quelques notations. Si $X$ est un ensemble fini, $\left|X\right|$ représente le cardinal de $X$.
Pour $q$ la puissance d'un nombre premier $p$, $B \in \SL_{2}(\mathbb{F}_{q})$ et $n \in \mathbb{N}^{*}$, on note
\[ u_{n,q}^{B} := |\{ (a_1,\ldots,a_n)\in\mathbb{F}_{q}^n \mid M_{n}(a_{1},\ldots,a_{n})=B\}|. \]

\noindent Notons $u_{n,q}^{\pm}:=u_{n,q}^{\pm\id}$.
De plus, si $m \in \bM$ et $q \in \mathbb{N}_{>1}$, on note :
\[ [m]_{q}:=\frac{q^{m}-1}{q-1}, \quad \binom{m}{2}_{q}:=\frac{(q^{m}-1)(q^{m-1}-1)}{(q-1)(q^{2}-1)}. \]

\noindent Par ailleurs, on aura besoin à plusieurs reprises de la formule suivante : $\frac{q^{2m-2}-1}{q^{2}-1}+q\binom{m-1}{2}_{q}=\binom{m}{2}_{q}$.

\begin{theorem}
\label{25}
Soient $q$ la puissance d'un nombre premier $p>2$ et $n\in\mathbb{N}$, $n>4$.

\smallskip
i) Si $n$ est impair alors on a $u_{n, q}^{+}=u_{n, q}^{-}=\left[\frac{n-1}{2}\right]_{q^{2}}$.

\smallskip
ii) Si $n$ est pair alors il existe $m \in \bM$ tel que $n=2m$. 
\begin{itemize}

\item Si $m$ est pair on a : $u_{n,q}^{+}=(q-1)\binom{m}{2}_{q}+q^{m-1}$.
\item Si $m \geq 3$ est impair on a : $u_{n,q}^{+}=(q-1)\binom{m}{2}_{q}$.

\end{itemize}

\end{theorem}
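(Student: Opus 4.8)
The natural approach is to set up a recursion on $n$ by conditioning on the last entry $a_n$, and to track simultaneously all the "target matrices" that can occur. Writing $M_n(a_1,\dots,a_n) = \begin{pmatrix} a_n & -1 \\ 1 & 0\end{pmatrix} M_{n-1}(a_1,\dots,a_{n-1})$, one sees that $u_{n,q}^B = \sum_{a \in \mathbb{F}_q} u_{n-1,q}^{B'}$ where $B' = \begin{pmatrix} a & -1 \\ 1 & 0\end{pmatrix}^{-1} B = \begin{pmatrix} 0 & 1 \\ -1 & a\end{pmatrix} B$. So the first step is to understand, for a matrix $B = \pm\mathrm{Id}$, which conjugacy-type information about $B'$ is actually needed. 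The key observation (already implicit in Morier-Genoud's treatment of $u_{n,q}^{-\mathrm{Id}}$, i.e. the frieze count) is that $u_{n,q}^B$ depends only on a few invariants of $B$ — essentially the trace $\mathrm{tr}(B)$ together with whether $B$ is $\pm\mathrm{Id}$ or is a non-central element. I would first prove a lemma isolating these invariants and reducing the problem to counting words of a given length and trace in the generating set $\left\{\begin{pmatrix} a & -1 \\ 1 & 0\end{pmatrix} : a \in \mathbb{F}_q\right\}$, which is exactly the setting where $[m]_{q^2}$ and $\binom{m}{2}_q$ (Gaussian binomials over $\mathbb{F}_{q^2}$ and $\mathbb{F}_q$) appear as counts of flags.

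Concretely, I expect the heart of the argument to be a closed system of linear recurrences. Let $f_n(t)$ denote the number of $(a_1,\dots,a_n)\in\mathbb{F}_q^n$ with $M_n(a_1,\dots,a_n)$ of trace $t$ and not scalar, and let $g_n^\pm = u_{n,q}^{\pm\mathrm{Id}}$. Partitioning $\mathbb{F}_q$ according to whether $\mathrm{tr}(B')$ lands in a given coset, and using that the map $a \mapsto \mathrm{tr}\!\left(\begin{pmatrix} 0 & 1 \\ -1 & a\end{pmatrix}B\right)$ is affine in $a$, one gets a recursion expressing $g_n^\pm$ and the relevant values of $f_n$ in terms of those at level $n-1$. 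Summing over $t$ and over the non-scalar matrices with fixed eigenvalue behaviour gives manageable scalar sequences; the parity of $n$ enters because $M_n = \pm\mathrm{Id}$ forces a determinant/sign constraint, and when $n = 2m$ the parity of $m$ governs whether $-\mathrm{Id}$ is an $m$-th power type obstruction — this is the source of the two sub-cases in (ii). I would solve these recurrences explicitly, checking that the solutions are $[\frac{n-1}{2}]_{q^2}$ in the odd case and $(q-1)\binom{m}{2}_q$ plus the correction term $q^{m-1}$ (present only for $m$ even) in the even case; the algebraic identity $\frac{q^{2m-2}-1}{q^2-1} + q\binom{m-1}{2}_q = \binom{m}{2}_q$ flagged in the introduction is precisely what makes the induction step close.

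The symmetry $u_{n,q}^+ = u_{n,q}^-$ for $n$ odd deserves a separate, clean argument: conjugating by $\mathrm{diag}(1,-1)$ sends $\begin{pmatrix} a & -1 \\ 1 & 0\end{pmatrix}$ to $\begin{pmatrix} -a & -1 \\ 1 & 0\end{pmatrix}$, so $(a_1,\dots,a_n) \mapsto (-a_1,\dots,-a_n)$ multiplies $M_n$ by $(-1)^n$; for $n$ odd this is a bijection between the solution sets of $M_n = \mathrm{Id}$ and $M_n = -\mathrm{Id}$. For $n$ even the same map is the identity on the equation $M_n = +\mathrm{Id}$, which is why only $u_{n,q}^+$ is asserted in (ii) and why no analogous collapse occurs.

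**Main obstacle.** I expect the bookkeeping in the even case to be the delicate part: one must track not just the trace of the running matrix but enough additional data (whether it is diagonalizable over $\mathbb{F}_q$, over $\mathbb{F}_{q^2}$, or is $\pm$ unipotent/scalar) so that the recursion genuinely closes, and then correctly identify the initial conditions at small $n$ — the hypothesis $n > 4$ suggests the formulas fail or degenerate for small $n$, so pinning down exactly where the clean regime starts, and verifying the base cases $n = 5,6$ (or wherever the induction is seeded) against the closed forms, will require care. The appearance of the extra summand $q^{m-1}$ only for $m$ even is the telltale sign that a fixed-point or sign term must be handled separately, and getting that term exactly right — rather than off by a factor of $q-1$ or a power of $q$ — is where I would be most worried about an error.
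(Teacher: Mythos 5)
Your treatment of the odd case is correct and coincides with the paper's: conjugating by $\mathrm{diag}(-1,1)$ sends $M_{1}(a)$ to $-M_{1}(-a)$, so $(a_{1},\dots,a_{n})\mapsto(-a_{1},\dots,-a_{n})$ is a bijection between the solution sets of $M_{n}=\id$ and $M_{n}=-\id$ when $n$ is odd, and the common value is then read off from Morier-Genoud's formula for $u_{n,q}^{-}$.

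For the even case there is a genuine gap. Your whole plan rests on the claim that $u_{n,q}^{B}$ depends only on $\mathrm{tr}(B)$ together with whether $B$ is central, so that the recursion obtained by conditioning on $a_{n}$ closes on the finitely many states $f_{n}(t)$, $g_{n}^{\pm}$. That claim is never proved, and as stated it is false: the generating set $\{M_{1}(a)\}$ is not a union of conjugacy classes, so $u_{n,q}^{B}$ is not a class function. For instance $u_{2,q}^{B}=1$ exactly when the $(2,2)$-entry of $B$ equals $-1$ (and $0$ otherwise), which is not conjugation-invariant and is certainly not determined by $\mathrm{tr}(B)$; likewise $u_{1,q}^{M_{1}(0)}=1$ while $u_{1,q}^{B}=0$ for the conjugate trace-zero matrix $B=\bigl(\begin{smallmatrix}0&1\\-1&0\end{smallmatrix}\bigr)$ when $p>2$. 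Without this reduction your system does not close: peeling off $a_{n}$ replaces the target $\pm\id$ by the specific non-central matrices $\pm M_{1}(a)^{-1}$, then by $\pm M_{1}(b)^{-1}M_{1}(a)^{-1}$, and so on, and none of the invariants you propose to track controls these. The paper closes its recursion by a different mechanism: it conditions on the entry $a_{2}$ and uses the word-shortening identities $M_{3}(x,0,y)=-M_{1}(x+y)$ and $M_{3}(x,1,y)=M_{2}(x-1,y-1)$, together with the rescaling $(a_{i})\mapsto(\lambda^{\pm1}a_{i})$, to obtain the two-term recurrence $u_{2m,q}^{+}=(q-1)u_{2m-1,q}^{+}+q\,u_{2m-2,q}^{-}$; the odd-case symmetry and Morier-Genoud's values for $u^{-}$ then give the stated formulas via $\frac{q^{2m-2}-1}{q^{2}-1}+q\binom{m-1}{2}_{q}=\binom{m}{2}_{q}$. (Its Theorem \ref{thm412} shows that a closed linear recurrence on the pair $u_{n}^{B},u_{n}^{-B}$ does exist, but it is derived from these same local identities, not from any conjugacy-class bookkeeping.) To repair your argument you would need either such local surgery identities or the full moduli-space machinery; as written, the even case is a plan resting on an incorrect lemma, and the computation itself is not carried out.
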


Couplé au théorème \ref{36}, on obtient l'ensemble des formules de comptage des $\lambda$-quiddités sur les corps finis. Ensuite, on établira une formule générale de récurrence concernant le nombre de solutions de l'équation $M_{n}(a_1,\ldots,a_n)=\pm B$, avec $B$ un élément quelconque de $\SL_{2}(\mathbb{F}_{q})$ , qui permettra de retrouver de façon indépendante l'ensemble des formules souhaitées. 

\begin{theorem}
\label{thm412}
Soient $q$ la puissance d'un nombre premier $p$, $B \in \SL_{2}(\mathbb{F}_{q})$ et $n \in \mathbb{N}$, $n>4$. On a la relation suivante:
\[ u_{n,q}^{B} = (q-1)(u_{n-1,q}^{B}-q u_{n-3,q}^{-B}) + q u_{n-2,q}^{-B} + q(u_{n-2,q}^{B}-q u_{n-4,q}^{-B}). \]
\end{theorem}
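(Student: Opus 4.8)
The plan is to pass to the group ring $\mathcal{A}:=\bZ[\SL_2(\mathbb{F}_q)]$ and turn the statement into one identity there. Write $r_a:=\begin{pmatrix} a & -1\\ 1 & 0\end{pmatrix}$ and set $t:=\sum_{a\in\mathbb{F}_q} r_a\in\mathcal{A}$. Since $M_n(a_1,\ldots,a_n)=r_{a_n}r_{a_{n-1}}\cdots r_{a_1}$, expanding $t^n$ shows that $u_{n,q}^{B}$ is exactly the coefficient of $B$ in $t^n$. Let $\varepsilon:=-\id$; it is central, $\varepsilon^2=1$, and the coefficient of $B$ in $\varepsilon x$ equals the coefficient of $-B$ in $x$ for any $x\in\mathcal{A}$. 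Hence the theorem is equivalent to the identity in $\mathcal{A}$
\[
t^{n}=(q-1)t^{n-1}+q(1+\varepsilon)\,t^{n-2}-q(q-1)\varepsilon\, t^{n-3}-q^{2}\varepsilon\, t^{n-4},\qquad n>4.
\]

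Next I would introduce the auxiliary elements $R:=\sum_{M\in\SL_2(\mathbb{F}_q)}M$ and $\mathcal{C}:=\sum_{M:\,M_{22}=0}M$ and record the elementary facts $gR=Rg=R$ for every $g\in\SL_2(\mathbb{F}_q)$, hence $tR=Rt=qR$ (and more generally $t^{j}R=q^{j}R$) since $t$ is a sum of $q$ group elements; also $\varepsilon R=R$ and $\varepsilon\mathcal{C}=\mathcal{C}$, because $M\mapsto -M$ preserves both $\SL_2(\mathbb{F}_q)$ and $\{M_{22}=0\}$. Then I would compute the low powers of $t$ by counting preimages in $\SL_2(\mathbb{F}_q)$. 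Multiplying out, $r_ar_br_c=\begin{pmatrix} abc-a-c & 1-ab\\ bc-1 & -b\end{pmatrix}$, and a case analysis on the target $M$ (exactly one triple $(a,b,c)$ maps to $M$ when $M_{22}\neq 0$, exactly $q$ triples map to each matrix $\begin{pmatrix} x & 1\\ -1 & 0\end{pmatrix}$, and none otherwise) yields $t^{3}=R-\mathcal{C}+q\varepsilon\, t$. Similarly, inspecting $r_aM$ for $M$ with $M_{22}=0$ gives $t\,\mathcal{C}=\sum_{M:\,M_{22}\neq 0}M=R-\mathcal{C}$. Therefore $t^{4}=t\cdot t^{3}=tR-t\mathcal{C}+q\varepsilon\, t^{2}=(q-1)R+\mathcal{C}+q\varepsilon\, t^{2}$.

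Now I would eliminate $\mathcal{C}$: substituting $\mathcal{C}=R+q\varepsilon\, t-t^{3}$ into the formula for $t^{4}$ gives the key relation $qR=t^{4}+t^{3}-q\varepsilon\, t^{2}-q\varepsilon\, t$. Multiplying this by $t^{k-1}$ and using $t^{k-1}R=q^{k-1}R$ gives $q^{k}R=t^{k+3}+t^{k+2}-q\varepsilon\, t^{k+1}-q\varepsilon\, t^{k}$ for every $k\ge 1$. Taking $k=n-3$ and $k=n-4$ (both $\ge 1$ since $n>4$), multiplying the second equation by $q$, and subtracting eliminates $R$ and produces exactly the desired identity in $\mathcal{A}$; reading off the coefficient of $B$ then gives the theorem. (Equivalently one may first deduce the $n=5$ case from the two relations with $k=2$ and $k=1$, then multiply by $t^{n-5}$.)

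I expect the only genuine work to be in the two preimage counts in $\SL_2(\mathbb{F}_q)$ used to obtain $t^{3}=R-\mathcal{C}+q\varepsilon\, t$ and $t\,\mathcal{C}=R-\mathcal{C}$: one must check that the case distinctions (whether a lower-right entry vanishes) are exhaustive and that the multiplicities are exactly $1$ or $q$ as claimed. Everything afterwards is formal manipulation inside the commutative subalgebra of $\mathcal{A}$ generated by $t$ and $\varepsilon$, together with the absorbing property $tR=qR$; a quick consistency check via coefficient sums ($t^{k}$ has coefficient sum $q^{k}$, $R$ has coefficient sum $q(q^{2}-1)$, etc.) can be used to guard against sign errors along the way.
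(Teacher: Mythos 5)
Your argument is correct, and I checked the two counting facts it rests on: for $M\in\SL_{2}(\mathbb{F}_q)$ with $M_{22}\neq 0$ exactly one triple $(a,b,c)$ satisfies $r_ar_br_c=M$ (forced by $b=-M_{22}$ and the determinant), while the matrices with $M_{22}=0$ reached by $r_ar_br_c$ are exactly those with $M_{12}=1$, $M_{21}=-1$, each hit $q$ times; and $r_a^{-1}N$ has vanishing lower-right entry for exactly one $a$ when $N_{22}\neq0$ and for no $a$ otherwise, giving $t\mathcal{C}=R-\mathcal{C}$. From there the elimination of $\mathcal{C}$ and $R$ is purely formal and does yield the stated identity for all $n>4$ (I also confirmed it numerically for $q=2$, $n=5$). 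However, your route is genuinely different from the paper's. The paper works directly on the solution sets: it partitions $\{(a_1,\ldots,a_n):M_n(a_1,\ldots,a_n)=B\}$ according to whether $a_2=0$, whether $a_2\neq0$ with $a_2a_3\neq1$, and whether $a_2a_3=1$, and in each case exhibits an explicit bijection onto a set of shorter solutions via the local surgery identities $M_3(x,0,y)=-M_1(x+y)$, $M_4(x,u,v,y)=M_3(\cdots)$ and $M_5(x,a,a^{-1},b,y)=M_3(\cdots)$ of Lemme \ref{form}; adding the three counts gives the recurrence term by term, and each term of the formula has a combinatorial meaning. Your group-ring version concentrates all the combinatorics into the two products $t^3$ and $t\mathcal{C}$ and then exploits the absorbing element $R$; in effect you show that $t$ annihilates the polynomial $X^{n-4}(X-q)(X+1)(X^2-q\varepsilon)$, which explains where the recurrence comes from, makes the uniformity in $B$ transparent, and sets up the spectral analysis needed for closed forms. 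The paper's bijective proof is more elementary and self-contained at the level of tuples; yours is shorter once the formalism is in place and generalizes more readily. Both are valid proofs of the theorem.
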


\indent On démontrera également, dans la section \ref{preuve26}, des formules pour le nombre de $\lambda$-quiddités sur $\bZ/4\bZ$. Pour cela, on note :
\[ w_{n,N}^{\pm} := |\{ (a_1,\ldots,a_n)\in(\mathbb{Z}/N\mathbb{Z})^n \mid M_{n}(a_{1},\ldots,a_{n})=\pm \id\}|. \]

\begin{theorem}
\label{26}

Soit $n$ un entier naturel supérieur à 3.

\smallskip
i) Si $n$ est impair alors on a l'égalité suivante : 
\[w_{n, 4}^{+}=w_{n, 4}^{-}=\frac{4^{n-2}-2^{n-3}}{3}.\]

\smallskip
ii) Si $n$ est pair alors il existe $m \in \bM$ tel que $n=2m$.
\begin{itemize}

\item Si $m$ est pair on a :
\[w_{n,4}^{+}=\frac{4^{n-2}+2^{n-1}}{3}~~~et~~~w_{n,4}^{-}=\frac{4^{n-2}- 2^{n-2}}{3}.\]
\item Si $m$ est impair on a :
\[w_{n,4}^{+}=\frac{4^{n-2}- 2^{n-2}}{3}~~~et~~~w_{n,4}^{-}=\frac{4^{n-2}+2^{n-1}}{3}.\]

\end{itemize}

\end{theorem}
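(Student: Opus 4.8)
The plan is to count $\lambda$-quiddities over $\bZ/4\bZ$ by analysing the reduction maps $\bZ/4\bZ \to \bZ/2\bZ = \mathbb{F}_2$ and $\bZ/4\bZ \to \mathbb{F}_2$ fibrewise. Concretely, write $a_i = b_i + 2c_i$ with $b_i \in \{0,1\}$ a lift of the residue mod $2$ and $c_i \in \{0,1\}$. Reducing the matrix equation $M_n(a_1,\ldots,a_n) = \pm\id$ modulo $2$ gives a necessary condition on $(b_1,\ldots,b_n)$, namely that $(b_1,\ldots,b_n)$ is a $\lambda$-quiddity over $\mathbb{F}_2$ (note $\id = -\id$ there). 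So first I would recall (or re-derive) the count of $\lambda$-quiddities over $\mathbb{F}_2$: the number of $(b_1,\ldots,b_n) \in \mathbb{F}_2^n$ with $M_n(b) = \id$, which by Morier-Genoud's theorem (cited via Theorem \ref{36}, or directly since $q=2$) is $u_{n,2}^{+} = u_{n,2}^{\pm\id}$. The key combinatorial input is then: for each fixed admissible $(b_1,\ldots,b_n)$ over $\mathbb{F}_2$, how many of the $2^n$ lifts $(c_1,\ldots,c_n)$ yield $M_n(a) = +\id$ over $\bZ/4\bZ$, how many yield $-\id$, and how many yield neither.

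The main computational step is a first-order expansion in the ideal $2\bZ/4\bZ$. Since $(2\bZ/4\bZ)^2 = 0$, one has, writing $A_i(x) = \begin{pmatrix} x & -1 \\ 1 & 0 \end{pmatrix}$, that $A_{a_i} = A_{b_i} + 2c_i E$ where $E = \begin{pmatrix} 1 & 0 \\ 0 & 0 \end{pmatrix}$, hence
\[
M_n(a) = M_n(b) + 2\sum_{i=1}^{n} c_i \, P_i(b) E Q_i(b) \pmod 4,
\]
where $P_i(b)$ and $Q_i(b)$ are the appropriate partial products of the $A_{b_j}$'s (reduced mod $2$). So given that $M_n(b) \equiv \id \pmod 2$, whether a lift gives $+\id$ or $-\id$ is governed by an affine-linear condition over $\mathbb{F}_2$ in the variables $c_i$: we need $\tfrac{1}{2}(M_n(b) \mp \id) + \sum_i c_i\, \overline{P_i(b) E Q_i(b)} = 0$ in $M_2(\mathbb{F}_2)$. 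The number of solutions is either $0$ or $2^{n-r}$ where $r$ is the rank of the linear map $c \mapsto \sum_i c_i \overline{P_i(b) E Q_i(b)}$; and the $+$ and $-$ cases, when both solvable, have the same count $2^{n-r}$ and are translates of each other.

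Thus I would show: (a) for every admissible $b$ the relevant rank $r$ is the same constant (I expect $r$ to depend only on $n$, not on $b$ — this uniformity is the crux), so each admissible $b$ contributes a fixed number $2^{n-r}$ of lifts to $w_{n,4}^{+}$ (resp. $w_{n,4}^{-}$) whenever the affine condition is consistent; and (b) determine for which $b$ each of $+\id$, $-\id$ is attainable. Summing $2^{n-r}$ over the admissible $b$'s (whose count is the known $\mathbb{F}_2$-quiddity number, involving $2^{n-2}$-type expressions) then produces the stated formulas, the $4^{n-2}$ term coming from the product of the $\mathbb{F}_2$-count with $2^{n-r}$, and the $2^{n-3}$ correction terms coming from the lower-order terms in the $\mathbb{F}_2$-count and from the parity-of-$m$ dependence of which sign is attainable. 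The dependence on the parity of $m$ (equivalently $n/2$) should enter through the value of $M_n(b)$ on the "extra" admissible tuples — precisely the phenomenon already visible in Theorem \ref{25} ii), where $q^{m-1}$ appears only for $m$ even.

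The step I expect to be the main obstacle is establishing the rank uniformity in (a): showing that the $\mathbb{F}_2$-linear map $c \mapsto \sum_i c_i\, \overline{P_i(b)EQ_i(b)}$ has a rank independent of the admissible tuple $b$, and identifying that rank. A clean way around a brute-force analysis would be to set up the whole computation recursively — deriving a recurrence for $w_{n,4}^{\pm}$ analogous to Theorem \ref{thm412} by conditioning on the last one or two entries $a_n$, $a_{n-1}$ (which over $\bZ/4\bZ$ range over $4$ values each), reducing $w_{n,4}^{\pm}$ to a linear combination of $w_{n-1,4}^{\mp}$, $w_{n-2,4}^{\pm}$, etc. — and then checking the claimed closed forms satisfy that recurrence together with base cases $n = 3,4,5,6$ computed by hand. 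I would likely present the recursive route as the main argument, since it parallels the field case and sidesteps the linear-algebra bookkeeping, using the fibre analysis above only to fix the small cases and to explain the origin of the $2^{n-3}$ terms.
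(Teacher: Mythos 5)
Your fallback route --- derive a recurrence for $w_{n,4}^{\pm}$ by conditioning on an entry, compute base cases by hand, and verify the closed forms against the recurrence --- is exactly what the paper does, and it is the right strategy. But the proposal never actually produces the recurrence, and that derivation is where essentially all of the content lies. The reason it is not a routine transcription of Theorem \ref{thm412} is that $\bZ/4\bZ$ is not a field: the entry $\overline{2}$ is neither zero nor invertible, so the dichotomy ``$a_2=0$ versus $a_2\in\mathbb{F}_q^{*}$'' of the field case collapses. The values $a_2=\overline{1},\overline{-1},\overline{0}$ are handled by Lemma \ref{form} i)--iii) as one expects (contributing $w_{n-1,4}^{B}$, $w_{n-1,4}^{-B}$ and $4w_{n-2,4}^{-B}$ respectively), but the value $a_2=\overline{2}$ forces a further split on $a_3$: one applies Lemma \ref{form} iv) after checking that $2a_3-1$ is always $\pm\overline{1}$ (hence invertible), and the four sub-cases contribute $2w_{n-2,4}^{B}+2w_{n-2,4}^{-B}$. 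The upshot is the coupled recurrence $w_{n,4}^{B}=w_{n-1,4}^{B}+w_{n-1,4}^{-B}+2w_{n-2,4}^{B}+6w_{n-2,4}^{-B}$ (and its mirror with $B$ and $-B$ exchanged in the $w_{n-2}$ terms), whose characteristic roots are $4,-2,2i,-2i$; it is precisely the imaginary roots $\pm 2i$ that produce the dependence on $n\bmod 4$, i.e.\ on the parity of $m$, visible in the statement. Without exhibiting this recurrence your plan to ``check the claimed closed forms satisfy it'' has nothing to check against, so as written there is a genuine gap even though the strategy is correct.

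Concerning your primary route (lifting $\mathbb{F}_2$-quiddities through the square-zero ideal $2\bZ/4\bZ$): the uniformity you hope for in step (a) cannot do the work you assign to it. If the rank $r$ were independent of the admissible tuple $b$ and, as you note, the $+\id$ and $-\id$ fibres were translates of each other of common size $2^{n-r}$ whenever both are nonempty, then the entire difference $w_{n,4}^{+}-w_{n,4}^{-}$ --- which equals $\pm\bigl(4\cdot 2^{n-3}+2^{n-2}\bigr)/3\neq 0$ for $n$ even --- would have to be produced by step (b), the determination of which of $+\id$, $-\id$ is attainable above each $b$. That attainability analysis is therefore not a side remark but the whole difficulty, and the proposal gives no indication of how to carry it out (nor why the answer should depend only on $n\bmod 4$ and not on finer features of $b$). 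This is why the recursive route should be the one you actually write up, and why it needs the explicit case analysis above rather than an appeal to analogy with the field case.
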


Le lemme chinois, le théorème \ref{25}, le théorème \ref{36} et le résultat ci-dessus permettent d'obtenir des formules pour $w^{\pm}_{n,N}$ dans le cas où $N=4m$ avec $m$ sans facteur carré. Plus précisément on a :

\begin{corollary}
\label{261}

Soit $N=p_{1}\ldots p_{r}$ avec les $p_{i}$ des nombres premiers impairs deux à deux distincts. Soit $n \geq 2$, on a :
\[w_{n,N}^{\pm}=u_{n,p_{1}}^{\pm}u_{n,p_{2}}^{\pm}\ldots u_{n,p_{r}}^{\pm},~~~~~~w_{n,4N}^{\pm}=w_{n,4}^{\pm}u_{n,p_{1}}^{\pm}u_{n,p_{2}}^{\pm}\ldots u_{n,p_{r}}^{\pm}.\]

\end{corollary}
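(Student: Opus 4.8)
The plan is to deduce Corollary \ref{261} from the Chinese Remainder Theorem applied to the matrix equation \eqref{p}. First I would observe that for $N=p_1\cdots p_r$ with the $p_i$ distinct odd primes, the ring isomorphism $\mathbb{Z}/N\mathbb{Z}\simeq\prod_{i=1}^r\mathbb{Z}/p_i\mathbb{Z}$ induces, entry by entry, an isomorphism of groups $\mathrm{SL}_2(\mathbb{Z}/N\mathbb{Z})\simeq\prod_{i=1}^r\mathrm{SL}_2(\mathbb{F}_{p_i})$, under which $M_n(a_1,\ldots,a_n)$ computed over $\mathbb{Z}/N\mathbb{Z}$ corresponds to the tuple of matrices $M_n(a_1^{(i)},\ldots,a_n^{(i)})$ computed over each $\mathbb{F}_{p_i}$, where $a_j^{(i)}$ is the reduction of $a_j$ modulo $p_i$ — this is immediate since the reduction maps are ring homomorphisms and $M_n$ is a polynomial expression in the entries. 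Moreover the identity matrix $\mathrm{Id}$ over $\mathbb{Z}/N\mathbb{Z}$ corresponds to $(\mathrm{Id},\ldots,\mathrm{Id})$ and $-\mathrm{Id}$ to $(-\mathrm{Id},\ldots,-\mathrm{Id})$, since the $p_i$ are odd so $\pm\mathrm{Id}$ are distinct and behave coordinatewise. Hence the map $(a_1,\ldots,a_n)\mapsto((a_1^{(i)},\ldots,a_n^{(i)}))_{1\le i\le r}$ is a bijection from the set of $\lambda$-quiddities of size $n$ over $\mathbb{Z}/N\mathbb{Z}$ equal to $\pm\mathrm{Id}$ onto the product over $i$ of the corresponding sets over $\mathbb{F}_{p_i}$, giving $w_{n,N}^{\pm}=\prod_{i=1}^r u_{n,p_i}^{\pm}$.

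For the second formula, with $4N=4p_1\cdots p_r$, I would use that $4$ and $N$ are coprime (as the $p_i$ are odd), so $\mathbb{Z}/4N\mathbb{Z}\simeq\mathbb{Z}/4\mathbb{Z}\times\prod_{i=1}^r\mathbb{Z}/p_i\mathbb{Z}$, and the same functoriality argument yields a bijection between the solution set of $M_n(a_1,\ldots,a_n)=\pm\mathrm{Id}$ over $\mathbb{Z}/4N\mathbb{Z}$ and the product of the solution set over $\mathbb{Z}/4\mathbb{Z}$ with the solution sets over the $\mathbb{F}_{p_i}$. One subtlety to handle explicitly: over $\mathbb{Z}/4\mathbb{Z}$ we still have $-\mathrm{Id}\ne\mathrm{Id}$, so the coordinatewise identification of $\pm\mathrm{Id}$ with $(\pm\mathrm{Id},\ldots,\pm\mathrm{Id})$ remains valid, and counting in the $\mathbb{Z}/4\mathbb{Z}$-coordinate is recorded by $w_{n,4}^{\pm}$. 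This gives $w_{n,4N}^{\pm}=w_{n,4}^{\pm}\prod_{i=1}^r u_{n,p_i}^{\pm}$.

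There is no real obstacle here; the statement is essentially a bookkeeping consequence of the Chinese Remainder Theorem once one checks the compatibility of the sign $\pm\mathrm{Id}$ with the coordinate decomposition. The only point requiring a line of care is that the correspondence respects the choice of sign \emph{globally}: a tuple over $\mathbb{Z}/4N\mathbb{Z}$ solving $M_n=+\mathrm{Id}$ must map to tuples solving $M_n=+\mathrm{Id}$ in \emph{every} coordinate (and likewise for $-\mathrm{Id}$), which is exactly why the formula factors as a product of the like-signed counts $u_{n,p_i}^{+}$ (resp.\ $u_{n,p_i}^{-}$) rather than mixing signs across factors; this uses that reduction mod $p_i$ sends $+\mathrm{Id}\mapsto+\mathrm{Id}$ and $-\mathrm{Id}\mapsto-\mathrm{Id}$ with these being distinct in each $\mathrm{SL}_2(\mathbb{F}_{p_i})$ and in $\mathrm{SL}_2(\mathbb{Z}/4\mathbb{Z})$. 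The hypothesis $n\ge 2$ is only needed so that the counts on the right-hand side are the ones to which Theorems \ref{25}, \ref{36} and \ref{26} apply, but the bijection itself holds for all $n\ge 1$.
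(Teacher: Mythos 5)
Your proposal is correct and follows essentially the same route as the paper: the paper's Proposition \ref{414} establishes exactly the Chinese-remainder bijections $\tau$ and $\delta$ on the solution sets of $M_n=\id$ and $M_n=-\id$ (using that the entries of $M_n(a_1,\ldots,a_n)$ are polynomials in the $a_i$), and the corollary is then obtained by identifying the factor counts with $u_{n,p_i}^{\pm}$ and $w_{n,4}^{\pm}$. Your explicit remark that the sign must be respected globally in every coordinate (so that the product is over like-signed counts, not over unions) is a point the paper leaves implicit, and it is handled correctly.
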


\noindent Notons qu'une formule générale de récurrence pour $w^{\pm}_{n,N}$ dépasse le cadre de cet article.

On s'intéressera pour finir aux solutions irréductibles (voir définition \ref{24}) de \eqref{p}, en essayant d'obtenir quelques  informations sur la suite $(v_{N})$, le nombre total de classes d'équivalence de solutions irréductibles de \eqref{p} sur $\bZ/N\bZ$:

\begin{theorem}
\label{27}
i) $\left(\frac{v_{N}}{N\sqrt{{\rm ln}(N)}}\right)$ n'est pas bornée.
\\ii) $\forall l \in \mathbb{R}^{+}$, $(v_{N}-lN)$ n'est pas bornée.

\end{theorem}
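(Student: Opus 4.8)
The plan is to establish both parts at once by producing, along the sequence of primorials $N_{r}=p_{1}p_{2}\cdots p_{r}$, a family of pairwise inequivalent irreducible $\lambda$-quiddities that is much larger than $N_{r}\sqrt{\ln N_{r}}$, all of the fixed size $4$. \textbf{Step 1: classify the $\lambda$-quiddities of size $4$.} Over any commutative ring $A$, expanding $M_{4}(a,b,c,d)$ and cancelling the two off-diagonal coefficients shows that $M_{4}(a,b,c,d)=\pm\id$ is equivalent to $(a,b,c,d)\in\{(a,b,-a,-b)\mid ab=0\}$ (which gives $\id$) or $(a,b,c,d)\in\{(a,b,a,b)\mid ab=2\}$ (which gives $-\id$). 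This is elementary, and is in any case implicit in the description of small irreducible solutions over $\bZ/N\bZ$ in \cite{M1,M5}.

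\textbf{Step 2: cut out the irreducible ones.} Because the operation underlying the notion of reducibility takes sizes $k$ and $l$ to $k+l-2$, a reducible solution of size $4$ must be a sum of two solutions of size $3$; and over any $\bZ/N\bZ$ the only solutions of size $3$ are $(1,1,1)$ and $(-1,-1,-1)$ (the off-diagonal conditions force $a=c$, $b=a^{-1}$, and the diagonal then forces $-a=\pm 1$, i.e.\ $a=\pm 1$). Hence the reducible solutions of size $4$ lie in at most three equivalence classes, those of $(2,1,2,1)$, $(0,1,0,-1)$ and $(-2,-1,-2,-1)$. For $N\nmid 4$ the family $\{(a,b,-a,-b)\mid ab=0\}$ therefore contains only a bounded number of reducible tuples; since every equivalence class of a $4$-tuple has at most $8$ elements (cyclic rotations and reversal), this family yields at least $\frac18\,\bigl|\{(a,b)\in(\bZ/N\bZ)^{2}\mid ab=0\}\bigr|-O(1)$ distinct classes of irreducible solutions. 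Setting
\[
g(N):=\bigl|\{(a,b)\in(\bZ/N\bZ)^{2}\mid ab\equiv 0\}\bigr|=\sum_{a=0}^{N-1}\gcd(a,N)=\sum_{d\mid N}d\,\varphi(N/d),
\]
$g$ is multiplicative with $g(p)=2p-1$, so $g(N)=\prod_{p\mid N}(2p-1)$ when $N$ is squarefree, and we obtain $v_{N}\ge\frac18\prod_{p\mid N}(2p-1)-O(1)$ for all squarefree $N$.

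\textbf{Step 3: specialize and estimate.} Taking $N=N_{r}=p_{1}\cdots p_{r}$ and using $2p-1\ge\frac32p$ for every prime $p$, one gets $\prod_{p\mid N_{r}}(2p-1)\ge(3/2)^{r}N_{r}$, hence $v_{N_{r}}\ge c\,(3/2)^{r}N_{r}$ for some $c>0$ and all large $r$. By the prime number theorem $\ln N_{r}=\vartheta(p_{r})\sim p_{r}$ and $r=\pi(p_{r})\sim p_{r}/\ln p_{r}\sim\ln N_{r}/\ln\ln N_{r}$, so
\[
\frac{v_{N_{r}}}{N_{r}\sqrt{\ln N_{r}}}\ \ge\ \frac{c\,(3/2)^{r}}{\sqrt{\ln N_{r}}}\ =\ \exp\!\Bigl(\bigl(\ln(3/2)+o(1)\bigr)\tfrac{\ln N_{r}}{\ln\ln N_{r}}-\tfrac12\ln\ln N_{r}\Bigr)\longrightarrow\infty,
\]
which proves (i). Moreover $v_{N_{r}}/N_{r}\ge c\,(3/2)^{r}\to\infty$, so $v_{N_{r}}-lN_{r}=N_{r}(v_{N_{r}}/N_{r}-l)\to\infty$ for every $l\in\bR^{+}$, which proves (ii); one could also simply deduce (ii) from (i) since $\sqrt{\ln N}\to\infty$.

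The real content is the construction of the infinite family in Steps 1--2; the estimate in Step 3 is routine analytic number theory. The point demanding care is the irreducibility bookkeeping in Step 2: checking that only $O(1)$ of the $\prod_{p\mid N}(2p-1)$ tuples $(a,b,-a,-b)$ with $ab=0$ fall in a reducible class, and keeping track of the bounded multiplicity with which the dihedral action identifies distinct parameter pairs $(a,b)$. One should also recall the known finiteness of the set of irreducible $\lambda$-quiddities over $\bZ/N\bZ$ (see \cite{M1,M5}), so that the statement is not vacuous, although the lower bound itself does not use it.
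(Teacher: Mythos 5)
Your argument is correct, and it reaches the conclusion by a genuinely different construction than the paper. The paper also works inside the family of size-$4$ solutions $(\overline{a},\overline{b},\overline{-a},\overline{-b})$ with $\overline{ab}=\overline{0}$ and also uses the characterization ``réductible $\Leftrightarrow$ contient $\pm\overline{1}$'' (its Lemme \ref{52}, which you rederive correctly from Définition \ref{24}: for $n=4$ one is forced to $m=l=3$ and both summands must be $(\pm1,\pm1,\pm1)$). The difference is the choice of moduli: the paper takes $N=2^{2m}$ and exhibits by hand two explicit subfamilies (the pairs with $b=0$, and the pairs $(2^{m+k}a,2^{m-k}b)$ for $1\le k\le m-1$), yielding $v_N\gtrsim \frac{N\ln N}{32\ln 2}$, hence $v_N/(N\sqrt{\ln N})\gtrsim\sqrt{\ln N}$; you instead take primorials $N_r=p_1\cdots p_r$ and count the whole family at once via the multiplicative function $g(N)=\sum_{d\mid N}d\,\varphi(N/d)$, $g(p)=2p-1$, getting $v_{N_r}\ge \frac18(3/2)^rN_r-O(1)$, a much stronger lower bound along that subsequence. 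Your route trades explicit enumeration of orbits for the (routine) multiplicativity computation and a crude Chebyshev-type bound $\ln N_r=O(r\log r)$ (the full prime number theorem you invoke is not needed: $(3/2)^r$ beats any polynomial in $r$); the paper's route is entirely elementary but requires the careful case-by-case bookkeeping of which $4$-tuples are identified by the dihedral action. Both then obtain ii) from i) by the same one-line contraposition. Two cosmetic slips worth fixing: ``$N\nmid 4$'' should read ``$N>4$'' (the bound on the number of reducible tuples holds for all $N$ anyway), and in the size-$3$ computation the diagonal condition forces $-b=\pm1$ rather than $-a=\pm1$ (harmless, since the off-diagonal conditions already give $a=b^{-1}=b$).
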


Ce résultat est prouvé dans la section \ref{preuve27}. Dans cette dernière, on donnera également le nombre de solutions irréductibles de \eqref{p} sur $\bZ/N\bZ$ pour $N \leq 16$, obtenu grâce à un programme informatique, fourni dans l'annexe \ref{A}.

\section{Quelques éléments sur les frises}
\label{elements_frises}

\subsection{Quiddités}
\label{quid}
Le but poursuivi par cette section est d'énoncer formellement un certain nombre de définitions évoquées dans l'introduction.
À moins que des précisions supplémentaires ne soient apportées, $A$ est un anneau commutatif unitaire, $q$ désigne la puissance d'un nombre premier $p$, $\mathbb{F}_{q}$ est le corps fini à $q$ éléments. $N$ désigne un entier naturel supérieur à 2 et, si $a \in \bZ$, on note $\overline{a}:=a+N\bZ$. On commence par donner une définition précise du concept de $\lambda$-quiddité.

\begin{definition}[\cite{C}, définition 2.2]
\label{21}

Soit $n \in \bM$. On dit que le $n$-uplet $(a_{1},\ldots,a_{n})$ d'éléments de $A$ est une $\lambda$-quiddité sur $A$ de taille $n$ si $(a_{1},\ldots,a_{n})$ est une solution de \eqref{p}, c'est-à-dire si $M_{n}(a_{1},\ldots,a_{n})=\pm \id.$ En cas d'absence d'ambiguïté, on parlera simplement de $\lambda$-quiddité.

\end{definition}

\noindent Pour procéder à l'étude des $\lambda$-quiddités, on utilisera les deux définitions ci-dessous :

\begin{definition}[\cite{C}, lemme 2.7]
\label{22}

Soient $(n,m) \in (\bM)^{2}$, $(a_{1},\ldots,a_{n}) \in A^{n}$ et $(b_{1},\ldots,b_{m}) \in A^{m}$. On définit l'opération suivante: \[(a_{1},\ldots,a_{n}) \oplus (b_{1},\ldots,b_{m}):= (a_{1}+b_{m},a_{2},\ldots,a_{n-1},a_{n}+b_{1},b_{2},\ldots,b_{m-1}).\] Le $(n+m-2)$-uplet ainsi obtenu est appelé la somme de $(a_{1},\ldots,a_{n})$ avec $(b_{1},\ldots,b_{m})$.

\end{definition}

\begin{examples} 

{\rm On suppose $N \geq 8$ et on se place sur $\bZ/N\bZ$.} 
\begin{itemize}
\item $(\overline{1},\overline{2},\overline{3}) \oplus (\overline{3},\overline{2},\overline{1}) = (\overline{2},\overline{2},\overline{6},\overline{2})$;
\item $(\overline{1},\overline{0},\overline{4}) \oplus (\overline{3},\overline{1},\overline{0},\overline{1}) = (\overline{2},\overline{0},\overline{7},\overline{1},\overline{0})$;
\item $(\overline{5},\overline{2},\overline{1},\overline{1},\overline{3}) \oplus (\overline{2},\overline{2},\overline{3},\overline{0}) = (\overline{5},\overline{2},\overline{1},\overline{1},\overline{5},\overline{2},\overline{3})$;
\item $n \geq 2$, $(\overline{a_{1}},\ldots,\overline{a_{n}}) \oplus (\overline{0},\overline{0}) = (\overline{0},\overline{0}) \oplus (\overline{a_{1}},\ldots,\overline{a_{n}})=(\overline{a_{1}},\ldots,\overline{a_{n}})$.
\end{itemize}

\end{examples}

L'opération $\oplus$ est très intéressante pour mener à bien l'étude des solutions de l'équation \eqref{p}. En effet, celle-ci possède la propriété suivante : si $(b_{1},\ldots,b_{m})$ est une $\lambda$-quiddité alors la somme $(a_{1},\ldots,a_{n}) \oplus (b_{1},\ldots,b_{m})$ est une $\lambda$-quiddité si et seulement si $(a_{1},\ldots,a_{n})$ est une $\lambda$-quiddité (voir \cite{C,WZ} et \cite{M1} proposition 3.7). Toutefois, il est important de noter que $\oplus$ n'est ni commutative ni associative (voir \cite{WZ} exemple 2.1).

\begin{definition}[\cite{C}, définition 2.5]
\label{23}

Soient $(a_{1},\ldots,a_{n})$ et $(b_{1},\ldots,b_{n})$ deux $n$-uplets d'éléments de $A$. On note $(a_{1},\ldots,a_{n}) \sim (b_{1},\ldots,b_{n})$ si $(b_{1},\ldots,b_{n})$ est obtenu par permutations circulaires de $(a_{1},\ldots,a_{n})$ ou de $(a_{n},\ldots,a_{1})$.

\end{definition}

On constate aisément que $\sim$ est une relation d'équivalence sur l'ensemble des $n$-uplets d'éléments de $A$. De plus, si un $n$-uplet d'éléments de $A$ est une $\lambda$-quiddité alors tout $n$-uplet d'éléments de $A$ qui lui est équivalent est aussi une $\lambda$-quiddité (voir \cite{C} proposition 2.6). Muni de ces deux définitions, on peut formuler la notion d'irréductibilité évoquée dans l'introduction.

\begin{definition}[\cite{C}, définition 2.9]
\label{24}

Une solution $(c_{1},\ldots,c_{n})$ avec $n \geq 3$ de \eqref{p} est dite réductible s'il existe une solution de \eqref{p} $(b_{1},\ldots,b_{l})$ et un $m$-uplet $(a_{1},\ldots,a_{m})$ d'éléments de $A$ tels que \begin{itemize}
\item $(c_{1},\ldots,c_{n}) \sim (a_{1},\ldots,a_{m}) \oplus (b_{1},\ldots,b_{l})$;
\item $m \geq 3$ et $l \geq 3$.
\end{itemize}
Une solution est dite irréductible si elle n'est pas réductible.

\end{definition}

\begin{remark} 

{\rm $(0,0)$ est toujours une $\lambda$-quiddité. En revanche, celle-ci n'est jamais considérée comme étant irréductible.}

\end{remark}

L'étude des solutions de \eqref{p} de petite taille permet de connaître exactement le nombre de $\lambda$-quiddités dans ces cas. On dispose notamment des résultats suivants (voir par exemple \cite{CH} exemple 2.7 ou \cite{M1} section 3.1 pour le détail des calculs) :

\begin{proposition}
i) \eqref{p} n'a pas de solution de taille 1.

\smallskip
ii) $(0,0)$ est l'unique solution de \eqref{p} de taille 2.

\smallskip
iii) $(1,1,1)$ et $(-1,-1,-1)$ sont les seules solutions de \eqref{p} de taille 3.

\smallskip
iv) Les solutions de \eqref{p} de taille 4 sont de la forme $(-a,b,a,-b)$ avec $ab=0$ et $(a,b,a,b)$ avec $ab=2$.

\end{proposition}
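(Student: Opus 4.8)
The plan is to classify by size $n \in \{1,2,3,4\}$ through an explicit computation of the product $M_n$ and an entry-by-entry reading of the matrix equation $M_n(a_1,\ldots,a_n) = \pm\id$. Writing $m(a) = \begin{pmatrix} a & -1 \\ 1 & 0 \end{pmatrix}$, I will build the products up with the recursion $M_n(a_1,\ldots,a_n) = m(a_n)\,M_{n-1}(a_1,\ldots,a_{n-1})$; since $A$ is commutative each entry is a polynomial in the $a_i$, so the four entries yield a small system to solve. The two smallest sizes are immediate. For $n=1$, $M_1 = m(a_1)$ has lower-left entry $1$, which can never match the $0$ of a diagonal matrix, so \eqref{p} has no solution of size $1$. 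For $n=2$ one computes $M_2 = \begin{pmatrix} a_1a_2 - 1 & -a_2 \\ a_1 & -1 \end{pmatrix}$; since $\pm\id$ is diagonal the two off-diagonal entries must vanish, forcing $a_1 = a_2 = 0$ irrespective of the sign, and then $M_2 = -\id$. Hence $(0,0)$ is the unique size-$2$ solution.

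For $n=3$, one further multiplication gives $M_3 = \begin{pmatrix} a_1a_2a_3 - a_1 - a_3 & 1 - a_2a_3 \\ a_1a_2 - 1 & -a_2 \end{pmatrix}$. The lower-right entry yields $-a_2 = \pm 1$, so $a_2 = \mp 1$ is a unit equal to its own inverse; vanishing of the other two off-diagonal entries then reads $a_2a_3 = 1$ and $a_1a_2 = 1$, forcing $a_1 = a_3 = a_2$, after which the upper-left entry checks out automatically. The sign $+$ gives $(-1,-1,-1)$ and the sign $-$ gives $(1,1,1)$, which are exactly the two asserted solutions.

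The size-$4$ case is the real (though still elementary) computation. One more product gives
\[ M_4 = \begin{pmatrix} a_1a_2a_3a_4 - a_1a_2 - a_1a_4 - a_3a_4 + 1 & a_2 + a_4 - a_2a_3a_4 \\ a_1a_2a_3 - a_1 - a_3 & 1 - a_2a_3 \end{pmatrix}. \]
The lower-right entry $1 - a_2a_3 = \pm 1$ splits the problem into two branches, $a_2a_3 = 0$ for the sign $+$ and $a_2a_3 = 2$ for the sign $-$. Substituting the relevant relation into the (now linear) lower-left and upper-right entries solves for $a_3$ and $a_4$ in terms of $a_1, a_2$: in the first branch one gets $a_3 = -a_1$, $a_4 = -a_2$ with the single remaining constraint $a_1a_2 = 0$, so the solution set is $\{(a_1,a_2,-a_1,-a_2) : a_1a_2 = 0\}$, which coincides with the stated family $(-a,b,a,-b)$ with $ab=0$; in the second branch one gets $a_3 = a_1$, $a_4 = a_2$ with $a_1a_2 = 2$, i.e. the family $(a,b,a,b)$ with $ab = 2$. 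In each branch the upper-left entry is then satisfied identically, so no extra condition survives and these two families exhaust the solutions. The main point needing care is the bookkeeping of the four polynomial identities and the verification that the upper-left equation adds nothing once the lower-right case and the two off-diagonal relations are imposed; I will also record that separating the signs $+$ and $-$ uses $2 \neq 0$, which holds for all rings relevant here (finite fields of odd characteristic and $\bZ/N\bZ$ with $4 \mid N$).
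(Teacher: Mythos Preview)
Your direct entry-by-entry computation is correct and complete. The paper itself does not prove this proposition; it simply states the result and refers to \cite{CH} exemple~2.7 and \cite{M1} section~3.1 for the details of the calculations, which are precisely the kind of explicit expansions of $M_n$ that you carry out. So your approach is in fact the standard one implied by the paper's citations.

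One small remark: your final comment that ``separating the signs $+$ and $-$ uses $2 \neq 0$'' is unnecessary and slightly misleading. The proposition is stated over an arbitrary commutative unitary ring $A$, and your case analysis on the sign of $\pm\id$ remains valid even when $2=0$: the two branches then simply coincide (since $\id=-\id$, $a_2a_3=0$ and $a_2a_3=2$ are the same condition, and the two families $(-a,b,a,-b)$ and $(a,b,a,b)$ with $ab=0=2$ collapse into one). Nothing in your argument actually requires $2$ to be a nonzero element, so you may safely drop that caveat.
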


On en déduit qu'il existe zéro $\lambda$-quiddité de taille 1 et une $\lambda$-quiddité de taille 2. Si $1 \neq -1$ dans $A$ alors \eqref{p} a deux solutions de taille 3 (si $1=-1$ alors il y en a une seule). De plus, si $A$ est un corps fini de cardinal $q$ et de caractéristique différente de 2 alors \eqref{p} a $3q-2$ solutions de taille 4 (et $2q-1$ s'il est de caractéristique 2). On souhaite ici obtenir plus d'informations, c'est-à-dire compter le nombre de $\lambda$-quiddités de taille fixée lorsque $A$ est un corps fini.

\subsection{Frises de Coxeter}
\label{cox}

L'objectif de cette section est de donner les quelques éléments sur les frises de Coxeter nécessaires à la formulation du théorème de S. Morier-Genoud, évoqué dans la section précédente. Cela nous permettra également de préciser certains éléments de contexte fournis dans l'introduction. On commence par la définition suivante :

\begin{definition}[\cite{Cox}]
\label{31}

Une frise de Coxeter est un tableau de nombres vérifiant les conditions suivantes :
\begin{itemize}[label=$\circ$]
\item  le nombre de lignes est fini;
\item chaque ligne est infinie à gauche et à droite;
\item la première et la dernière ligne ne contiennent que des 1;
\item deux lignes consécutives sont disposées avec un décalage;
\item lorsque quatre éléments $a, b, c, d$ sont disposés de la façon suivante :
$$
\begin{array}{ccc}
&b&\\
a&&d\\
&c&
\end{array},
$$
la règle unimodulaire est vérifiée, c'est-à-dire $ad-bc=1$.
\end{itemize}

\noindent Si $m$ est le nombre de lignes de la frise alors $n=m-2$ est la largeur de la frise.

\end{definition}

\noindent On définit maintenant une classe particulière de frises de Coxeter.

\begin{definition}[\cite{BR}]
\label{32}

On peut étendre une frise de Coxeter en rajoutant en bas (resp. en haut) du tableau une ligne constituée entièrement de 0 suivie (resp. précédée) d'une ligne constituée uniquement de $-1$ (en prolongeant une frise de cette manière, la règle unimodulaire est toujours satisfaite). Prolongée ainsi, une frise de Coxeter est dite docile si pour tout éléments $a, b, \ldots, i$ présents dans la frise et disposés de la façon suivante :
\begin{center}
$
\begin{array}{ccccc}
&&c\\
&b&&f\\
a&&e&&i\\
&d&&h\\
&&g
\end{array}
$
\quad\quad \text{on a} \quad\quad
$\left|
\begin{array}{cccccc}
a & b & c\\
d & e & f\\
g & h & i\\
\end{array}
\right|=0.
$
\end{center}
\end{definition}

\indent En utilisant la règle d'unimodalité rappelée ci-dessus, on peut, à partir de la deuxième ligne (c'est-à-dire la ligne en-dessous de celle ne contenant que des 1), reconstruire toutes les lignes de la frise. On dispose également du résultat ci-dessous :

\begin{theorem}[\cite{Cox}]
\label{33}

Les lignes d'une frise de Coxeter docile de largeur $n$ sont périodiques de période $n+3$.

\end{theorem}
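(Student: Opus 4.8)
The plan is to reduce the statement to a one-line computation with the $2\times 2$ matrices $\left(\begin{smallmatrix} a & -1 \\ 1 & 0 \end{smallmatrix}\right)$ that make up $M_n$, by first expressing every entry of the frieze as a continuant of a window of consecutive entries of its first nontrivial row. Fix coordinates so that, in the docile extension, row $0$ is the top row of $1$'s, row $1$ the first nontrivial row with entries $(c_i)_{i\in\mathbb{Z}}$, rows $1,\dots,n$ the interior rows, row $n+1$ the bottom row of $1$'s, row $n+2$ the row of $0$'s, and row $n+3$ the row of $-1$'s. Denote by $K_k(x_1,\dots,x_k)$ the continuant given by $K_0=1$, $K_1(x)=x$ and $K_k(x_1,\dots,x_k)=x_kK_{k-1}(x_1,\dots,x_{k-1})-K_{k-2}(x_1,\dots,x_{k-2})$; one checks that $M_k(a_1,\dots,a_k)=\left(\begin{smallmatrix} K_k(a_1,\dots,a_k) & -K_{k-1}(a_2,\dots,a_k)\\ K_{k-1}(a_1,\dots,a_{k-1}) & -K_{k-2}(a_2,\dots,a_{k-1})\end{smallmatrix}\right)$. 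The first step is to prove, by induction on $k$, that the entry of the extended frieze sitting in row $k$ over the column window $[i,i+k-1]$ equals $K_k(c_i,\dots,c_{i+k-1})$. The base cases $k=0,1$ are immediate, and the inductive step uses the unimodular rule $ad-bc=1$ together with the docility hypothesis (the vanishing $3\times 3$ determinant), which is precisely the integrability condition producing the three-term recurrence of the continuants with no division involved. I expect this translation of the pictorial and determinantal hypotheses into the algebraic recurrence to be the main obstacle; everything afterwards is formal. Alternatively one may quote the known dictionary between docile friezes of width $n$ and $(n+3)$-periodic solutions of $M_{n+3}=-\id$ (cf.\ \cite{BR} and \cite{CH}) and start from the next step.

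The second step reads this description off the four bottom rows of the extended frieze. Since row $n+1$ is constantly $1$, row $n+2$ constantly $0$ and row $n+3$ constantly $-1$, the continuant formula for $M_{n+3}(c_i,\dots,c_{i+n+2})$ gives top-left entry $K_{n+3}(c_i,\dots,c_{i+n+2})=-1$, bottom-left entry $K_{n+2}(c_i,\dots,c_{i+n+1})=0$, top-right entry $-K_{n+2}(c_{i+1},\dots,c_{i+n+2})=0$ and bottom-right entry $-K_{n+1}(c_{i+1},\dots,c_{i+n+1})=-1$, that is
\[ M_{n+3}(c_i,\dots,c_{i+n+2}) = -\id \qquad\text{for every } i\in\mathbb{Z}. \]

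The third step extracts the periodicity. Writing the identity above for $i$ and for $i+1$ and setting $P:=M_{n+2}(c_{i+1},\dots,c_{i+n+2})$, one gets $P\left(\begin{smallmatrix} c_i & -1\\ 1 & 0\end{smallmatrix}\right)=-\id$ and $\left(\begin{smallmatrix} c_{i+n+3} & -1\\ 1 & 0\end{smallmatrix}\right)P=-\id$; eliminating $P$ forces $\left(\begin{smallmatrix} c_{i+n+3} & -1\\ 1 & 0\end{smallmatrix}\right)=\left(\begin{smallmatrix} c_i & -1\\ 1 & 0\end{smallmatrix}\right)$, hence $c_{i+n+3}=c_i$ for all $i$: the first nontrivial row is $(n+3)$-periodic. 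By the first step the entry of row $k$ over $[i,i+k-1]$ is $K_k(c_i,\dots,c_{i+k-1})$, a function of a window of the now $(n+3)$-periodic sequence $(c_j)$, so every row of the frieze is $(n+3)$-periodic as well, which is the assertion; the rows of $1$'s, $0$'s and $-1$'s of the extension are periodic for trivial reasons.
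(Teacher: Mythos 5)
Your argument is correct. Note, however, that the paper offers no proof of this statement at all: it is quoted as a classical result of Coxeter (\cite{Cox}) and used as a black box, so there is nothing internal to compare against step by step. What you do is essentially the standard modern proof, and most of its ingredients are already recorded elsewhere in the paper: your steps 1 and 2 (entries as continuants $K_k(c_i,\dots,c_{i+k-1})$, and the reading-off of the bottom three rows of the docile extension to get $M_{n+3}(c_i,\dots,c_{i+n+2})=-\id$ for all $i$) constitute exactly the dictionary of Proposition \ref{34}, attributed to \cite{BR} and \cite{CH}, so you could legitimately shorten your write-up by invoking it, as you yourself suggest. The genuinely new content is your step 3: writing the relation for $i$ and $i+1$, setting $P=M_{n+2}(c_{i+1},\dots,c_{i+n+2})$, and eliminating $P$ (which is invertible since $PM_1(c_i)=-\id$) to force $M_1(c_{i+n+3})=M_1(c_i)$, hence $c_{i+n+3}=c_i$; periodicity of the remaining rows then follows from the continuant description. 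This is clean and complete. The only point deserving a little more care in a final version is the induction in step 1: you should make explicit that docility gives the three-term recurrence $v_j=c_jv_{j-1}-v_{j-2}$ along the diagonals (with the top rows of $1$'s and $0$'s as initial data), since the unimodular rule alone does not suffice over a general ring; but this is exactly the content of the cited Proposition \ref{34}, not a gap.
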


\begin{example}
{\rm Voici un exemple de frise de Coxeter de largeur 2 :}
$$
 \begin{array}{cccccccccccccccc}
\cdots&&1&& 1&&1&&1&&1
 \\[2pt]
&1&&3&&1&&2&&2&&\cdots
 \\[2pt]
\cdots&&2&&2&&1&&3&&1
 \\[2pt]
&1&& 1&&1&&1&&1&&\cdots
\end{array}
$$

\end{example}

Il existe un lien surprenant entre les frises de Coxeter à coefficients entiers strictement positifs et les triangulations de polygones convexes. Pour détailler celui-ci, on a d'abord besoin de la définition suivante, introduite dans \cite{CoCo} :

\begin{definition}

On considère une triangulation d'un polygone convexe à $n$ sommets $P_{1}P_{2} \ldots P_{n}$. On appelle quiddité associée à la triangulation la séquence $(a_{1},\ldots,a_{n})$ où $a_{i}$ est égal au nombre de triangles utilisant le sommet $P_{i}$.

\end{definition}

\noindent On dispose du résultat suivant:

\begin{theorem}[Conway-Coxeter,~\cite{CoCo}]
Soit $n \geq 3$.

\smallskip
i) La quiddité $(a_{1},\ldots,a_{n})$ associée à la triangulation d'un polygone convexe à $n$ sommets détermine la deuxième ligne d'une frise de largeur $n-3$. 

\smallskip
ii) Tout $n$-uplet d'entiers strictement positifs $(a_{1},\ldots,a_{n})$ qui définit la deuxième ligne d'une frise de largeur $n-3$ est la quiddité associée à la triangulation d'un polygone convexe à $n$ sommets.

\end{theorem}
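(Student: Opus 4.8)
The plan is to reduce both assertions to a single correspondence: a tuple $(a_1,\ldots,a_n)$ of positive integers is the quiddity of a triangulation of a convex $n$-gon if and only if it solves $M_n(a_1,\ldots,a_n)=-\id$. By the correspondence recalled in the introduction (\cite{BR} and \cite{CH}, proposition 2.4), the positive-integer solutions of $M_n=-\id$ are exactly the second rows of the closed Coxeter friezes of width $n-3$; so establishing this equivalence yields both i) (a quiddity solves $M_n=-\id$, hence is such a second row) and ii) (a positive-integer second row of a width $n-3$ frieze solves $M_n=-\id$, hence is a triangulation quiddity). Note that definition \ref{31} asks only for an array of \emph{numbers} closing up with rows of $1$'s, so positivity and integrality of the interior entries are not part of what must be proved. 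The whole argument runs by induction on $n$, the inductive step being the removal/insertion of an \emph{ear}, matched with the operation $\oplus(1,1,1)$.

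For i), the base case $n=3$ is the triangle, whose quiddity is $(1,1,1)$ with $M_3(1,1,1)=-\id$ by the small-size computation recalled in section \ref{quid}. For $n\geq 4$, every triangulation of a convex $n$-gon has an ear, i.e. a vertex $P_k$ lying in exactly one triangle, so $a_k=1$ (the dual tree of the triangulation has at least two leaves). Deleting the triangle at $P_k$ removes $P_k$ and lowers the two neighbouring counts $a_{k-1},a_{k+1}$ by one, producing the quiddity $q'$ of a triangulation of an $(n-1)$-gon. A direct reading of definition \ref{22} shows that, up to the equivalence $\sim$, one has $q=q'\oplus(1,1,1)$: reinserting the ear increases two adjacent entries by one and inserts a $1$ between them. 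Since $(1,1,1)$ is a $\lambda$-quiddity and, by induction, so is $q'$, the stability of $\oplus$ recalled after definition \ref{22} shows $q$ is a $\lambda$-quiddity; and tracking the sign in the matrix identity underlying $\oplus$ (two summands equal to $-\id$ again yielding $-\id$, as one checks on $(1,1,1)\oplus(1,1,1)=(2,1,2,1)$) gives $M_n(q)=-\id$.

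For ii), let $(a_1,\ldots,a_n)$ be a positive-integer second row of a width $n-3$ frieze, equivalently a positive-integer solution of $M_n=-\id$. The crucial step is to produce an index $k$ with $a_k=1$. I would read off, along a descending diagonal joining a top $1$ to a bottom $1$, the continuant sequence $d_0=1$, $d_1=a_k$, $d_{j+1}=a_{k+j}d_j-d_{j-1}$ furnished by the unimodular rule. If every $a_i\geq 2$, then $d_1-d_0=a_k-1>0$ and an immediate induction gives $d_{j+1}-d_j\geq d_j-d_{j-1}>0$, so the diagonal strictly increases and cannot return to the value $1$ on the bottom row of a frieze that closes up after finitely many rows (theorem \ref{33}); this contradiction forces some $a_k=1$. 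One also checks that two such entries cannot be adjacent when $n\geq 4$, since consecutive $1$'s would create a $0$ in the third row, incompatible with the frieze closing up at width $n-3\geq 1$; hence $a_{k\pm 1}\geq 2$. Deleting $a_k$ and subtracting $1$ from each neighbour gives a positive-integer $(n-1)$-tuple $q'$ with $q=q'\oplus(1,1,1)$, and running the sign identity of $\oplus$ in reverse gives $M_{n-1}(q')=-\id$. By induction $q'$ is the quiddity of a triangulation $T'$ of an $(n-1)$-gon; reinserting a vertex on the edge joining the images of $P_{k-1}$ and $P_{k+1}$, together with the corresponding triangle, produces a triangulation of the $n$-gon with quiddity $q$.

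The main obstacle is the crux of ii), the existence of a $1$ in the second row, together with the bookkeeping that makes the geometric ear-surgery coincide exactly with $\oplus(1,1,1)$ and with the correct sign. The continuant growth argument settles the first, and the second is a careful but routine check of definition \ref{22} and of the sign rule for $M_{n+m-2}$ of a sum, which I would state once and reuse in both directions. The positivity and integrality of the remaining frieze entries, although not needed for the statement as phrased, follow a posteriori from the combinatorial interpretation of the entries as counts attached to the triangulation.
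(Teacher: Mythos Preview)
The paper does not supply its own proof of this statement: the theorem is quoted as a classical result of Conway and Coxeter, with a bare citation to \cite{CoCo}, and the text moves directly to the next item. There is therefore nothing in the paper to compare your proposal against.

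For what it is worth, your outline is the standard inductive argument via ear removal/insertion and is essentially sound. Two small points deserve tightening. First, you identify ``second row of a frieze of width $n-3$'' with ``positive-integer solution of $M_n=-\id$'' via Proposition~\ref{34}, but that proposition, as stated in the paper, concerns \emph{docile} friezes; you should remark that a frieze whose second row consists of positive integers has all interior entries positive integers (an easy induction from the unimodular rule once one knows the frieze closes up), hence no $3\times 3$ diamond has a vanishing entry and the frieze is automatically docile. Second, your exclusion of two adjacent $1$'s in part ii) relies on the interior entries being nonzero, which is exactly this same observation; once it is stated, both uses are covered. With those clarifications your induction goes through.
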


Comme indiqué dans l'introduction, les frises de Coxeter sont reliées à l'étude de l'équation \eqref{p} par l'intermédiaire du résultat qui suit (voir \cite{BR} et \cite{CH} proposition 2.4).

\begin{proposition}
\label{34}

Soit $A$ un anneau commutatif unitaire. Le $n$-uplet $(a_{1},\ldots,a_{n})$ d'éléments de $A$ détermine la deuxième ligne d'une frise docile de largeur $n-3$ si et seulement si $M_{n}(a_{1},\ldots,a_{n})=-\id$. 

\end{proposition}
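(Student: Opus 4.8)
The plan is to set up the standard dictionary between docile friezes and periodic vector sequences. For $V,W\in A^{2}$ write $\langle V,W\rangle:=\det(V\mid W)$, and recall the Plücker identity $\langle V_{1},V_{2}\rangle\langle V_{3},V_{4}\rangle-\langle V_{1},V_{3}\rangle\langle V_{2},V_{4}\rangle+\langle V_{1},V_{4}\rangle\langle V_{2},V_{3}\rangle=0$, valid for any four vectors over any commutative ring. The claim to establish is that a docile frieze of width $n-3$ with second row $(a_{1},\ldots,a_{n})$ is the same datum as a sequence $(V_{i})_{i\in\bZ}$ of vectors of $A^{2}$ with (a) $\langle V_{i},V_{i+1}\rangle=1$, (b) $V_{i+1}=a_{i}V_{i}-V_{i-1}$ for all $i$ (indices of the $a$'s taken mod $n$), and (c) $V_{i+n}=-V_{i}$; the frieze is then $F(i,j):=\langle V_{i},V_{j}\rangle$. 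Indeed $F(i,i\pm1)=\pm1$ are its two rows of $1$'s, $F(i,i+2)=\langle V_{i},a_{i+1}V_{i+1}-V_{i}\rangle=a_{i+1}$ is the second row (one period being $(a_{1},\ldots,a_{n})$), $F(i,i+n-1)=\langle V_{i},-V_{i-1}\rangle=1$ is the bottom row of $1$'s, and $F(i,i)=F(i,i+n)=0$, $F(i,i-1)=F(i,i+n+1)=-1$ supply the rows of $0$'s and $-1$'s of the docile extension; the unimodular rule at a diamond is the Plücker identity applied to its four incident diagonals (the right-hand side reducing to $\langle V_{i},V_{i+1}\rangle\langle V_{k},V_{k+1}\rangle=1$), and the $3\times3$ docility determinant vanishes because the block $\bigl(F(i_{0}+a,j_{0}+b)\bigr)_{0\le a,b\le 2}$ equals $U^{T}\left(\begin{smallmatrix}0&1\\-1&0\end{smallmatrix}\right)W$ with $U=(V_{i_{0}}\mid V_{i_{0}+1}\mid V_{i_{0}+2})$ and $W=(V_{j_{0}}\mid V_{j_{0}+1}\mid V_{j_{0}+2})$, a matrix that factors through $A^{2}$ and hence has zero determinant.

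Granting this dictionary, the statement is equivalent to: a sequence satisfying (a)--(c) exists if and only if $M_{n}(a_{1},\ldots,a_{n})=-\id$. Writing (b) as $(V_{i+1}\mid V_{i})=(V_{i}\mid V_{i-1})\left(\begin{smallmatrix}a_{i}&1\\-1&0\end{smallmatrix}\right)$ and using $\left(\begin{smallmatrix}a&1\\-1&0\end{smallmatrix}\right)=\left(\begin{smallmatrix}a&-1\\1&0\end{smallmatrix}\right)^{T}$, iterating over $i=1,\ldots,n$ gives $(V_{n+1}\mid V_{n})=(V_{1}\mid V_{0})\,M_{n}(a_{1},\ldots,a_{n})^{T}$; since (a) makes $(V_{1}\mid V_{0})$ invertible, condition (c) (which, given (b), need only be checked for two consecutive values of $i$) is equivalent to $M_{n}(a_{1},\ldots,a_{n})^{T}=-\id$, i.e. to $M_{n}(a_{1},\ldots,a_{n})=-\id$. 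For the ``if'' direction one still has to produce a sequence: set $V_{0}=\binom{1}{0}$, $V_{1}=\binom{0}{1}$ and define the $V_{i}$, $i\in\bZ$, by the recurrence (b); then (a) follows by a one-line induction and (c) by the computation just done, so the array $F$ above is the desired frieze.

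The delicate point — the one I expect to be the real work — is the ``$\Rightarrow$'' half of the dictionary: reconstructing the vectors $V_{i}$ from an arbitrary docile frieze of width $n-3$ with second row $(a_{1},\ldots,a_{n})$. After an $\SL_{2}(A)$ change of basis one may take two chosen consecutive diagonals to be $\binom{1}{0}$ and $\binom{0}{1}$; then the entries of the two rows of $1$'s and of the second row already force every other diagonal to be $V_{i+1}=a_{i}V_{i}-V_{i-1}$ (the unique vector pairing to $a_{i}$ with $V_{i-1}$ and to $1$ with $V_{i}$), the docility hypothesis — the vanishing of the $3\times3$ minors — then forces $F(i',j)=\langle V_{i'},V_{j}\rangle$ for \emph{all} $i',j$, and the rows of $0$'s and $-1$'s of the docile extension finally pin down $V_{i+n}=-V_{i}$, whence $M_{n}(a_{1},\ldots,a_{n})=-\id$ by the identity of the preceding paragraph. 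The one genuinely technical spot is the use of docility to propagate $F(i',j)=\langle V_{i'},V_{j}\rangle$ in a division-free way, so that the argument is valid over an arbitrary commutative ring $A$ and not merely over a field; this can be organized via the universal frieze over $\bZ[a_{1},\ldots,a_{n}]$ and specialization, or extracted from the reconstruction theorems of \cite{BR}. (The assertion is \cite{CH}, Proposition~2.4.)
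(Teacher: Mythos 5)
The paper does not prove this proposition at all: it is quoted from \cite{BR} and \cite{CH} (proposition 2.4), so there is no internal argument to compare yours against. Your reconstruction of the standard proof is essentially sound. The dictionary between docile friezes and sequences $(V_i)$ with $F(i,j)=\langle V_i,V_j\rangle$, the Plücker identity accounting for the unimodular rule, the factorisation $U^{T}\left(\begin{smallmatrix}0&1\\-1&0\end{smallmatrix}\right)W$ accounting for the vanishing $3\times3$ determinants, and the matrix identity $(V_{n+1}\mid V_{n})=(V_{1}\mid V_{0})\,M_{n}(a_{1},\ldots,a_{n})^{T}$ are all correct, and the \og si \fg{} direction is complete as written. (One terminological slip: $F(i,i-1)=-1$ belongs to the row of $-1$ of the docile extension, not to a \og row of $1$'s\fg; the two rows of $1$'s are $F(i,i+1)$ and $F(i,i+n-1)$, which you do treat correctly a few lines later.)

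The only substantive point left open is the one you flag yourself: in the \og seulement si \fg{} direction, deducing $F(i',j)=\langle V_{i'},V_{j}\rangle$ for all $i',j$ from docility, over an arbitrary commutative ring. This closes division-free as follows. In any $3\times3$ block $\bigl(F(i_{0}+a,j_{0}+b)\bigr)_{0\le a,b\le2}$ of the extended frieze, the upper-left $2\times2$ minor equals $1$ by the unimodular rule, so there are unique $\lambda,\mu\in A$ with $c_{2}=\lambda c_{1}+\mu c_{0}$ on the first two rows (where $c_{b}$ denotes the columns of the block); then $0=\det\bigl(c_{0},c_{1},c_{2}-\lambda c_{1}-\mu c_{0}\bigr)$ equals the third entry of $c_{2}-\lambda c_{1}-\mu c_{0}$ times that unit minor, so the same linear relation holds on the third row, and by the same uniqueness argument the coefficients $\lambda,\mu$ do not depend on $i_{0}$. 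Reading them off from the rows $F(j,j)=0$, $F(j,j+1)=1$, $F(j,j+2)=a_{j+1}$, $F(j+1,j)=-1$ gives exactly the three-term recurrence defining your $V_{j}$, after which your computation $\langle V_{i},V_{i+n}\rangle=0$, $\langle V_{i+1},V_{i+n}\rangle=1$ forces $V_{i+n}=-V_{i}$ and hence $M_{n}(a_{1},\ldots,a_{n})^{T}=-\id$. So the proposal is correct modulo writing out this propagation lemma explicitly, which is precisely where \cite{BR} and \cite{CH} put the work; deferring it to those references is no worse than what the paper itself does.
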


\begin{example}
{\rm Si on reprend la frise de l'exemple précédent, on a $M_{5}(1,3,1,2,2)=-\id$.
}
\end{example}

On souhaite maintenant compter le nombre de frises de Coxeter dociles dont les éléments appartiennent à un corps fini. Grâce à la proposition précédente, cela équivaut à calculer $u_{n,q}^{-}$.

\begin{theorem}[Morier-Genoud,~\cite{Mo2} Théorème 1]
\label{36}

Soient $q$ la puissance d'un nombre premier $p$ et $n\in\mathbb{N}$, $n>4$.

\smallskip
i) Si $n$ est impair alors $u_{n, q}^{-}=\left[\frac{n-1}{2}\right]_{q^{2}}$.

\smallskip
ii) Si $n$ est pair alors il existe $m \in \bM$ tel que $n=2m$.
\begin{itemize}

\item Si $p=2$, $u_{n,q}^{-}=(q-1)\binom{m}{2}_{q}+q^{m-1}$.
\item Si $p>2$ et $m$ pair on a : $u_{n,q}^{-}=(q-1)\binom{m}{2}_{q}$.
\item Si $p>2$ et $m \geq 3$ impair on a : $u_{n,q}^{-}=(q-1)\binom{m}{2}_{q}+q^{m-1}$.

\end{itemize}

\end{theorem}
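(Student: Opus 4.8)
The plan is to recast $M_n(a_1,\dots,a_n)=\pm\id$ as a statement about closed sequences of vectors in $\mathbb{F}_q^2$, and then to reduce both $u_{n,q}^{+}$ and $u_{n,q}^{-}$ to one short coupled recursion. First I would set up the geometric model. To $(a_1,\dots,a_n)\in\mathbb{F}_q^n$ attach the linear recurrence $e_{i+1}=a_ie_i-e_{i-1}$; since $\left(\begin{smallmatrix}e_{i+1}\\ e_i\end{smallmatrix}\right)=\left(\begin{smallmatrix}a_i&-1\\1&0\end{smallmatrix}\right)\left(\begin{smallmatrix}e_i\\ e_{i-1}\end{smallmatrix}\right)$, the equality $M_n(a_1,\dots,a_n)=\varepsilon\,\id$ (with $\varepsilon\in\{1,-1\}$) holds if and only if every solution satisfies $e_{i+n}=\varepsilon e_i$. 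Taking a basis of the two-dimensional solution space whose Wronskian $e_ie'_{i+1}-e_{i+1}e'_i$ (constant in $i$) equals $1$ and setting $V_i:=(e_i,e'_i)\in\mathbb{F}_q^2$, one gets a sequence with $\det(V_i,V_{i+1})=1$, $V_{i+n}=\varepsilon V_i$ and $a_i=\det(V_{i-1},V_{i+1})$; the ambiguity is exactly a free action of $\SL_2(\mathbb{F}_q)$, under which the $a_i$ are invariant. Hence, putting
\[ C_n(\mu):=\bigl|\{(V_1,\dots,V_n)\in(\mathbb{F}_q^2)^n \mid \det(V_i,V_{i+1})=1 \text{ for } 1\le i\le n-1,\ \det(V_n,V_1)=\mu\}\bigr|, \]
one has $u_{n,q}^{+}=C_n(1)/(q^3-q)$ and $u_{n,q}^{-}=C_n(-1)/(q^3-q)$, the value $-1$ being forced by $\det(V_n,V_{n+1})=\det(V_n,-V_1)=-\det(V_n,V_1)$ in the antiperiodic case.

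Next I would derive a recursion for $C_n$. One builds a tuple counted by $C_n(\mu)$ by first picking an open unimodular path $V_1,\dots,V_{n-1}$ ($V_1$ any nonzero vector, then each $V_i$ on the affine line $\{\det(V_{i-1},\cdot)=1\}$: there are $(q^2-1)q^{n-2}$ of these) and then the last vertex $V_n$, constrained by $\det(V_{n-1},V_n)=1$ and $\det(V_n,V_1)=\mu$. These two linear conditions have a unique solution when $V_1$ and $V_{n-1}$ are linearly independent, and otherwise either none or a full line of them (the latter precisely when $V_{n-1}$ is the correct scalar multiple of $V_1$). The number $P_k$ of open unimodular $k$-paths whose extreme vertices are parallel satisfies $P_k=(q^2-1)q^{k-2}-P_{k-1}$ (delete the last vertex), so $P_k=q(q-1)\bigl(q^{k-2}-(-1)^k\bigr)$, while the number of open $(n-1)$-paths ending at the required multiple of $V_1$ is itself $C_{n-2}(-\mu)$. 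Assembling these pieces gives, for $n\ge3$ and $\mu\in\{1,-1\}$,
\[ C_n(\mu)=\bigl((q^2-1)q^{n-2}-P_{n-1}\bigr)+q\,C_{n-2}(-\mu). \]

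Finally I would solve it. Dividing by $q^3-q$ gives $u_{n,q}^{\pm}=G_n+q\,u_{n-2,q}^{\mp}$ with $G_n=\bigl(q^{n-2}-(-1)^n\bigr)/(q+1)\in\bZ$. Subtracting the two relations yields $u_{n,q}^{+}-u_{n,q}^{-}=-q\bigl(u_{n-2,q}^{+}-u_{n-2,q}^{-}\bigr)$, hence $u_{n,q}^{+}=u_{n,q}^{-}$ for odd $n$ and $u_{n,q}^{+}-u_{n,q}^{-}=(-1)^mq^{m-1}$ for $n=2m$ (base cases $u_{2,q}^{+}=0$, $u_{2,q}^{-}=1$); adding them, $T_n:=u_{n,q}^{+}+u_{n,q}^{-}$ solves $T_n=2G_n+qT_{n-2}$. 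For odd $n$ this gives $u_{n,q}^{-}=G_n+q\,u_{n-2,q}^{-}$, and an induction from $u_{3,q}^{-}=1=\left[1\right]_{q^2}$ using elementary $q$-binomial identities yields $u_{n,q}^{-}=\left[\tfrac{n-1}{2}\right]_{q^2}$. For $n=2m$, solving $T_n=2G_n+qT_{n-2}$ from $T_2=1$ gives $T_{2m}=2(q-1)\binom{m}{2}_q+q^{m-1}$ (here the recursion $T_{2m}-qT_{2m-2}=2G_{2m}$ is exactly the identity recorded in the introduction), so that $u_{n,q}^{-}=\tfrac12\bigl(T_n-(-1)^mq^{m-1}\bigr)$ equals $(q-1)\binom{m}{2}_q$ for $m$ even and $(q-1)\binom{m}{2}_q+q^{m-1}$ for $m$ odd. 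In characteristic $2$, $-1=1$ forces $C_n(1)=C_n(-1)$, hence $u_{n,q}^{+}=u_{n,q}^{-}$, and the single recursion $u_{n,q}^{-}=G_n+q\,u_{n-2,q}^{-}$ (now valid for all $n$) produces $(q-1)\binom{m}{2}_q+q^{m-1}$ there; the base values $u_{2,q}^{\pm},u_{3,q}^{\pm}$ come from the classification of small solutions recalled above.

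I expect the real work to sit in two places. The conceptual one is the recursion of the second step: arranging the vector model so that the "closing vertex" analysis is clean, correctly enumerating the degenerate configurations with parallel extreme vertices, and recognizing the auxiliary open-path counts as instances of $C_{n-2}$ while tracking the sign of $\mu$ throughout. The computational one is the third step: solving the inhomogeneous recursion for $T_n$ and reshaping the resulting polynomial in $q$ into Gaussian-binomial form, separating the two parities of $m$ and the characteristic-$2$ case. Verifying that the $\SL_2(\mathbb{F}_q)$-action is free and that the bijection of the first step is unaffected by degenerate vector configurations (three collinear vertices, non-consecutive parallel vertices) is routine but should be done with care.
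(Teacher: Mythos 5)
Your argument is correct — I checked the setup (the free $\SL_2(\mathbb{F}_q)$-action on unimodular vector cycles, giving $u_{n,q}^{\pm}=C_n(\pm1)/(q^3-q)$), the closing-vertex analysis (unique $V_n$ when $V_1,V_{n-1}$ are independent, a full affine line of $q$ choices exactly when $V_{n-1}=-\mu V_1$, and the identification of those degenerate open paths with $C_{n-2}(-\mu)$), the auxiliary count $P_k=q(q-1)(q^{k-2}-(-1)^k)$, and the resulting recursion $u_{n,q}^{\mu}=G_n+q\,u_{n-2,q}^{-\mu}$ with $G_n=(q^{n-2}-(-1)^n)/(q+1)$; this reproduces the correct small values ($u_{3,q}^{-}=1$, $u_{4,q}^{-}=q-1$, $u_{4,q}^{+}=2q-1$) and the difference/sum decoupling closes the induction via the identity $\frac{q^{2m-2}-1}{q^2-1}+q\binom{m-1}{2}_q=\binom{m}{2}_q$ recorded in the introduction. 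This is, however, a genuinely different route from the paper's: the paper does not prove Theorem \ref{36} at all — it imports it from Morier-Genoud and only observes that it can be recovered from the four-term recursion of Theorem \ref{thm412} (itself proved by surgery on entries via Lemme \ref{form}) together with the small cases, or, in Appendix \ref{ann_preuve25}, from a bijection with $\mathrm{PGL}_2$-orbits of tuples of \emph{lines}, where the non-free orbits of type $(d_1,d_2,d_1,d_2,\ldots)$ must be counted separately and the characteristic-$2$ case is excluded. Your version works at the level of vectors, where the $\SL_2$-action is free, so no exceptional orbits appear; it yields a two-term coupled recursion (the vector-level analogue of Lemme \ref{46}) that treats $u^{+}$ and $u^{-}$ and all characteristics uniformly, and it delivers Theorems \ref{25} and \ref{36} simultaneously and self-containedly, at the price of redoing the dictionary between $\lambda$-quiddities and closed unimodular vector sequences that the paper simply borrows. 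The only points I would insist you write out in full are the ones you already flag as routine: the well-definedness of the cyclic $a_i:=\det(V_{i-1},V_{i+1})$ at the wrap-around indices (with the convention $V_0=\mu V_n$, $V_{n+1}=\mu V_1$) and the surjectivity of the orbit map onto the solution set.
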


\begin{remark}
{\rm À l'origine, le résultat donné dans \cite{Mo2} concerne uniquement le nombre de frises.
}
\end{remark}

Pour de nombreux autres éléments sur les frises et leurs applications, on peut consulter l'article \cite{Mo1} qui donne une présentation assez complète du sujet.

\subsection{Résultats préliminaires}
\label{chap41}

Le but de cette sous-section est de fournir un certain nombre de résultats utiles pour la suite et d'obtenir quelques éléments découlant immédiatement du théorème de S.\ Morier-Genoud. 

\subsubsection{Cas $p=2$}
On peut utiliser la proposition \ref{34} et le théorème \ref{36}
pour obtenir d'autres formules de dénombrement. On se propose ici d'appliquer ce cas aux triangulations de polygones via la définition ci-dessous :

\begin{definition}
\label{41}

Soit $P=P_{1}\ldots P_{n}$ un polygone convexe à $n$ sommets. On effectue une triangulation de $P$ et on note $(d_{1},\ldots,d_{n})$ la quiddité associée à celle-ci. La séquence de parité de la triangulation est le $n$-uplet $(\overline{d_{1}},\ldots,\overline{d_{n}})\in\mathbb{F}_2^n$, c'est-à-dire que pour tout $i$, $1\le i\le n$, $\overline{d_{i}}:=\overline{0}$ si un nombre pair de triangles utilise le sommet $P_{i}$ et $\overline{d_{i}}:=\overline{1}$ sinon.

\end{definition}

\begin{figure}[ht]
$
\shorthandoff{; :!?}
\xymatrix @!0 @R=0.40cm @C=0.5cm
{
&&1\ar@{-}[lldd] \ar@{-}[rr]&&5\ar@{-}[rrdd]\ar@{-}[dddddd]\ar@{-}[lldddddd]\ar@{-}[rrdddd]&
\\
&&&
\\
3\ar@{-}[dd]\ar@{-}[rrrruu]&&&&&& 1 \ar@{-}[dd]
\\
&&
\\
1&&&&&& 2
\\
&&&
\\
&&3 \ar@{-}[rr]\ar@{-}[lluuuu]\ar@{-}[lluu] &&2 \ar@{-}[rruu]
}
$ \quad \quad \quad $
\shorthandoff{; :!?}
\xymatrix @!0 @R=0.40cm @C=0.5cm
{
&&\overline{1}\ar@{-}[lldd] \ar@{-}[rr]&&\overline{1}\ar@{-}[rrdd]\ar@{-}[dddddd]\ar@{-}[lldddddd]\ar@{-}[rrdddd]&
\\
&&&
\\
\overline{1}\ar@{-}[dd]\ar@{-}[rrrruu]&&&&&& \overline{1} \ar@{-}[dd]
\\
&&
\\
\overline{1}&&&&&& \overline{0}
\\
&&&
\\
&&\overline{1} \ar@{-}[rr]\ar@{-}[lluuuu]\ar@{-}[lluu] &&\overline{0} \ar@{-}[rruu]
}
$
\caption{À gauche une triangulation et sa quiddité, à droite la séquence de parité de cette triangulation\label{figquidpond}}
\end{figure}
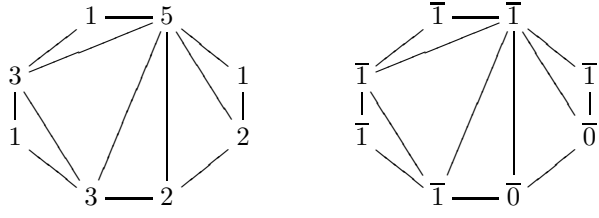
\begin{example}
\noindent {\rm La séquence de parité de la triangulation dans la figure \ref{figquidpond} est
$(\overline{1}, \overline{1}, \overline{1}, \overline{1}, \overline{0}, \overline{0}, \overline{1}, \overline{1})$. C'est une solution de $(E_{\mathbb{F}_2})=(E_{2})$.
}

\end{example}

\begin{proposition}[\cite{M0}, remarque 5.4]
\label{42}
Soit $n \geq 3$.

\smallskip
i) Toute séquence de parité d'une triangulation d'un polygone convexe à $n$ sommets est une solution de taille $n$ de $(E_{2})$ possédant un élément non nul.

\smallskip
ii) Si $(\overline{c_{1}},\ldots,\overline{c_{n}})$ est une solution de $(E_{2})$ et s'il existe un entier $i$ dans $[\![1;n]\!]$ tel que $\overline{c_{i}} \neq \overline{0}$ alors $(\overline{c_{1}},\ldots,\overline{c_{n}})$ est la séquence de parité d'une triangulation d'un polygone convexe à $n$ sommets.
\end{proposition}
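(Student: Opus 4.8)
Le plan est de traiter séparément les deux assertions. Pour i), on part d'une triangulation $T$ d'un polygone convexe à $n$ sommets, de quiddité $(d_1,\ldots,d_n)$. D'après le théorème de Conway--Coxeter, $(d_1,\ldots,d_n)$ détermine la deuxième ligne d'une frise docile de largeur $n-3$, donc la proposition \ref{34} donne $M_n(d_1,\ldots,d_n)=-\id$. En réduisant chaque coefficient modulo $2$ et en utilisant $\overline{-1}=\overline{1}$ dans $\mathbb{F}_2$, on obtiendra $M_n(\overline{d_1},\ldots,\overline{d_n})=\overline{\id}$, c'est-à-dire que la séquence de parité est une solution de $(E_2)$. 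Pour l'existence d'un coefficient non nul, on utilisera que toute triangulation d'un polygone convexe à $n$ sommets (avec $n\ge 3$) possède au moins une oreille, c'est-à-dire un sommet $P_i$ contenu dans exactement un triangle, pour lequel $d_i=1$ et donc $\overline{d_i}=\overline{1}$.

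Pour ii), on raisonnera par récurrence sur $n$. Le cas de base $n=3$ se règle grâce à la classification des solutions de taille $3$ : sur $\mathbb{F}_2$, la seule solution ayant un coefficient non nul est $(\overline{1},\overline{1},\overline{1})$, qui est la séquence de parité de l'unique triangulation d'un triangle. Pour l'hérédité, soit $(\overline{c_1},\ldots,\overline{c_n})$ une solution de $(E_2)$ avec $n\ge 4$ et $\overline{c_j}=\overline{1}$ pour un certain $j$. L'ingrédient clé est une opération de suppression d'oreille : sur n'importe quel anneau on a l'identité
\[ \begin{pmatrix} x & -1\\ 1 & 0\end{pmatrix}\begin{pmatrix} 1 & -1\\ 1 & 0\end{pmatrix}\begin{pmatrix} y & -1\\ 1 & 0\end{pmatrix}=\begin{pmatrix} x-1 & -1\\ 1 & 0\end{pmatrix}\begin{pmatrix} y-1 & -1\\ 1 & 0\end{pmatrix}, \]
qui traduit aussi la compatibilité de $\oplus$ avec le triplet $(1,1,1)$ rappelée après la définition \ref{22}. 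Quitte à permuter circulairement (les solutions étant stables par $\sim$, voir la définition \ref{23} et la remarque qui la suit), on peut supposer $j$ intérieur ; l'identité montre alors que le $(n-1)$-uplet obtenu en supprimant le $j$-ème coefficient et en changeant la parité de ses deux voisins est encore une solution de $(E_2)$.

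Il faudra choisir $j$ de façon que l'uplet réduit ne soit pas identiquement $\overline{0}$ : on vérifiera que cela n'échoue que si $(\overline{c_1},\ldots,\overline{c_n})$ vaut, à $\sim$ près, $(\overline{1},\overline{1},\overline{1},\overline{0},\ldots,\overline{0})$ et que $j$ est le $\overline{1}$ central, auquel cas il suffit de prendre pour $j$ l'un des deux $\overline{1}$ extrêmes. On disposera alors d'une solution de $(E_2)$ de taille $n-1\ge 3$ ayant un coefficient non nul ; par hypothèse de récurrence ce sera la séquence de parité d'une triangulation $T'$ d'un polygone convexe à $n-1$ sommets. En recollant une oreille à $T'$ (ajout d'un sommet et d'un triangle le long de l'arête correspondant à la position supprimée), on obtiendra une triangulation $T$ d'un polygone convexe à $n$ sommets dont la quiddité, réduite modulo $2$, coïncide à la permutation circulaire initiale près avec $(\overline{c_1},\ldots,\overline{c_n})$ ; comme les séquences de parité sont stables par $\sim$, cela conclura.

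La partie calculatoire (l'identité matricielle et les classifications en petite taille) est déjà acquise. Le principal point délicat sera l'analyse de la configuration exceptionnelle $(\overline{1},\overline{1},\overline{1},\overline{0},\ldots,\overline{0})$ et surtout le suivi soigneux des permutations circulaires reliant un uplet, l'uplet obtenu après suppression d'une oreille et la triangulation recollée, de sorte que la séquence de parité finale soit exactement l'uplet donné, et pas seulement un uplet $\sim$-équivalent de la forme voulue.
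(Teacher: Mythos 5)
Votre démonstration est correcte. Notez toutefois que le texte ne démontre pas cette proposition : elle y est importée telle quelle de \cite{M0} (remarque 5.4), si bien qu'il n'y a pas de preuve interne à comparer ; votre argument (Conway--Coxeter et la proposition \ref{34} suivis de la réduction modulo $2$ pour i), puis la récurrence par suppression d'oreille via $M_{3}(x,1,y)=M_{2}(x-1,y-1)$ pour ii)) est la voie standard et comble ce renvoi de façon autonome. Les deux points délicats que vous identifiez sont bien les bons et votre traitement en est juste : la configuration exceptionnelle où la réduction annule tout n'apparaît que pour un uplet $\sim$-équivalent à $(\overline{1},\overline{1},\overline{1},\overline{0},\ldots,\overline{0})$ (qui est effectivement une solution pour certains $n$ impairs) et se contourne en retirant un $\overline{1}$ extrême, et la stabilité des séquences de parité sous $\sim$ (relabellisation diédrale des sommets du polygone) permet de conclure après la permutation circulaire initiale.
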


Grâce à cela, on peut compter le nombre de séquences de parité d'une triangulation d'un polygone convexe.

\begin{proposition}
\label{43}
Soit $n \geq 3$ et $P$ un polygone convexe à $n$ sommets.

\smallskip
i) Si $n$ est impair, il y a $\left[\frac{n-1}{2}\right]_{4}$ séquences de parité possibles associées aux triangulations de $P$.

\smallskip
ii) Si $n=2m$ est pair, il y a $\binom{m}{2}_{2}+2^{m-1}-1$ séquences de parité possibles associées aux triangulations de $P$.
\end{proposition}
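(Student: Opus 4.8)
The plan is to reduce the statement to a count of solutions of $(E_2)$, a count already provided by Theorem~\ref{36}, and then to remove the unique all-zero tuple when (and only when) it happens to be a solution.

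First I would record the elementary but crucial point that over $\mathbb{F}_2$ one has $-\id = \id$, so that $(E_2)$ is just the equation $M_n(a_1,\ldots,a_n) = \id$ and the number of its solutions of size $n$ equals $u_{n,2}^{-}$ (which here also equals $u_{n,2}^{+}$). By Proposition~\ref{42}, the set of parity sequences attached to the triangulations of a convex polygon with $n$ vertices is \emph{exactly} the set of solutions of $(E_2)$ having at least one nonzero coordinate. Therefore the quantity we must compute is $u_{n,2}^{-} - \varepsilon_n$, where $\varepsilon_n = 1$ if $(\overline{0},\ldots,\overline{0})$ is a solution of $(E_2)$ and $\varepsilon_n = 0$ otherwise.

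Next I would pin down $\varepsilon_n$. Since
\[ M_n(0,\ldots,0) = \begin{pmatrix} 0 & -1 \\ 1 & 0 \end{pmatrix}^{n} \quad\text{and}\quad \begin{pmatrix} 0 & -1 \\ 1 & 0 \end{pmatrix}^{2} = -\id \equiv \id \ (\mathrm{mod}\ 2), \]
the tuple $(\overline{0},\ldots,\overline{0})$ is a solution of $(E_2)$ precisely when $n$ is even; thus $\varepsilon_n = 1$ if $n$ is even and $\varepsilon_n = 0$ if $n$ is odd.

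Finally I would substitute the known value of $u_{n,2}^{-}$. For $n > 4$ this is Theorem~\ref{36} specialised to $q = 2$: when $n$ is odd, $u_{n,2}^{-} = [\frac{n-1}{2}]_{4}$ and $\varepsilon_n = 0$, giving i); when $n = 2m$ is even, $u_{n,2}^{-} = \binom{m}{2}_{2} + 2^{m-1}$ and $\varepsilon_n = 1$, giving $\binom{m}{2}_{2} + 2^{m-1} - 1$, which is ii). The two sizes $n = 3$ and $n = 4$ are not covered by Theorem~\ref{36}, and I would settle them by hand from the list of small solutions recalled in Section~\ref{quid}, being careful that over $\mathbb{F}_2$ the tuples $(1,1,1)$ and $(-1,-1,-1)$ become equal and that the two four-term families collapse into $\{(a,b,a,b) : ab = 0\}$; one finds $1$ parity sequence for $n = 3$ and $2$ for $n = 4$, matching the formulas. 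There is no genuine obstacle in this argument; the only points demanding attention are the identification $+\id = \id$ over $\mathbb{F}_2$ (which is what lets Proposition~\ref{42} and Theorem~\ref{36} be read as statements about the single equation $(E_2)$), the computation of $\varepsilon_n$ via the order-$4$ rotation matrix, and the small-size bookkeeping for $n = 3,4$.
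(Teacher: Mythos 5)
Your proof is correct and follows essentially the same route as the paper: identify the parity sequences with the nonzero solutions of $(E_{2})$ via Proposition~\ref{42}, invoke Théorème~\ref{36} with $q=2$, and subtract $1$ for the all-zero tuple exactly when $n$ is even. In fact you are slightly more careful than the paper, which applies Théorème~\ref{36} (stated for $n>4$) without comment while the proposition is asserted for all $n\geq 3$; your separate verification of the cases $n=3$ and $n=4$ from the list of small solutions closes that minor gap.
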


\begin{proof}

Par ce qui précède, le nombre de séquences de parité possibles associées aux triangulations de $P$ est égale au nombre de solutions non nulles de $(E_{2})$. Or, il n'y a pas de solution nulle de taille impaire. Donc, le théorème \ref{36} donne i). De plus, il y a exactement une solution nulle de taille paire égale à $2m$. Ainsi, en retirant 1 à la formule correspondante du théorème \ref{36}, on obtient ii).
\end{proof}

\subsubsection{Quelques opérations sur les $\lambda$-quiddités}

Le résultat suivant généralise la propriété 5.1 de \cite{CH}.

\begin{lemma}
\label{48}

Soient $n \geq 4$, $n=2m$ et $\lambda$ un élément inversible de $A$. Soit $(a_{1},\ldots,a_{n}) \in A^{n}$. Si $M_{n}(a_{1},\ldots,a_{n})=\epsilon \id$ avec $\epsilon \in \{1,-1\}$ alors $M_{n}(\lambda a_{1},\lambda^{-1} a_{2},\ldots,\lambda a_{2m-1},\lambda^{-1} a_{2m})=\epsilon \id$.

\end{lemma}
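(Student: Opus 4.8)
The statement is a conjugation identity for the matrix $M_n$ under the "alternating scaling" $(a_1,\ldots,a_{2m})\mapsto(\lambda a_1,\lambda^{-1}a_2,\ldots,\lambda a_{2m-1},\lambda^{-1}a_{2m})$. The natural approach is to conjugate each $2\times 2$ factor by the diagonal matrix $D:=\begin{pmatrix}\lambda & 0\\ 0 & 1\end{pmatrix}$ (or its inverse, depending on the position). Concretely, I would compute
\[
D\begin{pmatrix} a & -1\\ 1 & 0\end{pmatrix}D^{-1}=\begin{pmatrix}a & -\lambda\\ \lambda^{-1} & 0\end{pmatrix},\qquad
D^{-1}\begin{pmatrix} a & -1\\ 1 & 0\end{pmatrix}D=\begin{pmatrix}a & -\lambda^{-1}\\ \lambda & 0\end{pmatrix},
\]
and observe that
\[
\begin{pmatrix}\lambda a & -1\\ 1 & 0\end{pmatrix}=D\begin{pmatrix} a & -1\\ 1 & 0\end{pmatrix}D^{-1}\cdot\begin{pmatrix}\lambda & 0\\ 0 & \lambda^{-1}\end{pmatrix}^{?}
\]
— so the cleanest bookkeeping is actually to split the scaling as a product of a conjugation and a correction.

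The slicker route, which I would actually write up, is this: let $E:=\begin{pmatrix}\lambda^{-1} & 0\\ 0 & \lambda\end{pmatrix}$ and note the single identity
\[
\begin{pmatrix}\lambda^{-1}a & -1\\ 1 & 0\end{pmatrix}=E^{-1}\begin{pmatrix} a & -1\\ 1 & 0\end{pmatrix}E\cdot\text{(something diagonal)}?
\]
Hmm — rather than guess, let me commit to the safe telescoping argument. Write $B(a)=\begin{pmatrix}a&-1\\1&0\end{pmatrix}$ and $D=\mathrm{diag}(\lambda,1)$. A direct check gives $B(\lambda a)=D\,B(a)\,D'$ where $D'=\mathrm{diag}(\lambda^{-1},1)$ *times* a swap of which diagonal entry is scaled; the point is that $D\,B(a)\,D^{-1}=\begin{pmatrix}a&-\lambda\\ \lambda^{-1}&0\end{pmatrix}$, and more importantly the two basic relations
\[
B(\lambda a)=\mathrm{diag}(\lambda,1)\,B(a)\,\mathrm{diag}(1,\lambda^{-1}),\qquad
B(\lambda^{-1}a)=\mathrm{diag}(1,\lambda)\,B(a)\,\mathrm{diag}(\lambda^{-1},1)
\]
hold by inspection (each is a $2\times2$ verification). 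Now in the product $M_{2m}(\lambda a_1,\lambda^{-1}a_2,\ldots,\lambda a_{2m-1},\lambda^{-1}a_{2m})=B(\lambda^{-1}a_{2m})B(\lambda a_{2m-1})\cdots B(\lambda^{-1}a_2)B(\lambda a_1)$, substitute these two relations and watch the diagonal matrices telescope: the right factor of $B(\lambda^{-1}a_{2k})$ is $\mathrm{diag}(\lambda^{-1},1)$ and the left factor of the next block $B(\lambda a_{2k-1})$ is $\mathrm{diag}(\lambda,1)$, whose product is the identity; similarly at the boundary between a $\lambda$-block and the following $\lambda^{-1}$-block the factors $\mathrm{diag}(1,\lambda^{-1})$ and $\mathrm{diag}(1,\lambda)$ cancel. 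Because $n=2m$ is even, the pattern closes up perfectly, leaving only the outermost diagonal matrices, which I will choose (by fixing the convention for the first and last factor) to be $\mathrm{diag}(1,1)$ on one side and $\mathrm{diag}(1,1)$ on the other — or, if a nontrivial $\mathrm{diag}(\lambda^{\pm1},\lambda^{\mp1})$ survives, it commutes with $\pm\id$ and can be cancelled from both sides. Hence $M_{2m}(\ldots)=M_{2m}(a_1,\ldots,a_{2m})=\epsilon\id$.

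The only real care needed — and the step I expect to be fussiest — is getting the diagonal conventions consistent so the telescoping genuinely produces identities at every interface and at the two ends; an off-by-one in which entry gets scaled would break the cancellation. I would therefore do the two base $2\times2$ computations explicitly, then prove the telescoping by a short induction on $m$: the inductive step peels off the rightmost pair $B(\lambda^{-1}a_2)B(\lambda a_1)$, rewrites it as $P\cdot B(a_2)B(a_1)\cdot Q$ with $P,Q$ diagonal, checks $P$ is absorbed by the tail of the shorter product and $Q=\id$ (or central), and invokes the hypothesis. Parity of $n$ is exactly what makes the blocks pair up, which is why the hypothesis $n=2m$ is needed; the evenness is used nowhere else.
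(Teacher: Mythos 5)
Your proof is correct and follows essentially the same route as the paper: both rewrite the scaled factors via conjugation by diagonal matrices so that everything telescopes, leaving only a pair of mutually inverse diagonal matrices conjugating $\epsilon\id$ (the paper packages this as a single identity for a product of two consecutive blocks, you do it factor by factor, but it is the same mechanism). Your two base relations $B(\lambda a)=\mathrm{diag}(\lambda,1)\,B(a)\,\mathrm{diag}(1,\lambda^{-1})$ and $B(\lambda^{-1}a)=\mathrm{diag}(1,\lambda)\,B(a)\,\mathrm{diag}(\lambda^{-1},1)$ do check out, and the interfaces and endpoints cancel exactly as you describe.
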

\begin{proof}
On vérifie par un calcul direct:
\[
\begin{pmatrix} \lambda^{-1} & 0 \\ 0 & 1 \end{pmatrix}
\begin{pmatrix} \lambda a & -1 \\ 1 & 0 \end{pmatrix}
\begin{pmatrix} \lambda^{-1} b & -1 \\ 1 & 0 \end{pmatrix}
\begin{pmatrix} \lambda & 0 \\ 0 & 1 \end{pmatrix}
=
\begin{pmatrix} a & -1 \\ 1 & 0 \end{pmatrix}
\begin{pmatrix} b & -1 \\ 1 & 0 \end{pmatrix}
\]
pour tout $(a,b)\in A^{2}$. Ainsi, \[\epsilon \id=M_{n}(a_{1},\ldots,a_{n})=\begin{pmatrix} \lambda & 0 \\ 0 & 1 \end{pmatrix}M_{n}(\lambda a_{1},\lambda^{-1} a_{2},\ldots,\lambda a_{2m-1},\lambda^{-1} a_{2m})\begin{pmatrix} \lambda^{-1} & 0 \\ 0 & 1 \end{pmatrix}.\]
\noindent Donc, $M_{n}(\lambda a_{1},\lambda^{-1} a_{2},\ldots,\lambda a_{2m-1},\lambda^{-1} a_{2m})=\epsilon \id$.
\end{proof}

\noindent On a de plus quelques égalités concernant les matrices $M_{n}(a_{1},\ldots,a_{n})$.

\begin{lemma}[\cite{CH}, proposition 4.1 et lemme 4.2]
\label{form}
Soient $A$ un anneau commutatif unitaire et\\$(a,b,u,v,x,y) \in A^{6}$ avec $a$ et $uv-1$ inversibles dans $A$. On a :

\smallskip
i) $M_{3}(x,0,y)=-M_{1}(x+y)$;

\smallskip
ii) $M_{3}(x,1,y)=M_{2}(x-1,y-1)$;

\smallskip
iii) $M_{3}(x,-1,y)=-M_{2}(x+1,y+1)$;

\smallskip
iv) $M_{4}(x,u,v,y)=M_{3}(x+(1-v)(uv-1)^{-1},uv-1,y+(1-u)(uv-1)^{-1})$;

\smallskip
v) $M_{5}(x,a,a^{-1},b,y)=M_{3}((a^{2}x-2a+b)a^{-2},-a,(ay-1)a^{-1})$.
\end{lemma}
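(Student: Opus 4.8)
The plan is to establish all five identities by direct multiplication of $2\times 2$ matrices from the definition $M_k(a_1,\ldots,a_k)=\begin{pmatrix} a_k & -1 \\ 1 & 0\end{pmatrix}\cdots\begin{pmatrix} a_1 & -1 \\ 1 & 0\end{pmatrix}$, being careful that the factors appear in reverse order of the indices. Write $T_t:=\begin{pmatrix} t & -1 \\ 1 & 0\end{pmatrix}$, so that $M_k(a_1,\ldots,a_k)=T_{a_k}\cdots T_{a_1}$, and record the two elementary identities $\begin{pmatrix} 1 & c \\ 0 & 1\end{pmatrix}T_t=T_{t+c}$ and $T_t\begin{pmatrix} 1 & 0 \\ c & 1\end{pmatrix}=T_{t-c}$, each checked by one multiplication; these let us absorb elementary factors into the endpoints of a product of $T$'s.

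For i), ii), iii) I would simply expand $T_yT_\varepsilon T_x$ for $\varepsilon\in\{0,1,-1\}$. One gets $T_0T_x=\begin{pmatrix} -1 & 0 \\ x & -1\end{pmatrix}$, hence $T_yT_0T_x=\begin{pmatrix} -x-y & 1 \\ -1 & 0\end{pmatrix}=-T_{x+y}$, which is i); and for $\varepsilon=\pm1$ one finds $T_\varepsilon T_x=\begin{pmatrix} \varepsilon x-1 & -\varepsilon \\ x & -1\end{pmatrix}$, so that left-multiplying by $T_y$ and simplifying yields $T_{y-1}T_{x-1}$ when $\varepsilon=1$ (this is ii) and $-T_{y+1}T_{x+1}$ when $\varepsilon=-1$ (this is iii). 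No invertibility hypothesis is needed here.

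For iv) the cleanest route is to treat the inner block first: with $z:=uv-1$ invertible, show by a short computation that
\[ T_vT_u=\begin{pmatrix} 1 & 0 \\ (u-1)z^{-1} & 1\end{pmatrix}\,T_z\,\begin{pmatrix} 1 & (1-v)z^{-1} \\ 0 & 1\end{pmatrix}, \]
the only nonobvious step being that the $(2,2)$ entry of the right-hand side equals $z^{-1}\bigl(u(1-v)+(1-u)\bigr)=z^{-1}(1-uv)=-1$. Then $M_4(x,u,v,y)=T_y(T_vT_u)T_x$; pulling the lower-triangular factor through $T_y$ on the left via $T_y\begin{pmatrix} 1 & 0 \\ c & 1\end{pmatrix}=T_{y-c}$ turns $T_y$ into $T_{y+(1-u)z^{-1}}$, and pulling the upper-triangular factor through $T_x$ on the right via $\begin{pmatrix} 1 & c \\ 0 & 1\end{pmatrix}T_x=T_{x+c}$ turns $T_x$ into $T_{x+(1-v)z^{-1}}$, leaving exactly $M_3\bigl(x+(1-v)z^{-1},\,z,\,y+(1-u)z^{-1}\bigr)$, which is iv).

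For v), note that here $uv-1=a\cdot a^{-1}-1=0$, so iv) does not apply and the middle block collapses instead: a direct computation gives $T_{a^{-1}}T_aT_x=\begin{pmatrix} -a^{-1} & 0 \\ ax-1 & -a\end{pmatrix}$ (invertibility of $a$ is used here), and then successively left-multiplying by $T_b$ and $T_y$ produces an explicit $2\times2$ matrix which one compares entry by entry with the expansion of $T_{(ay-1)a^{-1}}\,T_{-a}\,T_{(a^2x-2a+b)a^{-2}}$; the two coincide after routine algebra. Thus the whole lemma reduces to a handful of explicit small multiplications; the only places where care is genuinely needed are keeping the $T$-factors in the right (reversed) order and correctly propagating the $(uv-1)^{-1}$ terms in iv).
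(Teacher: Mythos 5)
Votre démonstration est correcte et suit essentiellement la même voie que l'article, qui se contente d'invoquer un calcul direct (en renvoyant à \cite{CH}) : vous explicitez ce calcul, et toutes les identités matricielles que vous écrivez (y compris la factorisation de $T_vT_u$ par des matrices élémentaires pour iv) et le traitement séparé de v) où $uv-1=0$) se vérifient. Rien à redire.
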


\begin{proof}

Ces formules découlent d'un calcul direct. Par ailleurs, i) et iv) sont donnés dans le lemme 4.2 de \cite{CH} et ii) et iii) sont donnés dans la proposition 4.1 de \cite{CH}.
\end{proof}

\subsubsection{Cas $n$ impair}

Soient $p$ un nombre premier impair et $q$ une puissance de $p$. Le théorème \ref{36} nous donne $u_{n, q}^{-}=\left[\frac{n-1}{2}\right]_{q^{2}}$. Pour démontrer le point i) du théorème \ref{25}, il reste donc à montrer que $u_{n, q}^{-}=u_{n, q}^{+}$.

\begin{proposition}
\label{44}
Soient $n \in \bN^{*}$, $n$ impair, $A$ un anneau commutatif unitaire, $\Omega_{n}(A)$ l'ensemble des solutions de taille $n$ de l'équation $M_{n}(a_{1},\ldots,a_{n})= \id$ sur $A$ et $\Xi_{n}(A)$ l'ensemble des solutions de taille $n$ de $M_{n}(a_{1},\ldots,a_{n})= -\id$ sur $A$. L'application 
\[\begin{array}{ccccc}  
\varphi_{n} & : & \Omega_{n}(A) & \longrightarrow & \Xi_{n}(A) \\
 & & (a_{1},\ldots,a_{n}) & \longmapsto & (-a_{1},\ldots,-a_{n})  \\
\end{array}\] est une bijection.
\end{proposition}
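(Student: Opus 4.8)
The plan is to observe that the coordinatewise negation $\psi\colon A^n\to A^n$, $\psi(a_1,\ldots,a_n)=(-a_1,\ldots,-a_n)$, is an involution of $A^n$ (it satisfies $\psi\circ\psi=\id_{A^n}$), and then to show that for $n$ odd it interchanges the two solution sets, i.e. $\psi(\Omega_n(A))\subseteq\Xi_n(A)$ and $\psi(\Xi_n(A))\subseteq\Omega_n(A)$. Once this is established, $\varphi_n=\psi|_{\Omega_n(A)}$ and $\psi|_{\Xi_n(A)}$ are mutually inverse maps, hence bijections. Everything reduces to a single conjugation identity for the matrices $M_n$.

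First I would record the elementary fact that, setting $P=\begin{pmatrix}1&0\\0&-1\end{pmatrix}$ (so $P^2=\id$), one has
\[\begin{pmatrix}-a&-1\\1&0\end{pmatrix}=-P\begin{pmatrix}a&-1\\1&0\end{pmatrix}P\qquad\text{for all }a\in A,\]
which is an immediate $2\times2$ computation. Multiplying these relations for $i=n,n-1,\ldots,1$ and repeatedly using $P^2=\id$ to cancel the interior occurrences of $P$, I obtain
\[M_n(-a_1,\ldots,-a_n)=(-1)^n\,P\,M_n(a_1,\ldots,a_n)\,P.\]
The only point requiring a little care here is the telescoping of the factors $P$ and the bookkeeping of the global sign $(-1)^n$; this is the ``hard part'', but it is entirely routine.

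Finally I would specialize to $n$ odd, where $(-1)^n=-1$, so that $M_n(-a_1,\ldots,-a_n)=-P\,M_n(a_1,\ldots,a_n)\,P$. If $(a_1,\ldots,a_n)\in\Omega_n(A)$, i.e. $M_n(a_1,\ldots,a_n)=\id$, then $M_n(-a_1,\ldots,-a_n)=-P\id P=-P^2=-\id$, so $\psi(a_1,\ldots,a_n)\in\Xi_n(A)$; symmetrically, if $M_n(a_1,\ldots,a_n)=-\id$ then $M_n(-a_1,\ldots,-a_n)=-P(-\id)P=P^2=\id$, so $\psi$ maps $\Xi_n(A)$ into $\Omega_n(A)$. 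Since $\psi\circ\psi=\id_{A^n}$, the restrictions $\psi|_{\Omega_n(A)}$ and $\psi|_{\Xi_n(A)}$ are inverse to one another; in particular $\varphi_n$ is a bijection, which is the claim.
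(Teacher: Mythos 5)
Your proof is correct and takes essentially the same route as the paper: the paper conjugates by $K=\begin{pmatrix}-1&0\\0&1\end{pmatrix}=-P$, which yields exactly your identity $M_n(-a_1,\ldots,-a_n)=(-1)^n\,K\,M_n(a_1,\ldots,a_n)\,K$ and the same sign analysis for $n$ odd. Your explicit observation that coordinatewise negation is an involution interchanging $\Omega_n(A)$ and $\Xi_n(A)$ merely spells out the bijectivity a little more carefully than the paper does.
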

\begin{proof}
Avec
\[ K:=\begin{pmatrix} -1 & 0 \\ 0 & 1 \end{pmatrix},
\quad
K \begin{pmatrix} a & -1 \\ 1 & 0 \end{pmatrix} K
= -\begin{pmatrix} -a & -1 \\ 1 & 0 \end{pmatrix},
\]
\noindent on obtient pour $(a_1,\ldots,a_n)\in \Omega_{n}(A)$
\begin{eqnarray*}
\id = K^2 = K M_n(a_1,\ldots,a_n) K &=& K M_1(a_n) K \ldots K M_{1}(a_1) K \\
&=& (-1)^{n} M_n(-a_1,\ldots,-a_n) \\
&=& - M_n(-a_1,\ldots,-a_{n-1},-a_n)
\end{eqnarray*}
puisque $n$ est impair.
\end{proof}

\noindent Cette proposition donne $u_{n,q}^{-}=u_{n,q}^{+}$ pour $n$ impair et $A=\mathbb{F}_q$, donc la partie i) du théorème \ref{25}.

\section{Démonstration du théorème \ref{25}}
\label{preuve25}

\subsection{Preuve directe}

Soient $n$ un entier naturel, $p$ un nombre premier impair et $q$ une puissance de $p$. Si $n$ est impair alors les formules contenues dans le théorème \ref{25} sont vraies (voir section \ref{chap41}). On suppose maintenant $n=2m$.
\\
\\Notons $\Omega_{n}(q)$ l'ensemble des solutions de taille $n$ de $M_{n}(a_{1},\ldots,a_{n})= \id$ sur $\mathbb{F}_{q}$ et $\Xi_{n}(q)$ l'ensemble des solutions de taille $n$ de $M_{n}(a_{1},\ldots,a_{n})= -\id$ sur $\mathbb{F}_{q}$. $u_{2m,q}^{-}=\left|\Xi_{n}(q)\right|$ est donné par le théorème \ref{36}.
\\
\\ Pour terminer la preuve, il nous reste à calculer $u_{2m,q}^{+}$. On a :
\begin{eqnarray*}
\Omega_{n}(q) &=& \{(a_{1},\ldots,a_{n}) \in \Omega_{n}(q),~a_{2}=0\} \sqcup \{(a_{1},\ldots,a_{n}) \in \Omega_{n}(q),~a_{2} \neq 0\} \\
                         &=& \bigsqcup_{a \in \mathbb{F}_{q}} \underbrace{\{(a_{1},\ldots,a_{n}) \in \Omega_{n}(q),~a_{1}=a~{\rm et}~a_{2}=0\}}_{\Psi_{a}} \bigsqcup_{b \in \mathbb{F}_{q}^{*}} \underbrace{\{(a_{1},\ldots,a_{n}) \in \Omega_{n}(q),~a_{2}=b\}}_{\Theta_{b}} .\\
\end{eqnarray*}
												
\noindent Considérons, pour commencer, les deux applications suivantes définies pour $a \in \mathbb{F}_{q}$ : \[\begin{array}{ccccc} 
\varphi_{n,a} : & \Psi_{a} & \longrightarrow & \Xi_{n-2}(q) \\
  & (a,0,a_{3},\ldots,a_{2m}) & \longmapsto & (a+a_{3},\ldots,a_{2m})  \\
\end{array}~{\rm et}~\begin{array}{ccccc} 
\phi_{n,a}  : & \Xi_{n-2}(q) & \longrightarrow & \Psi_{a}~~~~~~~~~~~~~~. \\
 & (a_{1},\ldots,a_{2m-2}) & \longmapsto & (a,0,a_{1}-a,a_{2},\ldots,a_{2m})  \\
\end{array}\]

\noindent Par le lemme \ref{form} i), ces deux applications sont bien définies. De plus, ce sont des bijections réciproques. Donc, $\left|\Psi_{a}\right|=\left|\Xi_{n-2}(q)\right|$ et $\left|\{(a_{1},\ldots,a_{n}) \in \Omega_{n}(q),~a_{2}=0\}\right|=qu_{2m-2,q}^{-}$.
\\
\\ \noindent Considérons, maintenant, les deux applications suivantes définies pour $b \in \mathbb{F}_{q}^{*}$: \[\begin{array}{ccccc} 
\vartheta_{n,b} : & \Theta_{b} & \longrightarrow & \Omega_{n-1}(q) \\
  & (a_{1},b,a_{3},\ldots,a_{2m}) & \longmapsto & (a_{1}b-1,a_{3}b-1,a_{4}b^{-1},a_{5}b,\ldots,a_{2m}b^{-1})  \\
\end{array}\]
\[{\rm et}~~\begin{array}{ccccc} 
\theta_{n,b}  : & \Omega_{n-1}(q) & \longrightarrow & \Theta_{b} \\
 & (a_{1},\ldots,a_{2m-1}) & \longmapsto & ((a_{1}+1)b^{-1},b,(a_{2}+1)b^{-1},a_{3}b,\ldots,a_{2m-1}b)  \\
\end{array}.\]

\noindent Par le lemme \ref{48} et le lemme \ref{form} ii), ces deux applications sont bien définies. De plus, ce sont des bijections réciproques. Donc, $\left|\Theta_{b}\right|=\left|\Omega_{n-1}(q)\right|$ et $\left|\{(a_{1},\ldots,a_{n}) \in \Omega_{n}(q),~a_{2} \neq 0\}\right|=(q-1)u_{2m-1,q}^{+}$.
\\
\\On aboutit ainsi à la formule de récurrence suivante : $u_{2m,q}^{+}=(q-1)u_{2m-1,q}^{+}+q u_{2m-2,q}^{-}$.
\\
\\En particulier, puisque $u_{2m-1,q}^{+}=u_{2m-1,q}^{-}$ (voir proposition \ref{44}), on a $u_{2m,q}^{+}=(q-1)u_{2m-1,q}^{-}+q u_{2m-2,q}^{-}$. Or, $u_{2m-1,q}^{-}$ et $u_{2m-2,q}^{-}$ sont connus. Ainsi, si $m$ est pair, on a :
\[u_{n,q}^{+}=(q-1)\frac{q^{2m-2}-1}{q^{2}-1}+q(q-1)\binom{m-1}{2}_{q}+q^{m-1}=(q-1)\binom{m}{2}_{q}+q^{m-1}.\]

\noindent De même, si $m$ est impair, on a :
\[u_{n,q}^{+}=(q-1)\frac{q^{2m-2}-1}{q^{2}-1}+q(q-1)\binom{m-1}{2}_{q}=(q-1)\binom{m}{2}_{q}.\]

\noindent Ainsi, les formules données dans le théorème \ref{25} sont vraies, ce qui conclut la preuve de ce dernier. 
\qed

\begin{remark}

{\rm En utilisant les mêmes arguments, on peut démontrer la formule suivante :} 
\[u_{2m,q}^{-}=(q-1)u_{2m-1,q}^{-}+q u_{2m-2,q}^{+}.\] 

\end{remark}

\subsection{La formule générale de récurrence}
\label{chap44}

Nous allons maintenant montrer le théorème \ref{thm412}, c'est-à-dire établir une formule générale de récurrence reliant les solutions de $M_{n}(a_{1},\ldots,a_{n})=B$ et celles de $M_{n}(a_{1},\ldots,a_{n})=-B$ pour différentes valeurs de $n$, avec $B$ une matrice quelconque de $\SL_{2}(\mathbb{F}_{q})$.

\begin{proof}[Démonstration du théorème \ref{thm412}]

Soit $n>4$ et $a \in \mathbb{F}_{q}$. On définit les ensembles suivants :
\begin{itemize}
\item $\Omega_{n}^{B}(q)=\{(a_{1},\ldots,a_{n}) \in \mathbb{F}_{q}^{n},~M_{n}(a_{1},\ldots,a_{n})=B\}$;
\item $\Lambda_{n}^{B}(q)=\{(a_{1},\ldots,a_{n}) \in \Omega_{n}^{B}(q),~a_{2}=0\}$;
\item $\Psi_{n,a}^{B}(q)=\{(a_{1},\ldots,a_{n}) \in \Lambda_{n}^{B}(q),~a_{1}=a\}$;
\item $\Delta_{n}^{B}(q)=\{(a_{1},\ldots,a_{n}) \in \Omega_{n}^{B}(q),~a_{2} \neq 0\}$.
\end{itemize}

\noindent Pour commencer, on va démontrer l' égalité suivante : $\left|\Lambda_{n}^{B}(q)\right|=qu_{n-2,q}^{-B}$. \noindent Considérons, pour cela, les deux applications suivantes définies pour $a \in \mathbb{F}_{q}$ : \[\begin{array}{ccccc} 
\varphi_{n,a} : & \Psi_{n,a}^{B}(q) & \longrightarrow & \Omega_{n-2}^{-B}(q) \\
  & (a,0,a_{3},\ldots,a_{n}) & \longmapsto & (a+a_{3},\ldots,a_{n})  \\
\end{array}~{\rm et}~\begin{array}{ccccc} 
\phi_{n,a}  : & \Omega_{n-2}^{-B}(q) & \longrightarrow & \Psi_{n,a}^{B}(q)~~~~~~~~~~~~~~. \\
 & (a_{1},\ldots,a_{n-2}) & \longmapsto & (a,0,a_{1}-a,a_{2},\ldots,a_{n-2})  \\
\end{array}\]

\noindent Par le lemme \ref{form} i), ces deux applications sont bien définies. De plus, ce sont des bijections réciproques. Donc, $\left|\Psi_{n,a}^{B}(q)\right|=\left|\Omega_{n-2}^{-B}(q)\right|=u_{n-2,q}^{-B}$ et $\left|\Lambda_{n}^{B}(q)\right|=qu_{n-2,q}^{-B}$.
\\
\\De plus, on dispose de la relation suivante : $\left|\Delta_{n}^{B}(q)\right|=\left|\Omega_{n}^{B}(q)\right|-\left|\Lambda_{n}^{B}(q)\right|$. On a :

\[\Omega_{n}^{B}(q)=\{(a_{1},\ldots,a_{n}) \in \Omega_{n}^{B}(q),~a_{2}a_{3} \neq 1\} \sqcup \{(a_{1},\ldots,a_{n}) \in \Omega_{n}^{B}(q),~a_{2}a_{3}=1\}.\]

\noindent i) $\{(a_{1},\ldots,a_{n}) \in \Omega_{n}^{B}(q),~a_{2}a_{3} \neq 1\}=\Lambda_{n}^{B}(q) \sqcup \left(\bigsqcup_{b \in \mathbb{F}_{q}^{*}} \underbrace{\{(a_{1},\ldots,a_{n}) \in \Omega_{n}^{B}(q),~a_{2}=b,~a_{2}a_{3} \neq 1\}}_{Y_{b}}\right)$.
\\
\\ On pose maintenant pour $b \in \mathbb{F}_{q}^{*}$ :
\[\begin{array}{ccccc} 
f_{n,b} : & Y_{b} & \longrightarrow & \Delta_{n-1}^{B}(q) \\
  & (a_{1},b,a_{3},\ldots,a_{n}) & \longmapsto & (a_{1}+(1-a_{3})(ba_{3}-1)^{-1},ba_{3}-1,a_{4}+(1-b)(ba_{3}-1)^{-1},a_{5},\ldots,a_{n})  \\
\end{array}\]
\[{\rm et}~~\begin{array}{ccccc} 
g_{n,b} : & \Delta_{n-1}^{B}(q) & \longrightarrow & Y_{b} \\
 & (a_{1},\ldots,a_{n-1}) & \longmapsto & (a_{1}+(b^{-1}(a_{2}+1)-1)a_{2}^{-1},b,b^{-1}(a_{2}+1),a_{3}+(b-1)a_{2}^{-1},a_{4},\ldots,a_{n-1})  \\
\end{array}.\]

\noindent Par le lemme \ref{form} iv), $f_{n,b}$ et $g_{n,b}$ sont bien définies. Ce sont des bijections réciproques. Donc, on a l'égalité $\left|Y_{b}\right|=\left|\Delta_{n-1}^{B}(q)\right|$. Ainsi, 
\begin{eqnarray*}
\left|\{(a_{1},\ldots,a_{n}) \in \Omega_{n}^{B}(q),~a_{2}a_{3} \neq 1\}\right| &=& \left|\Lambda_{n}^{B}(q)\right|+(q-1)\left|\Delta_{n-1}^{B}(q)\right| \\
                                                                               &=& qu_{n-2,q}^{-B}+(q-1)(\left|\Omega_{n-1}^{B}(q)\right|-\left|\Lambda_{n-1}^{B}(q)\right|) \\
																																							&=&	qu_{n-2,q}^{-B}+(q-1)(u_{n-1,q}^{B}-qu_{n-3,q}^{-B}).	\\
\end{eqnarray*}

\noindent ii) $\{(a_{1},\ldots,a_{n}) \in \Omega_{n}^{B}(q),~a_{2}a_{3}=1\}=\bigsqcup_{x \in \mathbb{F}_{q}} \underbrace{\{(x,a_{2},a_{2}^{-1},a_{4},\ldots,a_{n}) \in \Omega_{n}^{B}(q)\}}_{Z_{x}}$.
\\
\\ On pose maintenant pour $x \in \mathbb{F}_{q}$ :
\[\begin{array}{ccccc} 
h_{n,x} : & Z_{x} & \longrightarrow & \Delta_{n-2}^{B}(q) \\
  & (x,a_{2},a_{2}^{-1},a_{4},a_{5},\ldots,a_{n}) & \longmapsto & ((a_{2}^{2}x-2a_{2}+a_{4})a_{2}^{-2},-a_{2},(a_{2}a_{5}-1)a_{2}^{-1},a_{6}\ldots,a_{n})  \\
\end{array}\] 
\[{\rm et}~~\begin{array}{ccccc} 
k_{n,x} : & \Delta_{n-2}^{B}(q) & \longrightarrow & Z_{x} \\
 & (a_{1},\ldots,a_{n-2}) & \longmapsto & (x,-a_{2},-a_{2}^{-1},a_{2}^{2}(a_{1}-x)-2a_{2},(a_{3}a_{2}-1)a_{2}^{-1},a_{4},\ldots,a_{n-2})  \\
\end{array}.\]

\noindent Par le lemme \ref{form} v), $h_{n,x}$ et $k_{n,x}$ sont bien définies. Ce sont des bijections réciproques. Donc, on a l'égalité $\left|Z_{x}\right|=\left|\Delta_{n-2}^{B}(q)\right|$. Donc, 
\begin{eqnarray*}
\left|\{(a_{1},\ldots,a_{n}) \in \Omega_{n}^{B}(q),~a_{2}a_{3}=1\}\right| &=& q\left|\Delta_{n-2}^{B}(q)\right| \\
                                                                               &=& q(\left|\Omega_{n-2}^{B}(q)\right|-\left|\Lambda_{n-2}^{B}(q)\right|) \\
																																							&=&	q(u_{n-2,q}^{B}-qu_{n-4,q}^{-B}).	\\
\end{eqnarray*}

\noindent iii) En regroupant, les résultats de i) et ii), on a :

\[u_{n,q}^{B}=\left|\Omega_{n}^{B}(q)\right|=qu_{n-2,q}^{-B}+(q-1)(u_{n-1,q}^{B}-qu_{n-3,q}^{-B})+q(u_{n-2,q}^{B}-qu_{n-4,q}^{-B}).\] 
\end{proof}

\noindent Muni de cette relation de récurrence et des petites valeurs de $u_{n,q}^{+}$ et $u_{n,q}^{-}$ évoquées dans la section \ref{quid}, on peut retrouver simplement les théorèmes \ref{25} et \ref{36}. En effet, il suffit de vérifier que les formules données dans les deux énoncés vérifient bien les relations de récurrence que nous venons d'établir. De plus, contrairement à la preuve directe, cette démonstration utilise seulement les formules données dans le lemme \ref{form} et les solutions de \eqref{p} pour les petites valeurs de $n$ (en particulier on n'utilise pas le théorème de S.\ Morier-Genoud).

\begin{remark}
{\rm Les matrices $\pm \id$ sont les seules matrices scalaires que l'on peut considérer (puisque $\lambda \id \in \SL_{2}(\mathbb{F}_{q})$ si et seulement si $\lambda=\pm 1$). Toutefois, on peut chercher, grâce au théorème \ref{thm412}, des formules de comptage pour d'autres matrices.
}
\end{remark}

\section{Nombre de $\lambda$-quiddités sur $\mathbb{Z}/N\mathbb{Z}$ pour certains entiers $N$ non premiers}
\label{preuve26}

L'objectif de cette section est d'obtenir une expression du nombre de $\lambda$-quiddités sur les anneaux $\mathbb{Z}/N\mathbb{Z}$ pour certains entiers $N$ non premiers, en particulier pour $N=4$.

\subsection{Démonstration du théorème \ref{26}}

\begin{proof}

Soient $B \in \SL_{2}(\bZ/4\bZ)$ et $n \in \mathbb{N}^{*}$. On note :
\begin{itemize}
\item $\tilde{\Omega}_{n}^{B}(4)=\{(\overline{a_{1}},\ldots,\overline{a_{n}}) \in (\bZ/4\bZ)^{n},~M_{n}(\overline{a_{1}},\ldots,\overline{a_{n}})=B\}$;
\item $w_{n,4}^{B}$ le cardinal de $\tilde{\Omega}_{n}^{B}(4)$.
\end{itemize}
Soit $n \geq 4$. On va utiliser les solutions de $(E_{\bZ/4\bZ})$ de taille inférieure à 4 pour obtenir une relation de récurrence permettant de calculer les cardinaux souhaités. Pour cela, on va regarder les différentes valeurs possibles de la deuxième composante d'une solution.

\begin{itemize}

\item $(\overline{a_{1}},\overline{1},\overline{a_{3}},\ldots,\overline{a_{n}}) \longmapsto (\overline{a_{1}-1},\overline{a_{3}-1},\overline{a_{4}},\ldots,\overline{a_{n}})$ établit une bijection entre l'ensemble des éléments de $\tilde{\Omega}_{n}^{B}(4)$ dont la deuxième composante est $\overline{1}$ et $\tilde{\Omega}_{n-1}^{B}(4)$. Ainsi, il y a $w_{n-1,4}^{B}$ éléments de $\tilde{\Omega}_{n}^{B}(4)$  possédant un $\overline{1}$ en deuxième position.
\\
\item $(\overline{a_{1}},\overline{-1},\overline{a_{3}},\ldots,\overline{a_{n}}) \longmapsto (\overline{a_{1}+1},\overline{a_{3}+1},\overline{a_{4}},\ldots,\overline{a_{n}})$ établit une bijection entre l'ensemble des éléments de $\tilde{\Omega}_{n}^{B}(4)$ dont la deuxième composante est $\overline{-1}$ et $\tilde{\Omega}_{n-1}^{-B}(4)$. Ainsi, il y a $w_{n-1,4}^{-B}$ éléments de $\tilde{\Omega}_{n}^{B}(4)$  possédant un $\overline{-1}$ en deuxième position.
\\
\item En raisonnant de la même façon que dans la preuve du théorème \ref{thm412}, on obtient qu'il y a $4w_{n-2,4}^{-B}$ éléments dans $\tilde{\Omega}_{n}^{B}(4)$ possédant un $\overline{0}$ en deuxième position. 
\\
\item Pour le cas où la deuxième composante vaut $\overline{2}$, on regarde la troisième composante.

\begin{itemize}[label=$\circ$]

\item $(\overline{a_{1}},\overline{2},\overline{1},\overline{a_{4}},\ldots,\overline{a_{n}}) \longmapsto (\overline{a_{1}-1},\overline{a_{4}-2},\overline{a_{5}},\ldots,\overline{a_{n}})$ établit une bijection entre l'ensemble des éléments de $\tilde{\Omega}_{n}^{B}(4)$ dont la deuxième composante vaut $\overline{2}$ et la troisième $\overline{1}$ et $\tilde{\Omega}_{n-2}^{B}(4)$. Cela donne $w_{n-2,4}^{B}$ éléments de $\tilde{\Omega}_{n}^{B}(4)$ .
\\

\item $(\overline{a_{1}},\overline{2},\overline{-1},\overline{a_{4}},\ldots,\overline{a_{n}}) \longmapsto (\overline{a_{1}+1},\overline{a_{4}-2},\overline{a_{5}},\ldots,\overline{a_{n}})$ établit une bijection entre l'ensemble des éléments de $\tilde{\Omega}_{n}^{B}(4)$ dont la deuxième composante vaut $\overline{2}$ et la troisième $\overline{-1}$ et $\tilde{\Omega}_{n-2}^{B}(4)$. Cela donne $w_{n-2,4}^{B}$ éléments de $\tilde{\Omega}_{n}^{B}(4)$ .
\\

\item $(\overline{a_{1}},\overline{2},\overline{0},\overline{a_{4}},\ldots,\overline{a_{n}}) \longmapsto (\overline{a_{1}},\overline{a_{4}-2},\overline{a_{5}},\ldots,\overline{a_{n}})$ établit une bijection entre l'ensemble des éléments de $\tilde{\Omega}_{n}^{B}(4)$ dont la deuxième composante vaut $\overline{2}$ et la troisième $\overline{0}$ et $\tilde{\Omega}_{n-2}^{-B}(4)$. Cela donne $w_{n-2,4}^{-B}$ éléments de $\tilde{\Omega}_{n}^{B}(4)$ .
\\

\item $(\overline{a_{1}},\overline{2},\overline{2},\overline{a_{4}},\ldots,\overline{a_{n}}) \longmapsto (\overline{a_{1}-2},\overline{a_{4}-2},\overline{a_{5}},\ldots,\overline{a_{n}})$  établit une bijection entre l'ensemble des éléments de $\tilde{\Omega}_{n}^{B}(4)$ dont la deuxième composante vaut $\overline{2}$ et la troisième $\overline{2}$ et $\tilde{\Omega}_{n-2}^{-B}(4)$. Cela donne $w_{n-2,4}^{-B}$ éléments de $\tilde{\Omega}_{n}^{B}(4)$ .
\end{itemize}

\end{itemize}

\noindent Ainsi, on a la formule de récurrence suivante : \[w_{n,4}^{B}=w_{n-1,4}^{B}+w_{n-1,4}^{-B}+6w_{n-2,4}^{-B}+2w_{n-2,4}^{B}.\]
 
\noindent De même, on a $w_{n,4}^{-B}=w_{n-1,4}^{B}+w_{n-1,4}^{-B}+6w_{n-2,4}^{B}+2w_{n-2,4}^{-B}$. 
\\
\\On pose $C=\begin{pmatrix}
               1 & 1 & 2 &6 \\
							 1 & 1 &6 &2 \\
							 1 & 0 &0 &0 \\
							 0 &1 &0 &0 \\
							\end{pmatrix}$, $P=\begin{pmatrix}
               1 & 1 & 1 & 1 \\
							 1 & 1 & -1 & -1 \\
							 1/4 & -1/2 & i/2 & -i/2 \\
							 1/4 & -1/2 & -i/2 & i/2 \\
							\end{pmatrix}$ et $D=\begin{pmatrix}
               4 & 0 & 0 & 0 \\
							 0 & -2 & 0 & 0 \\
							 0 & 0 & -2i & 0 \\
							 0 & 0 & 0 & 2i \\
							\end{pmatrix}$. $P$ est inversible et $P^{-1}=\begin{pmatrix}
               1/3 & 1/3 & 2/3 & 2/3 \\
							 1/6 & 1/6  & -2/3 & -2/3 \\
							 1/4 & -1/4 & -i/2 & i/2 \\
							 1/4 & -1/4 & i/2& -i/2 \\ 
							\end{pmatrix}$. $C$ est diagonalisable dans $\bC$ et $C=PDP^{-1}$. On a \[\begin{pmatrix}
               w_{n,4}^{B} \\
							 w_{n,4}^{-B}  \\
							 w_{n-1,4}^{B}  \\
							 w_{n-1,4}^{-B}  \\
							\end{pmatrix}= PD^{n-3}P^{-1}\begin{pmatrix}
               w_{3,4}^{B} \\
							 w_{3,4}^{-B}  \\
							 w_{2,4}^{B}  \\
							 w_{2,4}^{-B}  \\
							\end{pmatrix}.\]
\\
\\Puisque $w_{3,4}^{\id}=w_{3,4}^{-\id}=1$, $w_{2,4}^{\id}=0$ et $w_{2,4}^{-\id}=1$, on obtient :
\[\left\{
\begin{array}{rcr}
w_{n,4}^{+} &=& \frac{4^{n-2}}{3}-\frac{(-2)^{n-3}}{3}-i^{n-2} 2^{n-4}-(-i)^{n-2}2^{n-4}; \\
w_{n,4}^{-} &=& \frac{4^{n-2}}{3}-\frac{(-2)^{n-3}}{3}+i^{n-2} 2^{n-4}+(-i)^{n-2}2^{n-4}.
\end{array}
\right.\] 

\noindent On considère $2^{n-4}(i^{n-2}+(-1)^{n-2}i^{n-2})$. On distingue trois cas : 
\begin{itemize}
\item si $n$ est impair alors $n-2$ est impair et $2^{n-4}(i^{n-2}+(-1)^{n-2}i^{n-2})=2^{n-4}(i^{n-2}-i^{n-2})=0$;
\item si $n=2m$ avec $m$ pair. $n-2$ est pair non divisible par 4 donc on a l'égalité : \[2^{n-4}(i^{n-2}+(-1)^{n-2}i^{n-2})=2^{n-4}(-1+(-1)))=-2^{n-3};\]
\item si $n=2m$ avec $m$ impair. $n-2$ est divisible par 4 donc on dispose de l'égalité : \[2^{n-4}(i^{n-2}+(-1)^{n-2}i^{n-2})=2^{n-4}(1+1)=2^{n-3}.\]

\end{itemize}

\noindent En reportant dans les formules de $w_{n,4}^{+}$ et $w_{n,4}^{-}$, on obtient : 

\begin{itemize}
\item si $n$ est impair alors on a l'égalité suivante : 
\[w_{n, 4}^{+}=w_{n, 4}^{-}=\frac{4^{n-2}-2^{n-3}}{3};\]

\item si $n=2m$ avec $m$ pair, on a :
\[w_{n,4}^{+}=\frac{4^{n-2}+2^{n-1}}{3}~~~{\rm et}~~~w_{n,4}^{-}=\frac{4^{n-2}- 2^{n-2}}{3};\]

\item si $n=2m$ avec $m$ impair, on a :
\[w_{n,4}^{+}=\frac{4^{n-2}- 2^{n-2}}{3}~~~{\rm et}~~~w_{n,4}^{-}=\frac{4^{n-2}+2^{n-1}}{3}.\]

\end{itemize}
\end{proof}

\begin{remark}
{\rm On peut utiliser les éléments précédents pour trouver des formules de comptage concernant les générateurs du groupe modulaire.
\\i) Si $B=S=\begin{pmatrix}
              \overline{0} & \overline{-1} \\
							 \overline{1} & \overline{0}
							\end{pmatrix}$ alors $w_{n,4}^{-S}=w_{n-1,4}^{S}+w_{n-1,4}^{-S}+6w_{n-2,4}^{S}+2w_{n-2,4}^{-S}$. Comme $w_{2,4}^{S}=w_{2,4}^{-S}=0$ et $w_{3,4}^{S}=0$ $w_{3,4}^{-S}=4$, on a les formules générales suivantes :
\[w_{n,4}^{S}=\frac{4^{n-2}-(-2)^{n-2}}{3}+2^{n-3}i^{n-3}((-1)^{n-2}-1)~~~~w_{n,4}^{-S}=\frac{4^{n-2}-(-2)^{n-2}}{3}+2^{n-3}i^{n-3}(1+(-1)^{n-3}).\]

\noindent ii) Si $B=T=\begin{pmatrix}
              \overline{1} & \overline{1} \\
							 \overline{0} & \overline{1}
							\end{pmatrix}$ alors $w_{n,4}^{-T}=w_{n-1,4}^{T}+w_{n-1,4}^{-T}+6w_{n-2,4}^{T}+2w_{n-2,4}^{-T}$.Comme $w_{2,4}^{T}=0$, $w_{2,4}^{-T}=1$ et $w_{3,4}^{T}=w_{3,4}^{-T}=1$, on a	les formules générales suivantes :
\[w_{n,4}^{T}=\frac{4^{n-2}-(-2)^{n-3}}{3}+2^{n-4}i^{n-2}((-1)^{n-3}-1)~~~~w_{n,4}^{-T}=\frac{4^{n-2}-(-2)^{n-3}}{3}+2^{n-4}i^{n-2}(1+(-1)^{n-2}).\]				
}

\noindent Cela donne notamment les valeurs numériques ci-dessous :

\begin{center}
\begin{tabular}{|c|c|c|c|c|c|c|c|c|c|}
\hline
  $n$     & 2 & 3 & 4 & 5 & 6 & 7 & 8 & 9 & 10  \rule[-7pt]{0pt}{18pt} \\
  \hline
  $w_{n,4}^{S}$ & 0 & 0 & 4 & 32 & 80 & 320 & 1344  & 5632 & 21~760   \rule[-7pt]{0pt}{18pt} \\
	\hline
  $w_{n,4}^{-S}$ & 0 & 4 & 4 & 16 & 80 & 384 & 1344 & 5376 & 21~760   \rule[-7pt]{0pt}{18pt} \\
	\hline
  $w_{n,4}^{T}$ & 0 & 1 & 8 & 20 & 80 & 336 & 1408 & 5440 & 21~760    \rule[-7pt]{0pt}{18pt} \\
	\hline
  $w_{n,4}^{-T}$  & 1 & 1 & 4 & 20 & 96 & 336 & 1344 & 5440 & 22~016 \rule[-7pt]{0pt}{18pt} \\
	\hline
	
\end{tabular}
\end{center}

\end{remark}

\subsection{Application au calcul du nombre de solutions pour d'autres valeurs de $N$}

Soit $N$ un entier naturel supérieur à 2. On note :
\begin{itemize}
\item $\tilde{\Omega}_{n}(N)=\{(\overline{a_{1}},\ldots,\overline{a_{n}}) \in (\bZ/N\bZ)^{n},~M_{n}(\overline{a_{1}},\ldots,\overline{a_{n}})=\id\}$;
\item $\tilde{\Xi}_{n}(N)=\{(\overline{a_{1}},\ldots,\overline{a_{n}}) \in (\bZ/N\bZ)^{n},~M_{n}(\overline{a_{1}},\ldots,\overline{a_{n}})=-\id\}$.
\end{itemize}

Comme les coefficients de $M_{n}(\overline{a_{1}},\ldots,\overline{a_{n}})$ sont des polynômes en $\overline{a_{1}},\ldots,\overline{a_{n}}$, on a avec le lemme chinois :

\begin{proposition}
\label{414}

Soit $N=p_{1}^{\alpha_{1}}\ldots p_{r}^{\alpha_{r}}$ avec les $p_{i}$ des nombres premiers deux à deux distincts et les $\alpha_{i}$ des entiers naturels non nuls. Soit $n$ un entier naturel supérieur à 2. Les applications 
\[\begin{array}{cccc} 
\tau : & \tilde{\Omega}_{n}(N) & \longrightarrow & \tilde{\Omega}_{n}(p_{1}^{\alpha_{1}}) \times \ldots \times \tilde{\Omega}_{n}(p_{r}^{\alpha_{r}}) \\
 & (a_{1}+N\bZ,\ldots,a_{n}+N\bZ) & \longmapsto & ((a_{1}+p_{1}^{\alpha_{1}}\bZ,\ldots,a_{n}+p_{1}^{\alpha_{1}}\bZ),\ldots,(a_{1}+p_{r}^{\alpha_{r}}\bZ,\ldots,a_{n}+p_{r}^{\alpha_{r}}\bZ))  \\
\end{array}\]
et
\[\begin{array}{cccc} 
\delta : & \tilde{\Xi}_{n}(N) & \longrightarrow & \tilde{\Xi}_{n}(p_{1}^{\alpha_{1}}) \times \ldots \times \tilde{\Xi}_{n}(p_{r}^{\alpha_{r}}) \\
 & (a_{1}+N\bZ,\ldots,a_{n}+N\bZ) & \longmapsto & ((a_{1}+p_{1}^{\alpha_{1}}\bZ,\ldots,a_{n}+p_{1}^{\alpha_{1}}\bZ),\ldots,(a_{1}+p_{r}^{\alpha_{r}}\bZ,\ldots,a_{n}+p_{r}^{\alpha_{r}}\bZ))  \\
\end{array}\]

\noindent sont des bijections.
\\
\\En particulier, $\left|\tilde{\Omega}_{n}(N)\right|= \prod_{i=1}^{r} \left| \tilde{\Omega}_{n}(p_{i}^{\alpha_{i}}) \right|$ et $\left|\tilde{\Xi}_{n}(N)\right|= \prod_{i=1}^{r} \left| \tilde{\Xi}_{n}(p_{i}^{\alpha_{i}}) \right|$.

\end{proposition}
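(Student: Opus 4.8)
Le plan est d'exploiter le caractère « fonctoriel » de la construction $M_{n}$ vis-à-vis des morphismes d'anneaux. Le point de départ serait l'observation suivante : pour tout morphisme d'anneaux unitaires $\psi : A \to A'$ et tout $(a_{1},\ldots,a_{n}) \in A^{n}$, on a l'égalité $\psi(M_{n}(a_{1},\ldots,a_{n}))=M_{n}(\psi(a_{1}),\ldots,\psi(a_{n}))$, où $\psi$ est appliqué coefficient par coefficient aux matrices $2 \times 2$. Cela résulte de ce que les coefficients des matrices $\begin{pmatrix} a & -1 \\ 1 & 0 \end{pmatrix}$ sont des polynômes à coefficients entiers en $a$, que le produit matriciel est lui aussi donné par des formules polynomiales à coefficients entiers, et que $\psi$ respecte $+$, $\times$, $0$, $1$ et $-1$ ; en particulier $\psi(\pm\id)=\pm\id$.

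Ensuite, j'appliquerais ceci aux morphismes de réduction. Le lemme chinois fournit un isomorphisme d'anneaux $\bZ/N\bZ \xrightarrow{\sim} \bZ/p_{1}^{\alpha_{1}}\bZ \times \ldots \times \bZ/p_{r}^{\alpha_{r}}\bZ$ dont les composantes sont les morphismes de réduction $\rho_{i} : \bZ/N\bZ \to \bZ/p_{i}^{\alpha_{i}}\bZ$. Si $M_{n}(\overline{a_{1}},\ldots,\overline{a_{n}})=\id$ sur $\bZ/N\bZ$, alors en appliquant chaque $\rho_{i}$ on obtient $M_{n}(\rho_{i}(\overline{a_{1}}),\ldots,\rho_{i}(\overline{a_{n}}))=\id$ sur $\bZ/p_{i}^{\alpha_{i}}\bZ$ ; cela montre que $\tau$ est bien définie et à valeurs dans $\tilde{\Omega}_{n}(p_{1}^{\alpha_{1}}) \times \ldots \times \tilde{\Omega}_{n}(p_{r}^{\alpha_{r}})$, et de même pour $\delta$ en remplaçant $\id$ par $-\id$.

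Pour la bijectivité, l'idée est de remarquer que $\tau$ n'est que la restriction, à la partie $\tilde{\Omega}_{n}(N)$, de la bijection $(\bZ/N\bZ)^{n} \xrightarrow{\sim} (\bZ/p_{1}^{\alpha_{1}}\bZ)^{n} \times \ldots \times (\bZ/p_{r}^{\alpha_{r}}\bZ)^{n}$ déduite du lemme chinois (modulo une réindexation des coordonnées). Comme on vient de voir que cette bijection envoie $\tilde{\Omega}_{n}(N)$ dans $\tilde{\Omega}_{n}(p_{1}^{\alpha_{1}}) \times \ldots \times \tilde{\Omega}_{n}(p_{r}^{\alpha_{r}})$, il reste à établir la surjectivité : étant donné $(\overline{a_{1}},\ldots,\overline{a_{n}}) \in (\bZ/N\bZ)^{n}$ se réduisant modulo chaque $p_{i}^{\alpha_{i}}$ en un élément de $\tilde{\Omega}_{n}(p_{i}^{\alpha_{i}})$, on a $\rho_{i}(M_{n}(\overline{a_{1}},\ldots,\overline{a_{n}}))=\id=\rho_{i}(\id)$ pour tout $i$ ; comme l'application coefficient par coefficient $M_{2}(\bZ/N\bZ) \to \prod_{i} M_{2}(\bZ/p_{i}^{\alpha_{i}}\bZ)$ est injective (lemme chinois appliqué aux quatre coefficients), on en déduit $M_{n}(\overline{a_{1}},\ldots,\overline{a_{n}})=\id$, donc $(\overline{a_{1}},\ldots,\overline{a_{n}}) \in \tilde{\Omega}_{n}(N)$. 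On conclut de même pour $\delta$, et les égalités de cardinaux en découlent aussitôt via $|X \times Y|=|X|\cdot|Y|$. L'unique point réclamant un peu d'attention est la compatibilité de $M_{n}$ avec les morphismes de réduction et l'identification exacte de $\tau$ et $\delta$ à des restrictions de la bijection du lemme chinois ; tout le reste est purement formel.
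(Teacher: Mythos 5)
Votre démonstration est correcte et suit exactement la même idée que le texte, qui se contente d'invoquer en une ligne le fait que les coefficients de $M_{n}(\overline{a_{1}},\ldots,\overline{a_{n}})$ sont des polynômes à coefficients entiers en les $\overline{a_{i}}$ combiné au lemme chinois. Vous ne faites qu'expliciter soigneusement (fonctorialité de $M_{n}$, surjectivité via l'injectivité coefficient par coefficient) ce que l'article laisse implicite ; rien à redire.
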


Dans le cas où la décomposition de $N$ en nombres premiers ne contient pas de facteur carré, ou contient seulement 4 comme facteur carré, on peut utiliser les théorèmes \ref{25}, \ref{36} et \ref{26} pour obtenir le nombre de $\lambda$-quiddités de \eqref{p} sur $\bZ/N\bZ$. En effet, en combinant ce résultat avec les théorèmes \ref{25}, \ref{36} et \ref{26}, on obtient le corollaire \ref{261}.

\subsection{Applications numériques}

On donne ci-dessous quelques valeurs numériques pour $w_{n,N}^{+}$ et $w_{n,N}^{-}$. On commence par donner des valeurs de $w_{n,N}^{-}$ :

\hfill\break

\begin{center}
\begin{tabular}{|c|c|c|c|c|c|c|c|c|c|}
\hline
  \multicolumn{1}{|c|}{\backslashbox{$n$}{\vrule width 0pt height 1.25em$N$}}     & 2 & 3 & 4 & 5 & 6 & 7 & 10 & 11 & 12  \rule[-7pt]{0pt}{18pt} \\
  \hline
  4 & 3 & 2 & 4 & 4 & 6 & 6 & 12 & 10 & 8   \rule[-7pt]{0pt}{18pt} \\
	\hline
  5 & 5 & 10 & 20 & 26 & 50 & 50 & 130 & 122 & 200   \rule[-7pt]{0pt}{18pt} \\
	\hline
  6  & 11 & 35 & 96 & 149 & 385 & 391 & 1639 & 1451 & 3360    \rule[-7pt]{0pt}{18pt} \\
	\hline
  7  & 2 & 91 & 336 & 651 & 1911 & 2451 & 13~671 & 14~763 & 30~576 \rule[-7pt]{0pt}{18pt} \\
  \hline
	8  & 43 & 260 & 1344 & 3224 & 11~180 & 17~100 & 138~632 & 162~260 & 349~440  \rule[-7pt]{0pt}{18pt} \\
	\hline
	
\end{tabular}
\end{center}

\hfill\break

\noindent On donne maintenant des valeurs de $w_{n,N}^{+}$ :

\hfill\break

\begin{center}
\begin{tabular}{|c|c|c|c|c|c|c|c|c|c|}
\hline
  \multicolumn{1}{|c|}{\backslashbox{$n$}{\vrule width 0pt height 1.25em$N$}}     & 2 & 3 & 4 & 5 & 6 & 7 & 10 & 11 & 12  \rule[-7pt]{0pt}{18pt} \\
  \hline
  4 & 3 & 5 & 8 & 9 & 15 & 13 & 27 & 21 & 40   \rule[-7pt]{0pt}{18pt} \\
	\hline
  5 & 5 & 10 & 20 & 26 & 50 & 50 & 130 & 122 & 200   \rule[-7pt]{0pt}{18pt} \\
	\hline
  6  & 11 & 26 & 80 & 124 & 286 & 342 & 1364 & 130 & 2080   \rule[-7pt]{0pt}{18pt} \\
	\hline
  7  & 21 & 91 & 336 & 651 & 1911 & 2451 & 13~671 & 14~763 & 30~576 \rule[-7pt]{0pt}{18pt} \\
  \hline
	8  & 43 & 287 & 1408 & 3349 & 12~341 & 17~443 & 148~307 & 163~591 & 404~096     \rule[-7pt]{0pt}{18pt} \\
	\hline
	
\end{tabular}
\end{center}

\hfill\break

\noindent Notons que les valeurs de $w_{n,4}^{+}$ et $w_{n,4}^{-}$ sont différentes de celles de $u_{n,4}^{+}$ et $u_{n,4}^{-}$ (voir \cite{Mo2} section 3.8).

\section{Quelques éléments sur le nombre de classe d'équivalence des solutions irréductibles}
\label{preuve27}

L'objectif de cette section est de donner quelques résultats concernant la suite $(v_{N})$ qui compte le nombre de classes d'équivalence de solutions irréductibles de \eqref{p} sur $\bZ/N\bZ$.

\subsection{Démonstration du théorème \ref{27}}

\noindent On s'intéresse ici à des propriétés asymptotiques de cette suite. Avant de démontrer le théorème évoqué dans l'introduction, on donne le résultat préliminaire suivant : 

\begin{lemma}[\cite{M1}, propositions 3.4 et 3.8]
\label{52}

Une solution de \eqref{p} de taille 4 sur $\bZ/N\bZ$ est réductible si et seulement si elle contient $\overline{1}$ ou $\overline{-1}$.

\end{lemma}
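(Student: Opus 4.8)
The statement characterizes reducible solutions of size $4$ over $\bZ/N\bZ$. By Definition \ref{24}, a size-$4$ solution $(c_1,c_2,c_3,c_4)$ is reducible precisely when it is equivalent (via $\sim$) to a sum $(a_1,a_2,a_3)\oplus(b_1,b_2,b_3)$ of two solutions, each of size $3$; indeed the only admissible decomposition has $m=l=3$ since $m+l-2=4$ and $m,l\geq 3$. So the first step is to recall from the Proposition in Section \ref{quid} that the size-$3$ solutions of \eqref{p} over $\bZ/N\bZ$ are exactly $(\overline{1},\overline{1},\overline{1})$ (a solution of $M_3=-\id$) and $(\overline{-1},\overline{-1},\overline{-1})$ (a solution of $M_3=+\id$), noting that over $\bZ/N\bZ$ with $N>2$ these are genuinely distinct.

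\emph{Reducible $\Rightarrow$ contains $\overline{1}$ or $\overline{-1}$.} Suppose $(c_1,c_2,c_3,c_4)\sim(a_1,a_2,a_3)\oplus(b_1,b_2,b_3)$ with both triples size-$3$ solutions. Using $(x_1,x_2,x_3)\oplus(y_1,y_2,y_3)=(x_1+y_3,\,x_2+x_3\text{-entry reorganized})$—more precisely, by Definition \ref{22}, $(a_1,a_2,a_3)\oplus(b_1,b_2,b_3)=(a_1+b_3,a_2,a_3+b_1,b_2)$. Now $(a_1,a_2,a_3)\in\{(\overline{1},\overline{1},\overline{1}),(\overline{-1},\overline{-1},\overline{-1})\}$, so $a_2=\pm\overline{1}$, and likewise $b_2=\pm\overline{1}$. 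Hence the sum $(a_1+b_3,a_2,a_3+b_1,b_2)$ has its second \emph{and} fourth components in $\{\overline{1},\overline{-1}\}$. Since $\sim$ only permutes components cyclically (possibly after reversal), $(c_1,c_2,c_3,c_4)$ also contains an entry equal to $\overline{1}$ or $\overline{-1}$.

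\emph{Contains $\overline{1}$ or $\overline{-1}$ $\Rightarrow$ reducible.} Up to $\sim$ we may assume $c_1\in\{\overline{1},\overline{-1}\}$. If $c_1=\overline{1}$: we want to write $(c_1,c_2,c_3,c_4)=(a_1,a_2,a_3)\oplus(\overline{-1},\overline{-1},\overline{-1})$ with $(a_1,a_2,a_3)$ a size-$3$ solution. Matching components via $(a_1+\overline{-1}\cdot\text{[last entry }\overline{-1}],a_2,a_3+\overline{-1},\overline{-1})$, we need $c_4=\overline{-1}$... this does not immediately match, so instead I would decompose using the \emph{other} order or adjust which triple sits on the left. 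The clean approach: since $(c_1,c_2,c_3,c_4)$ is a size-$4$ solution with $c_1=\overline{1}$, apply Lemma \ref{form} ii) backwards — $M_3(x,1,y)=M_2(x-1,y-1)$ shows that a size-$4$ solution containing $\overline{1}$ reduces, via the operation $\oplus$, by "absorbing" a $(\overline{1},\overline{1},\overline{1})$ block at that position; the precise bookkeeping uses that $\oplus$ with a size-$3$ solution preserves the solution property (stated after Definition \ref{22}) together with the explicit size-$3$ solution list. Similarly $c_1=\overline{-1}$ uses $M_3(x,-1,y)=-M_2(x+1,y+1)$ and the block $(\overline{-1},\overline{-1},\overline{-1})$.

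\emph{Main obstacle.} The genuinely delicate point is the $(\Leftarrow)$ direction: one must exhibit, for a size-$4$ solution containing $\overline{1}$ (resp. $\overline{-1}$), an explicit decomposition $(a_1,a_2,a_3)\oplus(b_1,b_2,b_3)$ with \emph{both} factors of size $\geq 3$ and each a solution — and in particular verify the residual triple $(a_1,a_2,a_3)$ is indeed a size-$3$ solution, hence forced to be $(\overline{\pm1},\overline{\pm1},\overline{\pm1})$, which constrains the entries of the original $4$-tuple. One should double-check against the explicit description of size-$4$ solutions from the Proposition in Section \ref{quid} (those of the form $(-a,b,a,-b)$ with $ab=0$ and $(a,b,a,b)$ with $ab=2$): a size-$4$ solution containing $\overline{1}$ or $\overline{-1}$ must be one of these with one entry $=\pm\overline{1}$, and for each such tuple one writes down the decomposition by hand. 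The remaining work is routine verification using Definition \ref{22} and the stability of solutions under $\oplus$.
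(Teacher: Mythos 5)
Your forward direction is correct and is essentially the paper's argument: $m+l-2=4$ with $m,l\ge 3$ forces $m=l=3$, the size-$3$ solutions are $(\overline{\epsilon},\overline{\epsilon},\overline{\epsilon})$ with $\epsilon=\pm 1$, and since $(a_1,a_2,a_3)\oplus(b_1,b_2,b_3)=(a_1+b_3,\,a_2,\,a_3+b_1,\,b_2)$ leaves $a_2$ and $b_2$ untouched, the sum contains $\overline{1}$ or $\overline{-1}$; equivalence under $\sim$ preserves this. (The only point worth making explicit is why the left factor is also a solution: this follows from the stability of $\lambda$-quiddités under $\oplus$ quoted after the définition \ref{22}.)

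The converse, however, is left genuinely incomplete. You observe that $(a_1,a_2,a_3)\oplus(\overline{-1},\overline{-1},\overline{-1})$ fails to match a tuple with $c_1=\overline{1}$, and then retreat to ``apply Lemma \ref{form} ii) backwards'' and ``the remaining work is routine verification'' without ever producing a decomposition. Two things go wrong. First, the mismatch is only a bookkeeping artefact: in the sum above the entry $b_2$ lands in position $4$, so up to the cyclic permutation allowed by $\sim$ you should place the distinguished entry $\overline{\epsilon}$ in the \emph{fourth} slot and pair it with the block of the \emph{same} sign. The whole converse is then the one-line identity
\[(\overline{c_1},\overline{c_2},\overline{c_3},\overline{\epsilon})=(\overline{c_1-\epsilon},\overline{c_2},\overline{c_3-\epsilon})\oplus(\overline{\epsilon},\overline{\epsilon},\overline{\epsilon}),\]
checked directly from the définition \ref{22}; no appeal to Lemma \ref{form} is needed. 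Second, the ``main obstacle'' you describe is not one: the définition \ref{24} only requires the \emph{right} factor $(b_1,\ldots,b_l)$ to be a solution, and $(\overline{\epsilon},\overline{\epsilon},\overline{\epsilon})$ is one; the left factor need not be examined (and is in any case automatically a solution by the stability property), so no case analysis on the explicit list of size-$4$ solutions is required. With that explicit decomposition inserted, your argument closes and coincides with the paper's proof.
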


\begin{proof}

Soit $(\overline{a_{1}},\overline{a_{2}},\overline{a_{3}},\overline{a_{4}})$ une solution de \eqref{p} sur $\bZ/N\bZ$. 

Si $(\overline{a_{1}},\overline{a_{2}},\overline{a_{3}},\overline{a_{4}})$ est réductible alors $(\overline{a_{1}},\overline{a_{2}},\overline{a_{3}},\overline{a_{4}})$ est équivalent à la somme d'un $m$-uplet solution de \eqref{p} avec un $l$-uplet solution de \eqref{p} avec $m, l \geq 3$. On a $m+l-2=4$ donc $m+l=6$ et comme $m, l \geq 3$ on a nécessairement $m=l=3$. Comme les solutions de \eqref{p} de taille 3 contiennent $\overline{1}$ ou $\overline{-1}$, une solution réductible de taille 4 contient $\overline{1}$ ou $\overline{-1}$.

Si $(\overline{a_{1}},\overline{a_{2}},\overline{a_{3}},\overline{a_{4}})$ est une solution de \eqref{p} contenant $\overline{1}$ ou $\overline{-1}$. Il existe $\epsilon$ dans $\{-1, 1\}$ il existe $i$ dans $[\![1;4]\!]$ tels que $\overline{a_{i}}=\overline{\epsilon}$. Quitte à effectuer une permutation circulaire, on peut supposer $i=4$. On a :\[(\overline{a_{1}},\overline{a_{2}},\overline{a_{3}},\overline{a_{4}})=(\overline{a_{1}-\epsilon},\overline{a_{2}},\overline{a_{3}-\epsilon}) \oplus (\overline{\epsilon},\overline{\epsilon},\overline{\epsilon}).\]
Comme $(\overline{\epsilon},\overline{\epsilon},\overline{\epsilon})$ est solution de \eqref{p}, $(\overline{a_{1}},\overline{a_{2}},\overline{a_{3}},\overline{a_{4}})$ est une solution réductible.
\end{proof}

\noindent On peut maintenant effectuer la preuve annoncée.

\begin{proof}[Démonstration du théorème \ref{27}]

i) Posons $N=2^{2m}$ avec $m \geq 2$ et $n=2m$. On va compter le nombre de solutions irréductibles de \eqref{p} que l'on connaît afin de minorer celui-ci.
\\
\\On s'intéresse aux solutions du type $(\overline{a},\overline{b},\overline{-a},\overline{-b})$ avec $\overline{ab}=\overline{0}$. 
\begin{itemize}
\item On a déjà les solutions $(\overline{a},\overline{0},\overline{-a},\overline{0})$ avec $\overline{a} \neq \pm \overline{1}$ qui donnent $\frac{N}{2}$ classes d'équivalence. En effet, $(\overline{a},\overline{0},\overline{-a},\overline{0}) \sim (\overline{b},\overline{0},\overline{-b},\overline{0})$ si et seulement si $\overline{a}=\pm \overline{b}$. Donc, pour $0 \leq a \leq \frac{N}{2}$, $a \neq 1$, on a des classes de solutions irréductibles deux à deux distinctes.
\\
\item On a aussi les solutions $(\overline{2^{m+k}a},\overline{2^{m-k}b},\overline{-2^{m+k}a},\overline{-2^{m-k}b})$ avec $0 \leq a \leq 2^{m-k}$ et $0 \leq b \leq 2^{m+k}$, $a, b$ impairs et $1 \leq k \leq m-1$. Pour chaque $1 \leq k \leq m-1$, il y a $2^{m-k-1}2^{m+k-1}=2^{n-2}$ solutions de ce type. Par ailleurs, ces solutions sont irréductibles car elles ne contiennent pas $\pm \overline{1}$. Notons également que ces solutions ne sont pas équivalentes aux solutions évoquées précédemment car elles ne contiennent pas $\overline{0}$. Deux solutions de ce type équivalentes utilisent nécessairement la même valeur de $k$. Soit $1 \leq k \leq m-1$. Soient $0 \leq a \leq 2^{m-k}$ et $0 \leq b \leq 2^{m+k}$ avec $a, b$ impair, on a au plus quatre solutions de ce type appartenant à la même classe : 
\begin{itemize}
\item $(\overline{2^{m+k}a},\overline{2^{m-k}b},\overline{-2^{m+k}a},\overline{-2^{m-k}b})$;
\item $(\overline{2^{m+k}(2^{m-k}-a)},\overline{2^{m-k}b},\overline{-2^{m+k}(2^{m-k}-a)},\overline{-2^{m-k}b})$; 
\item $(\overline{2^{m+k}a},\overline{2^{m-k}(2^{m+k}-b)},\overline{-2^{m+k}a},\overline{-2^{m-k}(2^{m+k}-b)})$;\item $(\overline{2^{m+k}(2^{m-k}-a)},\overline{2^{m-k}(2^{m+k}-b)},\overline{-2^{m+k}(2^{m-k}-a)},\overline{-2^{m-k}(2^{m+k}-b)})$. 
\end{itemize}

\end{itemize}

\noindent Ainsi, on a au moins $\sum_{k=1}^{m-1} 2^{n-4}=(m-1)2^{n-4}=\frac{{\rm ln}(N)}{32{\rm ln}(2)}N-\frac{N}{16}$ classes de solutions irréductibles deux à deux distinctes de cette forme. Donc, \[v_{N} \geq \frac{{\rm ln}(N)}{32{\rm ln}(2)}N-\frac{N}{16}+\frac{N}{2}=\frac{{\rm ln}(N)}{32{\rm ln}(2)}N+\frac{7N}{16}.\] En particulier, 
\[\frac{v_{N}}{N\sqrt{{\rm ln}(N)}} \geq \frac{\sqrt{{\rm ln}(N)}}{32{\rm ln}(2)}+\frac{7}{16\sqrt{{\rm ln}(N)}}.\]

\noindent Comme il y a une infinité d'entiers de la forme $2^{2m}$, on peut extraire une sous-suite de $\left(\frac{v_{N}}{N\sqrt{{\rm ln}(N)}}\right)$ non bornée (puisque le terme de droite de l'inégalité tend vers $+\infty$). Ainsi, $\left(\frac{v_{N}}{N\sqrt{{\rm ln}(N)}}\right)$ n'est pas bornée.
\\
\\ ii) Supposons par l'absurde qu'il existe $l \in \mathbb{R}^{+}$ tel que $(v_{N}-lN)$ est bornée. Il existe $K$ tel que, $\forall N \geq 2$, $v_{N}-lN \leq K$. En particulier, $\frac{v_{N}}{N\sqrt{{\rm ln}(N)}} \leq \frac{K}{N\sqrt{{\rm ln}(N)}}+\frac{l}{\sqrt{{\rm ln}(N)}} \underset{N\to+\infty}{\longrightarrow} 0$. Donc, $\left(\frac{v_{N}}{N\sqrt{{\rm ln}(N)}}\right)$ est bornée, ce qui est absurde.
\end{proof}

\begin{remark}
On connaît d'autres solutions irréductibles de $(E_{2^{2m}})$. On dispose notamment pour ces équations d'une classification de toutes les solutions monomiales minimales irréductibles (voir \cite{M5} Théorème 2.6).
\end{remark}

\subsection{Calcul numérique des valeurs de $v_{N}$ pour les petites valeurs de $N$}

On connaît déjà l'ensemble des solutions irréductibles de \eqref{p} lorsque $A=\bZ/N\bZ$ pour $2 \leq N \leq 6$ (voir \cite{M1} Théorème 2.5), ce qui nous permet d'avoir les valeurs de $v_{N}$ pour ces mêmes entiers. À l'aide d'un programme informatique, fourni dans l'annexe \ref{A}, on étend cette connaissance aux valeurs de $v_{N}$ pour $N \leq 16$.
La longueur des plus longues solutions irréductibles est notée $\ell_N$.

\begin{center}
\begin{tabular}{|c|c|c|c|c|c|c|c|c|c|c|c|c|c|c|c|}
\hline
$N$     & 2 & 3 & 4 & 5 & 6 & 7 & 8 & 9 & 10 & 11 & 12 & 13 & 14 & 15 & 16  \rule[-7pt]{0pt}{18pt} \\
\hline
$v_{N}$ & 2 & 3 & 6 & 9 & 10 & 42 & 48 & 229 & 203 & 25~686 & 1161 & 2~913~226 & 90~748 & 14~346~911 & 8~259~494  \rule[-7pt]{0pt}{18pt} \\
\hline
$\ell_N$ & 4 & 4 & 4 & 6 & 6 & 9 & 8 & 12 & 12 & 19 & 15 & 25 & 20 & 26 & 24
\rule[-7pt]{0pt}{18pt} \\
  \hline
\end{tabular}
\end{center}

\hfill\break

\noindent On constate, en particulier, que $v_{10} < v_{9}$. Donc, la suite $(v_{N})_{N \geq 2}$ n'est pas croissante.
\\
\\ \noindent {\bf Remerciements}.
Les auteurs remercient Sophie Morier-Genoud pour sa disponibilité et son aide précieuse. Ils remercient également le rapporteur pour ses commentaires et ses nombreuses propositions de modifications qui ont été utilisés pour améliorer ce texte d'une façon notable.

\appendix

\section{Démonstration du théorème \ref{25} à l'aide d'une bijection}
\label{ann_preuve25}
\label{chap42}

Nous allons redémontrer le théorème \ref{25} en donnant une bijection entre l'ensemble des solutions et un ensemble dont on pourra calculer le cardinal. Cette bijection étant pratiquement la même que celle donnée dans \cite{Mo2} (seules quelques signes sont à changer), on se contente de rappeler les grandes lignes de sa construction. Cette dernière sera effectuée sur un corps commutatif $\mathbb{K}$ quelconque. En revanche, les éléments de comptage seront effectués sur $\mathbb{F}_{q}$. Ici, $p$ est un nombre premier impair, $q$ une puissance de $p$ et $n$ un entier naturel non nul pair de la forme $n=2m$. 

\subsection{Les ensembles $\mathcal{C}_{n}$}

On commence par introduire un certain nombre de notations utiles pour la suite :
\begin{itemize}
\item $\Omega_{n}(\mathbb{K})$ est l'ensemble des solutions de taille $n$ de $M_{n}(a_{1},\ldots,a_{n})= \id$ sur $\mathbb{K}$;
\item $\Xi_{n}(\mathbb{K})$ est l'ensemble des solutions de taille $n$ de $M_{n}(a_{1},\ldots,a_{n})= -\id$ sur $\mathbb{K}$;
\item $\mathcal{D(\mathbb{K})}$ désigne l'ensemble des droites vectorielles de $\mathbb{K}^{2}$; 
\item $r \in \bM$, $\mathcal{C}_{r}(\mathbb{K})=\{(d_{1},\ldots,d_{r}) \in \mathcal{D(\mathbb{K})}^{r}, d_{r} \neq d_{1}~{\rm et}~d_{i} \neq d_{i+1}~{\rm pour}~i \in [\![1;r-1]\!]\}$.
\item $\Omega_{n}(q)=\Omega_{n}(\mathbb{F}_{q})$, $\Xi_{n}(q)=\Xi_{n}(\mathbb{F}_{q})$ et $\mathcal{C}_{r}(q)=\mathcal{C}_{r}(\mathbb{F}_{q})$.
\\
\end{itemize}

\noindent On va maintenant définir deux sous-ensembles de $\mathcal{C}_{2m}(\mathbb{K})$ qui nous seront très utiles pour la suite. Si $(d_{1},\ldots,d_{2m}) \in \mathcal{C}_{n}(\mathbb{K})$, on pose $d_{j+n}=d_{j}$. Soit $(d_{1},\ldots,d_{2m}) \in \mathcal{C}_{n}(\mathbb{K})$. Pour tout $i$ dans $[\![1;n]\!]$, il existe un $v_{i}$ dans $\mathbb{K}^{2}$ tel que $d_{i}={\rm vect}(v_{i})$. Notons $V=(v_{1},\ldots,v_{n})$. Soit $\mathcal{B}$ une base de $\mathbb{K}^{2}$. Comme $d_{i} \neq d_{i+1}$ pour $i \in [\![1;n]\!]$, ${\rm det}_{\mathcal{B}}(v_{i},v_{i+1}) \neq 0$. On peut donc considérer le quotient suivant :

\[f(d_{1},\ldots,d_{2m}):=\frac{{\rm det}_{\mathcal{B}}(v_{1},v_{2}){\rm det}_{\mathcal{B}}(v_{3},v_{4})\ldots{\rm det}_{\mathcal{B}}(v_{2m-1},v_{2m})}{{\rm det}_{\mathcal{B}}(v_{2},v_{3}){\rm det}_{\mathcal{B}}(v_{4},v_{5})\ldots{\rm det}_{\mathcal{B}}(v_{2m},v_{1})}.\]
\[~\]
\noindent Notons que $f(d_{1},\ldots,d_{2m})$ ne dépend ni du choix de la base ni du choix des vecteurs directeurs. On peut maintenant introduire les deux ensembles ci-dessous (voir \cite{Mo2} section 2.2) :
\[\mathcal{C}_{2m}(\mathbb{K})^{+}=\{(d_{1},\ldots,d_{2m}) \in \mathcal{C}_{2m}(\mathbb{K}), f(d_{1},\ldots,d_{2m})=-1\},\]
\[\mathcal{C}_{2m}(\mathbb{K})^{-}=\{(d_{1},\ldots,d_{2m}) \in \mathcal{C}_{2m}(\mathbb{K}), f(d_{1},\ldots,d_{2m})=1\}.\]

\noindent On peut définir de façon naturelle une action de $\mathbb{K}^{*}$ sur $\Omega_{n}(\mathbb{K})$ et une action de ${\rm PGL}(\mathbb{K}^{2})$ sur $\mathcal{C}_{2m}(\mathbb{K})^{-}$ (les détails de ces dernières sont données dans les sections qui suivent). Le but est de définir une bijection $\sigma$ entre les orbites de ces actions, c'est-à-dire de montrer que :
\[\Omega_{2m}(\mathbb{K})/\mathbb{K}^{*} \cong \mathcal{C}_{2m}(\mathbb{K})^{-}/{\rm PGL}(\mathbb{K}^{2}).\]

\noindent On va maintenant donner quelques formules de comptage sur $\mathbb{F}_{q}$. On dispose notamment de la formule de dénombrement suivante :

\begin{lemma}[\cite{Mo2}, lemme 3.1]
\label{45}

Soit $r \geq 2$, on a $\left|\mathcal{C}_{r}(q)\right|=q^{r}+(-1)^{r}q$.

\end{lemma}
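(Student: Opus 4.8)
The plan is to recognize $\mathcal{C}_{r}(q)$ as the set of closed walks of length $r$ in a complete graph, and then to count these walks either by a one-line recursion or, equivalently, by computing a trace.

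First I would note that $\mathcal{D}(\mathbb{F}_{q})$, the set of vector lines of $\mathbb{F}_{q}^{2}$, has exactly $q+1$ elements (the points of $\mathbb{P}^{1}(\mathbb{F}_{q})$). The conditions defining $\mathcal{C}_{r}(q)$ say precisely that the cyclic sequence $d_{1},d_{2},\ldots,d_{r},d_{1}$ is a closed walk of length $r$ in the complete graph $K_{q+1}$ on the vertex set $\mathcal{D}(\mathbb{F}_{q})$: consecutive terms are distinct, and $d_{r}\neq d_{1}$.

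Next I would count the "open" version as an auxiliary quantity: let $P_{r}$ be the number of tuples $(d_{1},\ldots,d_{r})\in\mathcal{D}(\mathbb{F}_{q})^{r}$ with $d_{i}\neq d_{i+1}$ for $1\le i\le r-1$ and \emph{no} constraint relating $d_{r}$ and $d_{1}$. Choosing $d_{1}$ freely among $q+1$ lines and each subsequent $d_{i+1}$ among the $q$ lines different from $d_{i}$ gives $P_{r}=(q+1)q^{r-1}$. Now split $P_{r}=\left|\mathcal{C}_{r}(q)\right|+Q_{r}$, where $Q_{r}$ counts those tuples with $d_{r}=d_{1}$. A tuple counted by $Q_{r}$ (with $r\ge 3$) satisfies $d_{r-1}\neq d_{r}=d_{1}$, hence its truncation $(d_{1},\ldots,d_{r-1})$ lies in $\mathcal{C}_{r-1}(q)$; conversely appending $d_{r}:=d_{1}$ to any element of $\mathcal{C}_{r-1}(q)$ produces exactly such a tuple. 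This bijection gives $Q_{r}=\left|\mathcal{C}_{r-1}(q)\right|$ and therefore
\[ \left|\mathcal{C}_{r}(q)\right| = (q+1)q^{r-1} - \left|\mathcal{C}_{r-1}(q)\right|, \qquad r\ge 3, \]
with initial value $\left|\mathcal{C}_{2}(q)\right| = (q+1)q = q^{2}+q$ (the ordered pairs of distinct lines).

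Finally I would solve this linear recursion: a particular solution is $q^{r}$ and the homogeneous solutions are $c(-1)^{r}$, so $\left|\mathcal{C}_{r}(q)\right|=q^{r}+c(-1)^{r}$; the initial value forces $c=q$, giving $\left|\mathcal{C}_{r}(q)\right|=q^{r}+(-1)^{r}q$, which is the claim. As an alternative proof of the same formula, one may observe that $\left|\mathcal{C}_{r}(q)\right|$ equals $\mathrm{tr}(A^{r})$ for $A=J-\id$ the adjacency matrix of $K_{q+1}$ (with $J$ the all-ones $(q+1)\times(q+1)$ matrix), whose eigenvalues are $q$ with multiplicity $1$ and $-1$ with multiplicity $q$, so that $\mathrm{tr}(A^{r})=q^{r}+q(-1)^{r}$. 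There is no genuine obstacle here; the only point demanding a little care is handling the cyclic constraint $d_{r}\neq d_{1}$ correctly, i.e. not conflating $\mathcal{C}_{r}(q)$ with the easier open count $P_{r}$.
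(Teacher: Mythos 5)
Your argument is correct. Note that the paper does not prove this lemma at all: it is imported verbatim from \cite{Mo2} (lemme 3.1), where the proof is essentially the computation you give. Your identification of $\left|\mathcal{C}_{r}(q)\right|$ with the number of proper $(q+1)$-colourings of the cycle $C_{r}$ (equivalently, closed walks of length $r$ in $K_{q+1}$, equivalently $\mathrm{tr}\bigl((J-\id)^{r}\bigr)$) is the standard route, the recursion $\left|\mathcal{C}_{r}(q)\right|=(q+1)q^{r-1}-\left|\mathcal{C}_{r-1}(q)\right|$ is set up and solved correctly, and the initial value $\left|\mathcal{C}_{2}(q)\right|=q^{2}+q$ matches the claimed formula. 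Nothing is missing.
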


\noindent De plus, les cardinaux de $\mathcal{C}_{r}(q)^{+}$ et de $\mathcal{C}_{r}(q)^{-}$ sont liés par la formule de dénombrement ci-dessous :

\begin{lemma}[\cite{Mo2}, lemme 3.2]
\label{46}

Soit $r \geq 4$ pair, on a 
\begin{itemize}
\item $\left|\mathcal{C}_{r}(q)^{+}\right|=\left|\mathcal{C}_{r-1}(q)\right|+q\left|\mathcal{C}_{r-2}(q)^{-}\right|$;
\item $\left|\mathcal{C}_{r}(q)^{-}\right|=\left|\mathcal{C}_{r-1}(q)\right|+q\left|\mathcal{C}_{r-2}(q)^{+}\right|$.
\end{itemize}

\end{lemma}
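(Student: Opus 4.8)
The plan is to fix an even integer $r\geq 4$ and to split $\mathcal{C}_{r}(q)$ according to whether $d_{1}=d_{3}$ or not, the two cases producing respectively the term $|\mathcal{C}_{r-1}(q)|$ and the term $q|\mathcal{C}_{r-2}(q)^{\mp}|$. Throughout I fix a basis of $\mathbb{F}_{q}^{2}$ and compute every determinant in it, and I recall that $|\mathcal{D}(\mathbb{F}_{q})|=q+1$ and that $\mathcal{C}_{r}(q)^{+}$, $\mathcal{C}_{r}(q)^{-}$ are the fibres $f^{-1}(-1)$, $f^{-1}(1)$. Put
\[\mathcal{A}=\{(d_{1},\ldots,d_{r})\in\mathcal{C}_{r}(q)\mid d_{1}\neq d_{3}\},\qquad \mathcal{B}=\{(d_{1},\ldots,d_{r})\in\mathcal{C}_{r}(q)\mid d_{1}=d_{3}\},\]
so that $\mathcal{C}_{r}(q)^{\pm}=(\mathcal{A}\cap\mathcal{C}_{r}(q)^{\pm})\sqcup(\mathcal{B}\cap\mathcal{C}_{r}(q)^{\pm})$. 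It then suffices to prove $|\mathcal{A}\cap\mathcal{C}_{r}(q)^{+}|=|\mathcal{A}\cap\mathcal{C}_{r}(q)^{-}|=|\mathcal{C}_{r-1}(q)|$ and $|\mathcal{B}\cap\mathcal{C}_{r}(q)^{\pm}|=q|\mathcal{C}_{r-2}(q)^{\mp}|$, and add.

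For $\mathcal{A}$, I would use the forgetful map $\pi\colon\mathcal{A}\to\mathcal{C}_{r-1}(q)$, $(d_{1},d_{2},d_{3},\ldots,d_{r})\mapsto(d_{1},d_{3},d_{4},\ldots,d_{r})$, which is well defined because $d_{1}\neq d_{3}$ and the other defining conditions of $\mathcal{C}_{r-1}(q)$ are inherited. Its fibre over $(e_{1},\ldots,e_{r-1})$ is $\{(e_{1},d_{2},e_{2},e_{3},\ldots,e_{r-1})\mid d_{2}\in\mathcal{D}(\mathbb{F}_{q})\setminus\{e_{1},e_{2}\}\}$, of cardinality $q-1$. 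One then expands $f$ on such a tuple: since $r$ is even, the determinants not involving $d_{2}$ regroup into a single scalar $h=h(e_{1},\ldots,e_{r-1})$, a product of determinants of cyclically adjacent lines of $(e_{1},\ldots,e_{r-1})$ divided by another such product; as $(e_{1},\ldots,e_{r-1})\in\mathcal{C}_{r-1}(q)$ all these determinants are nonzero, so $h\in\mathbb{F}_{q}^{*}$ and $h$ does not depend on $d_{2}$, while the part depending on $d_{2}$ is exactly $\det(e_{1},d_{2})/\det(d_{2},e_{2})$. Choosing $v_{1},v_{2}$ directing $e_{1},e_{2}$ (a basis, since $e_{1}\neq e_{2}$) and writing $d_{2}=\mathrm{vect}(\alpha v_{1}+\beta v_{2})$, this quotient equals $\beta/\alpha$, and $d_{2}\mapsto\beta/\alpha$ is a bijection from the fibre onto $\mathbb{F}_{q}^{*}$. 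Hence $f$ restricted to the fibre is the bijection $d_{2}\mapsto(\beta/\alpha)h$ onto $\mathbb{F}_{q}^{*}$, so exactly one tuple of the fibre lies in $\mathcal{C}_{r}(q)^{+}$ and exactly one in $\mathcal{C}_{r}(q)^{-}$. Summing over fibres gives $|\mathcal{A}\cap\mathcal{C}_{r}(q)^{\pm}|=|\mathcal{C}_{r-1}(q)|$.

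For $\mathcal{B}$, when $d_{1}=d_{3}$ I would delete $d_{2}$ and $d_{3}$: the tuple $(d_{1},d_{4},d_{5},\ldots,d_{r})$ has length $r-2$ and lies in $\mathcal{C}_{r-2}(q)$ (here one uses $d_{1}=d_{3}\neq d_{4}$ and $d_{r}\neq d_{1}$). A short computation shows that the factor $\det(d_{1},d_{2})$ of the numerator of $f$ and the factor $\det(d_{2},d_{3})=\det(d_{2},d_{1})$ of its denominator combine to $-1$ regardless of $d_{2}$, while the remaining factors reassemble precisely into $f(d_{1},d_{4},\ldots,d_{r})$; thus $f(d_{1},d_{2},d_{1},d_{4},\ldots,d_{r})=-f(d_{1},d_{4},\ldots,d_{r})$. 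Consequently, for every one of the $q$ admissible values $d_{2}\in\mathcal{D}(\mathbb{F}_{q})\setminus\{d_{1}\}$ (the only constraint, since $d_{1}=d_{3}$), one has $(d_{1},\ldots,d_{r})\in\mathcal{C}_{r}(q)^{+}\iff f(d_{1},d_{4},\ldots,d_{r})=1\iff(d_{1},d_{4},\ldots,d_{r})\in\mathcal{C}_{r-2}(q)^{-}$. Running this backwards, $((d_{1},d_{4},\ldots,d_{r}),d_{2})\mapsto(d_{1},d_{2},d_{1},d_{4},\ldots,d_{r})$ is a bijection $\mathcal{C}_{r-2}(q)^{-}\times(\mathcal{D}(\mathbb{F}_{q})\setminus\{d_{1}\})\to\mathcal{B}\cap\mathcal{C}_{r}(q)^{+}$, so $|\mathcal{B}\cap\mathcal{C}_{r}(q)^{+}|=q|\mathcal{C}_{r-2}(q)^{-}|$; the same argument with $+$ and $-$ exchanged gives $|\mathcal{B}\cap\mathcal{C}_{r}(q)^{-}|=q|\mathcal{C}_{r-2}(q)^{+}|$. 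Adding the contributions of $\mathcal{A}$ and $\mathcal{B}$ yields the two stated identities.

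The only real difficulty is bookkeeping in the $\mathcal{A}$ case: one must keep exact track of which determinants land in the numerator and which in the denominator of $f$ after the insertion of $d_{2}$ — including the wrap-around term $\det(d_{r},d_{1})$ — so as to exhibit cleanly the single $d_{2}$-dependent factor $\det(e_{1},d_{2})/\det(d_{2},e_{2})$; the fact that $r$ is even is precisely what makes this splitting consistent. The $\mathcal{B}$ case requires only the easier observation that the coincidence $d_{1}=d_{3}$ forces the factor attached to $d_{2}$ to be the constant $-1$.
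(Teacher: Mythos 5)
Your argument is correct. Note first that the paper itself gives no proof of this statement: Lemme \ref{46} is quoted verbatim from \cite{Mo2} (lemme 3.2), so there is no internal proof to compare against; your write-up in fact supplies the missing argument, and it is the natural insertion/deletion one. All the delicate points check out: in the case $d_1\neq d_3$, the fibre of the forgetful map has $q-1$ elements, the $d_2$-independent factor $h$ (for a fixed choice of directing vectors) is a nonzero constant because all its determinants come from cyclically adjacent, hence distinct, lines of the $(r-1)$-tuple, and the $d_2$-dependent factor $\det(e_1,d_2)/\det(d_2,e_2)=\beta/\alpha$ sweeps $\mathbb{F}_q^*$ bijectively, so each fibre meets $f^{-1}(-1)$ and $f^{-1}(1)$ exactly once; in the case $d_1=d_3$, taking the same directing vector for $d_1$ and $d_3$ gives $\det(v_1,w)/\det(w,v_1)=-1$ and the remaining factors reassemble exactly into $f(d_1,d_4,\ldots,d_r)$, which yields the sign swap $\mathcal{C}_r(q)^{\pm}\leftrightarrow\mathcal{C}_{r-2}(q)^{\mp}$ demanded by the statement, with $q$ free choices of $d_2\neq d_1$. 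As a sanity check, your two subcounts also recover $|\mathcal{C}_r(q)|=(q-1)|\mathcal{C}_{r-1}(q)|+q|\mathcal{C}_{r-2}(q)|=q^r+q$, consistent with Lemme \ref{45}. The only cosmetic caveat is that $h$ and the factor $-1$ are only defined relative to a fixed choice of directing vectors (they are not intrinsic individually, only their product with the $d_2$-factor is), but since you fix such a choice once and for all this is harmless.
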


\noindent Notons, en particulier, que pour $r \geq 4$, $\left|\mathcal{C}_{r}(q)^{+}\right| \neq 0$ et $\left|\mathcal{C}_{r}(q)^{-}\right| \neq 0$. Par ailleurs, tous les couples de droites distinctes appartiennent à $\mathcal{C}_{2}(q)^{+}$. Donc, $\left|\mathcal{C}_{2}(q)^{+}\right| \neq 0$. En revanche, $\mathcal{C}_{2}(q)^{-}=\emptyset$.
\\
\\ \noindent Soient $u \in {\rm GL}(\mathbb{K}^{2})$, $m \geq 2$ et $(d_{1},\ldots,d_{2m}) \in \mathcal{C}_{2m}(\mathbb{K})^{-}$ avec $d_{i}={\rm vect}(v_{i})$. Comme $u$ est inversible, on a, pour tout $i$ dans $[\![1;n]\!]$, $u(d_{i}) \neq u(d_{i+1})$. De plus, en utilisant la formule ${\rm det}_{\mathcal{B}}(u(v_{i}),u(v_{i+1}))={\rm det}(u){\rm det}_{\mathcal{B}}(v_{i},v_{i+1})$, on a $f(u(d_{1}),\ldots,u(d_{2m}))=1$. Ainsi, on peut définir une action de ${\rm PGL}(\mathbb{K}^{2})$ sur $\mathcal{C}_{2m}(\mathbb{K})^{-}$ de la façon suivante : \[\begin{array}{ccccc} 
\alpha & : & {\rm PGL}(\mathbb{K}^{2}) \times \mathcal{C}_{2m}(\mathbb{K})^{-} & \longrightarrow & \mathcal{C}_{2m}(\mathbb{K})^{-} \\
 & & (\{\lambda u, \lambda \in \mathbb{K}^{*}\}, (d_{1},\ldots,d_{2m})) & \longmapsto & (u(d_{1}),\ldots,u(d_{2m}))  \\
\end{array}.\]

\noindent On note $\widehat{\mathcal{M}}_{2m}(\mathbb{K})^{-}$ les orbites de $\mathcal{C}_{2m}(\mathbb{K})^{-}$ sous cette action. Si $m=1$, on pose $\widehat{\mathcal{M}}_{2m}(\mathbb{K})^{-}=\emptyset$. On procède de manière analogue pour $\mathcal{C}_{2m}(\mathbb{K})^{+}$ en désignant par $\widehat{\mathcal{M}}_{2m}(\mathbb{K})^{+}$ les orbites. On dispose du résultat suivant :

\begin{lemma}[\cite{Mo2}, Théorème 4]
\label{47}

Soit $p$ un nombre premier impair et $q$ une puissance de $p$. On a pour $m \geq 1$ :
\begin{itemize}
\item si $m$ pair, $\left|\widehat{\mathcal{M}}_{2m}(q)^{+}\right|=\binom{m}{2}_{q}$;
\item si $m$ impair, $\left|\widehat{\mathcal{M}}_{2m}(q)^{+}\right|=\binom{m}{2}_{q}+[m-1]_{q}+1$.
\end{itemize}

\end{lemma}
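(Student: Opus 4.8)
Le plan est d'appliquer un argument de comptage d'orbites (formule de Burnside) à l'action de ${\rm PGL}(\mathbb{F}_{q}^{2})$ sur $\mathcal{C}_{2m}(q)^{+}$, en classant les orbites suivant le nombre de droites distinctes apparaissant dans un $2m$-uplet. On utilisera que $\left|{\rm PGL}(\mathbb{F}_{q}^{2})\right|=q^{3}-q$ et que ${\rm PGL}(\mathbb{F}_{q}^{2})$ agit de façon strictement $3$-transitive sur l'ensemble des droites, de sorte qu'un élément non trivial fixe au plus deux droites ; l'hypothèse $p$ impair sert ici à assurer $1 \neq -1$, donc que les ensembles $\mathcal{C}_{2m}(q)^{+}$ et $\mathcal{C}_{2m}(q)^{-}$ soient réellement distincts.

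Je traiterais d'abord les uplets $(d_{1},\ldots,d_{2m}) \in \mathcal{C}_{2m}(q)^{+}$ faisant intervenir au moins trois droites distinctes : un élément de ${\rm PGL}(\mathbb{F}_{q}^{2})$ fixant un tel uplet fixe chacune de ses composantes, donc au moins trois droites, donc est l'identité ; ces uplets forment ainsi des orbites libres, de cardinal $q^{3}-q$. Ensuite, un uplet de $\mathcal{C}_{2m}(q)$ ne faisant intervenir que deux droites est nécessairement de la forme alternée $(d,d',\ldots,d,d')$ (la longueur étant paire), et le calcul direct des déterminants intervenant dans $f$ donne $f(d,d',\ldots,d,d')=(-1)^{m}$. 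De tels uplets appartiennent donc à $\mathcal{C}_{2m}(q)^{+}$ si et seulement si $m$ est impair ; il y en a alors exactement $q(q+1)$ (un par couple ordonné de droites distinctes) et ils forment une unique orbite, dont le stabilisateur, d'ordre $q-1$, est l'image dans ${\rm PGL}(\mathbb{F}_{q}^{2})$ d'un tore déployé. Le cas d'une seule droite est exclu puisque $d_{i} \neq d_{i+1}$, et l'action préserve le nombre de droites distinctes, donc ces familles d'orbites ne se recoupent pas. En posant $\delta=1$ si $m$ est impair et $\delta=0$ sinon, on aboutit à
\[\left|\widehat{\mathcal{M}}_{2m}(q)^{+}\right|=\frac{\left|\mathcal{C}_{2m}(q)^{+}\right|-\delta\, q(q+1)}{q^{3}-q}+\delta.\]

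Il resterait à calculer $\left|\mathcal{C}_{2m}(q)^{+}\right|$. En posant $a_{m}=\left|\mathcal{C}_{2m}(q)^{+}\right|$ et $b_{m}=\left|\mathcal{C}_{2m}(q)^{-}\right|$, le lemme~\ref{45} donne $a_{1}=\left|\mathcal{C}_{2}(q)\right|=q^{2}+q$ (tout couple de droites distinctes vérifie $f=-1$), $b_{1}=0$ et $\left|\mathcal{C}_{2m-1}(q)\right|=q^{2m-1}-q$, tandis que le lemme~\ref{46} fournit, pour $m \geq 2$, les relations $a_{m}=\left|\mathcal{C}_{2m-1}(q)\right|+qb_{m-1}$ et $b_{m}=\left|\mathcal{C}_{2m-1}(q)\right|+qa_{m-1}$. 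En soustrayant ces deux égalités on obtient $a_{m}-b_{m}=(-1)^{m-1}q^{m}(q+1)$, et en les additionnant une récurrence linéaire d'ordre $1$ vérifiée par $a_{m}+b_{m}$, que l'on résout explicitement ; on en déduit
\[a_{m}=\begin{cases} (q^{3}-q)\binom{m}{2}_{q} & \text{si } m \text{ est pair},\\[6pt] q(q+1)+(q^{3}-q)\left(\binom{m}{2}_{q}+[m-1]_{q}\right) & \text{si } m \text{ est impair}.\end{cases}\]
En reportant dans la formule précédente on obtient les deux expressions annoncées (le cas $m=1$ se vérifiant directement puisque $\mathcal{C}_{2}(q)^{+}$ est une seule orbite). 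Le contenu conceptuel — la classification des orbites — est court ; l'obstacle principal est la partie calculatoire : résoudre la récurrence pour $\left|\mathcal{C}_{2m}(q)^{+}\right|$ puis reconnaître le résultat sous la forme des coefficients binomiaux de Gauss ci-dessus, identités qui constituent l'essentiel de la vérification.
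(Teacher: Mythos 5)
Votre preuve est correcte : le calcul $f(d,d',\ldots,d,d')=(-1)^m$, le décompte $q(q+1)$ des uplets alternés formant une seule orbite, la liberté des orbites des uplets à au moins trois droites distinctes (via la $3$-transitivité stricte de ${\rm PGL}(\mathbb{F}_q^2)$), la valeur $a_m-b_m=(-1)^{m-1}q^m(q+1)$ et les expressions finales de $\left|\mathcal{C}_{2m}(q)^{+}\right|$ se vérifient (je les ai contrôlées pour $m\le 4$ et en général). Le papier, lui, ne démontre pas ce lemme — il l'importe de [Mo2] — mais votre stratégie est exactement celle qu'il applique au cas compagnon $\widehat{\mathcal{M}}_{2m}(q)^{-}$ dans le lemme \ref{comptage} (partition des orbites selon le nombre de droites distinctes, puis lemmes \ref{45} et \ref{46} pour les cardinaux) ; seule réserve cosmétique, l'argument utilisé est l'équation des classes et non la \og formule de Burnside \fg{} que vous annoncez.
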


Nous rappelons qu'une application projective linéaire dans l'espace projectif $\mathbb{P}^r$ est déterminée par l'image de $r+1$ points.
À partir des résultats précédents, on peut calculer $\left|\widehat{\mathcal{M}}_{2m}(q)^{-}\right|$.

\begin{lemma}
\label{comptage}

Soient $p$ un nombre premier impair, $q$ une puissance de $p$ et $n=2m$. On a pour $m \geq 1$ :

\begin{itemize}
\item si $m$ est impair $\left|\widehat{\mathcal{M}}_{2m}(q)^{-}\right|=\binom{m}{2}_{q}$;
\item si $m$ est pair, $\left|\widehat{\mathcal{M}}_{2m}(q)^{-}\right|=\binom{m}{2}_{q}+q[m-2]_{q}+2$.
\end{itemize}

\end{lemma}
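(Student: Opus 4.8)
The plan is to compute $\left|\widehat{\mathcal{M}}_{2m}(q)^{-}\right|$ by orbit counting for the action $\alpha$ of ${\rm PGL}(\mathbb{F}_{q}^{2})$ on $\mathcal{C}_{2m}(q)^{-}$, feeding in the value of $\left|\widehat{\mathcal{M}}_{2m}(q)^{+}\right|$ supplied by Lemma \ref{47} together with the recurrences of Lemma \ref{46}. The case $m=1$ is immediate, since $\widehat{\mathcal{M}}_{2}(q)^{-}=\emptyset$ by definition and $\binom{1}{2}_{q}=0$; so assume $m\geq 2$.

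The first step is to describe the stabilizers for $\alpha$. Since a non-scalar matrix of ${\rm GL}_{2}$ has at most two eigenlines, a non-identity element of ${\rm PGL}(\mathbb{F}_{q}^{2})$ fixes at most two lines of $\mathbb{F}_{q}^{2}$; hence if $u$ fixes a tuple $(d_{1},\ldots,d_{2m})\in\mathcal{C}_{2m}(q)$ whose entries take at least three distinct values, then $u=\id$. As consecutive entries of an element of $\mathcal{C}_{2m}(q)$ are distinct, the tuples with non-trivial stabilizer are exactly the alternating ones $(d,d',d,d',\ldots,d,d')$ with $d\neq d'$, and for such a tuple the stabilizer is the subgroup of ${\rm PGL}(\mathbb{F}_{q}^{2})$ fixing $d$ and $d'$, a torus of order $q-1$. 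Taking $v_{i}$ to be a fixed direction vector of $d$ for $i$ odd and of $d'$ for $i$ even, a direct computation gives $f(d,d',\ldots,d,d')=(-1)^{m}$, so the alternating tuples lie in $\mathcal{C}_{2m}(q)^{-}$ when $m$ is even and in $\mathcal{C}_{2m}(q)^{+}$ when $m$ is odd (here the hypothesis $p$ odd is used, so that $1\neq-1$ and these two sets are disjoint). Finally, since ${\rm PGL}(\mathbb{F}_{q}^{2})$ acts transitively on ordered pairs of distinct lines, the $q(q+1)$ alternating tuples form a single $\alpha$-orbit.

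Consequently, depending on the parity of $m$, exactly one of $\mathcal{C}_{2m}(q)^{\pm}$ carries a free action of ${\rm PGL}(\mathbb{F}_{q}^{2})$, and the other one splits into a single exceptional orbit of size $q(q+1)=(q^{3}-q)/(q-1)$ together with orbits on which the action is free. With $\left|{\rm PGL}(\mathbb{F}_{q}^{2})\right|=q^{3}-q$, orbit--stabilizer then gives, for $m$ even, $\left|\widehat{\mathcal{M}}_{2m}(q)^{+}\right|(q^{3}-q)=\left|\mathcal{C}_{2m}(q)^{+}\right|$ and $\bigl(\left|\widehat{\mathcal{M}}_{2m}(q)^{-}\right|-1\bigr)(q^{3}-q)=\left|\mathcal{C}_{2m}(q)^{-}\right|-q(q+1)$, and the symmetric pair of identities for $m$ odd (with $+$ and $-$ exchanged). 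Next I would telescope the recurrences of Lemma \ref{46} to get $\left|\mathcal{C}_{2m}(q)^{+}\right|-\left|\mathcal{C}_{2m}(q)^{-}\right|=(-1)^{m-1}q^{m}(q+1)$ for $m\geq 1$, the base case being $\left|\mathcal{C}_{2}(q)^{+}\right|-\left|\mathcal{C}_{2}(q)^{-}\right|=q^{2}+q$ since $\mathcal{C}_{2}(q)^{-}=\emptyset$. Subtracting the two identities above and substituting this difference yields, in both parities, $\left|\widehat{\mathcal{M}}_{2m}(q)^{-}\right|=\left|\widehat{\mathcal{M}}_{2m}(q)^{+}\right|+\eta\bigl(1+[m-1]_{q}\bigr)$ with $\eta=-1$ for $m$ odd and $\eta=1$ for $m$ even. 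Substituting the formulas of Lemma \ref{47} and simplifying then produces the claim: for $m$ odd the two copies of $[m-1]_{q}$ cancel, leaving $\binom{m}{2}_{q}$; for $m$ even one gets $\binom{m}{2}_{q}+1+[m-1]_{q}$, which equals $\binom{m}{2}_{q}+q[m-2]_{q}+2$ because $[m-1]_{q}=1+q[m-2]_{q}$.

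The only genuinely delicate point is the stabilizer analysis of the second paragraph: one must pin down precisely which tuples of $\mathcal{C}_{2m}(q)$ are fixed by a non-trivial projectivity, verify that the exceptional tuples form a single orbit, and check that this orbit lies in $\mathcal{C}_{2m}(q)^{-}$ or in $\mathcal{C}_{2m}(q)^{+}$ according to the parity of $m$. Everything afterwards is bookkeeping with orbit--stabilizer, the recurrences of Lemma \ref{46}, and elementary manipulations of $q$-analogues.
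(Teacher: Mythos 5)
Your proof is correct, and its core is the same as the paper's: identify the alternating tuples $(d,d',d,d',\ldots)$ as the only elements of $\mathcal{C}_{2m}(q)$ with non-trivial stabilizer, note that they form a single orbit of size $q(q+1)$ lying in $\mathcal{C}_{2m}(q)^{-}$ or $\mathcal{C}_{2m}(q)^{+}$ according to the parity of $m$, and then combine orbit counting with Lemmas \ref{46} and \ref{47}. The only real difference is organizational. The paper works one level down: it inverts the orbit count to extract $\left|\mathcal{C}_{2m-2}(q)^{+}\right|$ from $\left|\widehat{\mathcal{M}}_{2m-2}(q)^{+}\right|$ (Lemma \ref{47} at level $m-1$), feeds this into a single application of Lemma \ref{46} together with Lemma \ref{45} for $\left|\mathcal{C}_{2m-1}(q)\right|$, and divides. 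You instead stay at level $m$: you telescope Lemma \ref{46} into the closed form $\left|\mathcal{C}_{2m}(q)^{+}\right|-\left|\mathcal{C}_{2m}(q)^{-}\right|=(-1)^{m-1}q^{m}(q+1)$ and deduce the clean relation $\left|\widehat{\mathcal{M}}_{2m}(q)^{-}\right|=\left|\widehat{\mathcal{M}}_{2m}(q)^{+}\right|\pm\bigl(1+[m-1]_{q}\bigr)$, then apply Lemma \ref{47} at level $m$. Your version avoids Lemma \ref{45} (beyond the trivial base case $\left|\mathcal{C}_{2}(q)\right|=q^{2}+q$) and makes the $+/-$ comparison at fixed $m$ explicit, which is a slightly tidier way to see where the correction terms $q[m-2]_{q}+2$ come from; I verified the sign bookkeeping ($\eta=\pm1$) and the identity $[m-1]_{q}=1+q[m-2]_{q}$, and everything matches the stated formulas, including the degenerate cases $m=1$ and $m=2$.
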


\begin{proof}

\uwave{Supposons $m$ impair.} Si $m=1$ alors $\left|\widehat{\mathcal{M}}_{2m}(q)^{-}\right|=0$. Supposons $m$ supérieur à 3. Comme $m-1$ est pair et non nul, $\mathcal{C}_{2m-2}(q)^{+}$ ne contient pas d'élément de la forme $(d_{1},d_{2},\ldots,d_{1},d_{2})$. Donc, les éléments de $\mathcal{C}_{2m-2}(q)^{+}$ contiennent toujours au moins trois droites distinctes (par définition de $\mathcal{C}_{2m-2}(q)$). Ainsi, les orbites sous l'action de ${\rm PGL}(\mathbb{F}_{q}^{2})$ ont $\left|{\rm PGL}(\mathbb{F}_{q}^{2})\right|$ éléments. Donc, \[\left|\widehat{\mathcal{M}}_{2m-2}(q)^{+}\right|=\frac{\left|\mathcal{C}_{2m-2}(q)^{+}\right|}{\left|{\rm PGL}(\mathbb{F}_{q}^{2})\right|}=\frac{\left|\mathcal{C}_{2m-2}(q)^{+}\right|}{q(q^{2}-1)}.\]

\noindent Par le lemme \ref{47}, $\left|\mathcal{C}_{2m-2}(q)^{+}\right|=q(q^{2}-1)\binom{m-1}{2}_{q}$ ($m-1 \geq 1$).
\\
\\Par les lemmes \ref{45} et \ref{46}, $\left|\mathcal{C}_{2m}(q)^{-}\right|=q^{n-1}-q+q^{2}(q^{2}-1)\binom{m-1}{2}_{q}$ ($2m \geq 4$).
\\
\\Comme $m$ est impair, $\mathcal{C}_{2m}(q)^{-}$ ne contient pas d'élément de la forme $(d_{1},d_{2},\ldots,d_{1},d_{2})$. Donc, les éléments de $\mathcal{C}_{2m}(q)^{-}$ contiennent toujours au moins trois droites distinctes et les orbites sous l'action de ${\rm PGL}(\mathbb{F}_{q}^{2})$ ont $\left|{\rm PGL}(\mathbb{F}_{q}^{2})\right|$ éléments. Ainsi, \[\left|\widehat{\mathcal{M}}_{2m}(q)^{-}\right|=\frac{\left|\mathcal{C}_{2m}(q)^{-}\right|}{\left|{\rm PGL}(\mathbb{F}_{q}^{2})\right|}=\frac{-1+q^{n-2}}{q^{2}-1}+q\binom{m-1}{2}_{q}=\binom{m}{2}_{q}.\]

\noindent \uwave{Supposons $m$ pair.} Comme $m-1$ est impair, $\mathcal{C}_{2m-2}(q)^{+}$ contient les éléments de la forme $(d_{1},d_{2},\ldots,d_{1},d_{2})$ qui sont tous dans la même orbite. Les autres éléments contiennent toujours au moins trois droites distinctes et leur orbite sous l'action de ${\rm PGL}(\mathbb{F}_{q}^{2})$ ont $\left|{\rm PGL}(\mathbb{F}_{q}^{2})\right|$ éléments. Il y a $q+1$ droites vectorielles dans $\mathbb{F}_{q}^{2}$, donc il y a $q(q+1)$ éléments de la forme $(d_{1},d_{2},\ldots,d_{1},d_{2})$. Donc, \[\left|\widehat{\mathcal{M}}_{2m-2}(q)^{+}\right|=\frac{\left|\mathcal{C}_{2m-2}(q)^{+}\right|-q(q+1)}{\left|{\rm PGL}(\mathbb{F}_{q}^{2})\right|}+1=\frac{\left|\mathcal{C}_{2m-2}(q)^{+}\right|-q(q+1)}{q(q^{2}-1)}+1.\]

\noindent Par le lemme \ref{47}, $\left|\mathcal{C}_{2m-2}(q)^{+}\right|=(\binom{m-1}{2}_{q}+[m-2]_{q})q(q^{2}-1)+q(q+1)$ ($m-1 \geq 1$).
\\
\\Par les lemmes \ref{45} et \ref{46}, $\left|\mathcal{C}_{2m}(q)^{-}\right|=q^{n-1}-q+(\binom{m-1}{2}_{q}+[m-2]_{q})q^{2}(q^{2}-1)+q^{2}(q+1)$ ($2m \geq 4$).
\\
\\Comme $m$ est pair, $\mathcal{C}_{2m}(q)^{-}$ contient les éléments de la forme $(d_{1},d_{2},\ldots,d_{1},d_{2})$ qui sont tous dans la même orbite. Les autres éléments de $\mathcal{C}_{2m}(q)^{-}$ contiennent toujours au moins trois droites distinctes et les orbites sous l'action de ${\rm PGL}(\mathbb{F}_{q}^{2})$ ont $\left|{\rm PGL}(\mathbb{F}_{q}^{2})\right|$ éléments. Ainsi, \[\left|\widehat{\mathcal{M}}_{2m}(q)^{-}\right|=\frac{\left|\mathcal{C}_{2m}(q)^{-}\right|-q(q+1)}{\left|{\rm PGL}(\mathbb{F}_{q}^{2})\right|}+1=\frac{q^{n-2}-1}{q^{2}-1}+q\left(\binom{m -1}{2}_{q}+[m-2]_{q}\right)+2=\binom{m}{2}_{q}+q[m-2]_{q}+2.\]
\end{proof}

\subsection{Les ensembles $\widehat{\Omega}_{n}(\mathbb{K})$}
\label{action}

À la lueur de la proposition \ref{48}, on peut définir une action de $\mathbb{K}^{*}$ sur $\Omega_{n}(\mathbb{K})$ en posant :
\[\begin{array}{ccccc} 
\beta & : & \mathbb{K}^{*} \times \Omega_{n}(\mathbb{K}) & \longrightarrow & \Omega_{n}(\mathbb{K}) \\
 & & (\lambda, (a_{1},\ldots,a_{n})) & \longmapsto & (\lambda a_{1},\lambda^{-1} a_{2},\ldots,\lambda a_{2m-1},\lambda^{-1} a_{2m})  \\
\end{array}.\]

\noindent On note $\widehat{\Omega}_{n}(\mathbb{K})$ les orbites de cette action. 

\begin{lemma}
\label{481}

Soient $n \geq 4$, $n=2m$, $p$ un nombre premier impair et $q$ une puissance de $p$.
 
\begin{itemize}
\item Si $m$ est pair alors $\left|\Omega_{n}(q)\right|=1+(q-1)(\left|\widehat{\Omega}_{n}(q)\right|-1)$;
\item Si $m$ est impair alors $\left|\Omega_{n}(q)\right|=(q-1)\left|\widehat{\Omega}_{n}(q)\right|$.
\end{itemize}

\end{lemma}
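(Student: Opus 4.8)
Le plan est d'appliquer la formule des classes (théorème orbite--stabilisateur) à l'action $\beta$ de $\mathbb{F}_{q}^{*}$ sur $\Omega_{n}(q)$. On commence par déterminer le stabilisateur d'un élément quelconque $(a_{1},\ldots,a_{n}) \in \Omega_{n}(q)$ : un élément $\lambda \in \mathbb{F}_{q}^{*}$ fixe $(a_{1},\ldots,a_{n})$ si et seulement si $\lambda a_{2i-1} = a_{2i-1}$ et $\lambda^{-1} a_{2i} = a_{2i}$ pour tout $i \in [\![1;m]\!]$. Dès qu'une coordonnée $a_{j}$ est non nulle, cela impose $\lambda = 1$ ; ainsi le seul élément de $\Omega_{n}(q)$ ayant un stabilisateur non trivial est le $n$-uplet nul $(0,\ldots,0)$, dont le stabilisateur est $\mathbb{F}_{q}^{*}$ tout entier. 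Par conséquent, toutes les orbites ont pour cardinal $q-1$, à l'exception éventuelle de l'orbite réduite à $\{(0,\ldots,0)\}$ lorsque le $n$-uplet nul appartient à $\Omega_{n}(q)$.

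Il reste à déterminer quand $(0,\ldots,0) \in \Omega_{n}(q)$. Comme
\[ M_{n}(0,\ldots,0) = \begin{pmatrix} 0 & -1 \\ 1 & 0 \end{pmatrix}^{2m} = (-\id)^{m}, \]
le $n$-uplet nul appartient à $\Omega_{n}(q)$ exactement lorsque $m$ est pair, et il appartient à $\Xi_{n}(q)$ (donc pas à $\Omega_{n}(q)$) exactement lorsque $m$ est impair ; notons que $n \geq 4$ entraîne $m \geq 2$, de sorte qu'aucun petit cas n'est particulier.

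Enfin, en sommant les tailles des orbites : si $m$ est impair, les $\left|\widehat{\Omega}_{n}(q)\right|$ orbites ont toutes $q-1$ éléments, d'où $\left|\Omega_{n}(q)\right| = (q-1)\left|\widehat{\Omega}_{n}(q)\right|$ ; si $m$ est pair, exactement une orbite est le singleton $\{(0,\ldots,0)\}$ et les $\left|\widehat{\Omega}_{n}(q)\right| - 1$ orbites restantes ont $q-1$ éléments, d'où $\left|\Omega_{n}(q)\right| = 1 + (q-1)(\left|\widehat{\Omega}_{n}(q)\right| - 1)$. Il n'y a pas de réelle difficulté ici ; les seuls points demandant un peu d'attention sont le calcul du stabilisateur et le placement correct du $n$-uplet nul dans $\Omega_{n}(q)$ ou $\Xi_{n}(q)$ selon la parité de $m$, tous deux immédiats.
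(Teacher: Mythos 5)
Votre démonstration est correcte et suit essentiellement la même voie que celle de l'article : application de l'équation des classes à l'action $\beta$, en distinguant l'orbite singleton du $n$-uplet nul (présent dans $\Omega_{n}(q)$ exactement quand $m$ est pair) des autres orbites de cardinal $q-1$. Vous explicitez simplement deux points que l'article se contente d'affirmer, à savoir le calcul du stabilisateur et l'identité $M_{n}(0,\ldots,0)=(-\id)^{m}$, ce qui rend l'argument plus complet sans en changer la nature.
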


\begin{proof}

Si $m$ est pair alors l'orbite de $(0,\ldots,0)$ ne contient qu'un seul élément tandis que les autres en contiennent $q-1$. En appliquant l'équation des classes, on obtient la formule souhaitée. Si $m$ est impair, $(0,\ldots,0)$ n'est pas solution et chaque orbite contient $q-1$ éléments. En appliquant l'équation des classes, on obtient la formule souhaitée.
\end{proof}

\subsection{Établissement d'une bijection}

Avant de redémontrer le théorème \ref{25}, on a besoin de plusieurs résultats intermédiaires. Leur preuve étant très similaire aux démonstrations des lemmes de la section 2 de \cite{Mo2}, on se contente ici de donner les énoncés. Dans cette sous-section, on considère $m \geq 2$.

\begin{lemma}
\label{49}

Soit $(d_{1},\ldots,d_{2m}) \in \mathcal{C}_{2m}(\mathbb{K})$. Il existe un choix de vecteurs $(v_{1},\ldots,v_{2m})$ relevant $(d_{1},\ldots,d_{2m})$ tel que pour toute base $\mathcal{B}$ \[{\rm det}_{\mathcal{B}}(v_{1},v_{2})={\rm det}_{\mathcal{B}}(v_{2},v_{3})=\ldots={\rm det}_{\mathcal{B}}(v_{2m-1},v_{2m})={\rm det}_{\mathcal{B}}(v_{2m},v_{1})\]
\noindent si et seulement si $(d_{1},\ldots,d_{2m}) \in \mathcal{C}_{2m}(\mathbb{K})^{-}$.

\end{lemma}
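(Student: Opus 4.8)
The plan is to exploit the only freedom one has when lifting the lines $d_i$ to directing vectors, namely rescaling each chosen vector by a nonzero scalar, and to reduce the statement to a closed-up multiplicative recursion.

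First I would note that the asserted chain of equalities does not depend on the base $\mathcal{B}$: changing $\mathcal{B}$ multiplies every ${\rm det}_{\mathcal{B}}(v_i,v_{i+1})$ by the same nonzero scalar, so all $2m$ cyclic determinants are equal for one base if and only if they are equal for every base. Fixing a base $\mathcal{B}$, choose an arbitrary lift $(v_1,\ldots,v_{2m})$ of $(d_1,\ldots,d_{2m})$ with the cyclic convention $v_{2m+1}=v_1$, and put $D_i:={\rm det}_{\mathcal{B}}(v_i,v_{i+1})\in\mathbb{K}^{*}$, which is nonzero since $d_i\neq d_{i+1}$. Any other lift of the same lines has the form $(\lambda_1 v_1,\ldots,\lambda_{2m}v_{2m})$ with $\lambda_i\in\mathbb{K}^{*}$, and its cyclic determinants are $\lambda_i\lambda_{i+1}D_i$; a lift with all of them equal is therefore exactly a choice of scalars $\lambda_i\in\mathbb{K}^{*}$ making $\lambda_i\lambda_{i+1}D_i$ independent of $i$ (cyclically).

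Next I would solve this condition explicitly. Pick $\lambda_1,c\in\mathbb{K}^{*}$ arbitrarily and define $\lambda_{i+1}:=c(\lambda_iD_i)^{-1}$ for $i\geq 1$; then $\lambda_i\lambda_{i+1}D_i=c$ automatically for each $i$, so the new lift has all cyclic determinants equal to $c$ provided only that the relation also holds at $i=2m$, i.e. provided $\lambda_{2m+1}=\lambda_1$. An immediate induction gives $\lambda_{2k+1}=\lambda_1\prod_{j=1}^{k}D_{2j-1}D_{2j}^{-1}$, hence $\lambda_{2m+1}=\lambda_1\,f(d_1,\ldots,d_{2m})$; so a lift with all cyclic determinants equal exists if and only if $f(d_1,\ldots,d_{2m})=1$, that is, if and only if $(d_1,\ldots,d_{2m})\in\mathcal{C}_{2m}(\mathbb{K})^{-}$. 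This settles both implications at once; for necessity one may alternatively argue directly, noting that if such a lift exists then the numerator and the denominator of $f$ are each a product of $m$ copies of the common value $c$, whence $f=1$, and $f$ does not depend on the chosen lift.

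The argument is entirely elementary; the only point that requires attention is the bookkeeping of the alternating product over odd and even indices when unwinding the recursion, and checking that no constraint beyond $f=1$ is imposed (in particular that $\lambda_1$ and $c$ remain free). I expect this index bookkeeping to be the only mild difficulty.
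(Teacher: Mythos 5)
Your argument is correct and complete: the reduction to finding scalars $\lambda_i\in\mathbb{K}^{*}$ with $\lambda_i\lambda_{i+1}D_i$ cyclically constant, the telescoping computation giving $\lambda_{2m+1}=\lambda_1 f(d_1,\ldots,d_{2m})$, and the direct verification that an equal-determinant lift forces $f=c^m/c^m=1$ are all sound, and you correctly dispose of the quantifier over bases at the outset. The paper itself does not prove this lemma (it defers to the analogous lemmas in Morier-Genoud's article \cite{Mo2}), and your rescaling argument is precisely the intended one, so this is a faithful filling-in of the omitted proof rather than a genuinely different route.
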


\begin{lemma}
\label{410}

Soient $(d_{1},\ldots,d_{2m}) \in \mathcal{C}_{2m}(\mathbb{K})^{-}$ et $(v_{1},\ldots,v_{2m})$ un choix de vecteurs  directeurs vérifiant ${\rm det}(v_{1},v_{2})={\rm det}(v_{2},v_{3})=\ldots={\rm det}(v_{2m-1},v_{2m})={\rm det}(v_{2m},v_{1})$. Il existe un unique $n$-uplet $(a_{1},\ldots,a_{n})$ d'éléments de $\mathbb{K}$ vérifiant $v_{i}=a_{i}v_{i-1}-v_{i-2}$ (pour $i \in [\![1;n]\!]$) avec $v_{0}=v_{n}$ et $v_{-1}=v_{n-1}$.
\\
\\De plus, pour tout $\lambda \in \mathbb{K}^{*}$, $(\lambda a_{1},\lambda^{-1} a_{2},\ldots,\lambda a_{2m-1},\lambda^{-1} a_{2m}) \in \Omega_{n}(\mathbb{K})$.

\end{lemma}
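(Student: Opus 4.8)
The plan is to read the tuple $(a_1,\ldots,a_n)$ straight off the recursion and then to recognize the cyclicity $v_0=v_n$, $v_{-1}=v_{n-1}$ as the equation $M_n(a_1,\ldots,a_n)=\id$.

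First I would fix the directing vectors $(v_1,\ldots,v_{2m})$ furnished by Lemma \ref{49}, extend them cyclically by $v_{j+n}=v_j$, and write $c$ for the common value of the scalars ${\rm det}_{\mathcal{B}}(v_j,v_{j+1})$, $j\in\mathbb{Z}/n\mathbb{Z}$, afforded by Lemma \ref{49} (for some, hence any, fixed basis $\mathcal{B}$); note $c\neq 0$ since $d_j\neq d_{j+1}$. Because $(d_1,\ldots,d_{2m})\in\mathcal{C}_{2m}(\mathbb{K})$ one has $d_{i-1}\neq d_{i-2}$ for every $i$ read cyclically (the case $i=2$ uses precisely the condition $d_r\neq d_1$ built into the definition of $\mathcal{C}_r(\mathbb{K})$, and $i=1$ uses $v_0=v_n$, $v_{-1}=v_{n-1}$), so $(v_{i-1},v_{i-2})$ is a basis of $\mathbb{K}^{2}$ and there are unique scalars $a_i,b_i$ with $v_i=a_iv_{i-1}+b_iv_{i-2}$. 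Applying ${\rm det}_{\mathcal{B}}(\,\cdot\,,v_{i-1})$ to this identity gives ${\rm det}_{\mathcal{B}}(v_i,v_{i-1})=b_i\,{\rm det}_{\mathcal{B}}(v_{i-2},v_{i-1})$, that is $-c=b_ic$, whence $b_i=-1$. This yields the required relation $v_i=a_iv_{i-1}-v_{i-2}$ with $a_i$ uniquely determined (uniqueness again because $v_{i-1}\neq 0$), which is the first assertion.

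Next I would put the recursion into matrix form. Let $W_i$ be the invertible $2\times 2$ matrix whose columns are the $\mathcal{B}$-coordinates of $v_i$ and $v_{i-1}$ (invertible since $\det W_i=-c\neq 0$); then $v_i=a_iv_{i-1}-v_{i-2}$ is exactly $W_i=W_{i-1}N(a_i)$ with $N(a):=\begin{pmatrix}a&1\\-1&0\end{pmatrix}=M_1(a)^{\top}$. Multiplying these relations for $i=1,\ldots,n$, and using $W_0=W_n$ (the columns of $W_0$ being $v_0=v_n$ and $v_{-1}=v_{n-1}$) together with the invertibility of $W_0$, we obtain $N(a_1)\cdots N(a_n)=\id$. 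Since transposition reverses products and $N(a)=M_1(a)^{\top}$, the left-hand side equals $M_n(a_1,\ldots,a_n)^{\top}$, so $M_n(a_1,\ldots,a_n)=\id$, i.e. $(a_1,\ldots,a_n)\in\Omega_n(\mathbb{K})$. Finally, as $n=2m$ is even, Lemma \ref{48} applied with $\epsilon=1$ gives $(\lambda a_1,\lambda^{-1}a_2,\ldots,\lambda a_{2m-1},\lambda^{-1}a_{2m})\in\Omega_n(\mathbb{K})$ for every $\lambda\in\mathbb{K}^{*}$, which is the second assertion.

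There is no real obstacle here; this is a continuant-style computation. The only points requiring care are the cyclic index bookkeeping — in particular checking that the hypothesis $d_r\neq d_1$ in the definition of $\mathcal{C}_r(\mathbb{K})$ is exactly what licenses the wrap-around cases $i=1,2$ — and the transpose/reversal of the product in passing from $N(a_1)\cdots N(a_n)$ to $M_n$. One could avoid the transpose by instead storing the pairs as columns $(v_{i-1}\mid v_i)$ or by working with $M_1(a)^{-1}$, but the argument is equally short in any of these variants.
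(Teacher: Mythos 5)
Your argument is correct. The paper does not actually write out a proof of this lemma — it only states it, deferring to the lemmas of Section 2 of \cite{Mo2} — and your continuant-style computation (forcing $b_i=-1$ from the equal-determinant condition so that $v_i=a_iv_{i-1}-v_{i-2}$ with $a_i$ uniquely determined, then translating the cyclic recursion into $N(a_1)\cdots N(a_n)=\id$ with $N(a)=M_1(a)^{\top}$ via the transfer matrices $W_i=W_{i-1}N(a_i)$ and $W_0=W_n$, hence $M_n(a_1,\ldots,a_n)=\id$) is precisely the standard argument the authors are invoking, with the wrap-around cases $i=1,2$ correctly justified by $d_n\neq d_{n-1}$ and $d_n\neq d_1$ and the final twist by $\lambda$ handled, as in the paper, by Lemma \ref{48}.
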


\noindent Soient $D \in \widehat{\mathcal{M}}_{2m}(\mathbb{K})^{-}$ et $(d_{1},\ldots,d_{2m}) \in D$. Soit $(v_{1},\ldots,v_{2m})$ un choix de vecteurs directeurs pour $(d_{1},\ldots,d_{2m})$ vérifiant les conditions du lemme \ref{49}. À partir de ce choix, on définit, grâce au lemme \ref{410}, des éléments $(a_{1},\ldots,a_{n})$ vérifiant $v_{i}=a_{i}v_{i-1}-v_{i-2}$. Notons $B$ l'orbite de ces éléments sous l'action de $\mathbb{K}^{*}$. On montre que $B$ ne dépend que de $D$. On peut ainsi définir une application
\[ \sigma : \widehat{\mathcal{M}}_{2m}(\mathbb{K})^{-} \longrightarrow \widehat{\Omega}_{n}(\mathbb{K}), \quad D\longmapsto B. \]

\begin{lemma}
\label{411}
$\sigma$ est une bijection.
\end{lemma}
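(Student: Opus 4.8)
The strategy is to construct an explicit inverse map $\rho : \widehat{\Omega}_{n}(\mathbb{K}) \longrightarrow \widehat{\mathcal{M}}_{2m}(\mathbb{K})^{-}$ and check $\rho\circ\sigma = \id$ and $\sigma\circ\rho=\id$, following the pattern of the corresponding argument in \cite{Mo2}. First I would describe $\rho$ at the level of representatives: given a $\lambda$-quiddity $(a_{1},\ldots,a_{n}) \in \Omega_{n}(\mathbb{K})$, fix any basis $\mathcal{B}=(e_{1},e_{2})$ of $\mathbb{K}^{2}$, set $v_{0}=e_{1}$, $v_{-1}=e_{2}$ (or another convenient initialization), and define $v_{i}:=a_{i}v_{i-1}-v_{i-2}$ recursively. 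The relation $M_{n}(a_{1},\ldots,a_{n})=\id$ forces $v_{n}=v_{0}$ and $v_{n-1}=v_{-1}$, so the family $(v_{1},\ldots,v_{2m})$ is $n$-periodic; setting $d_{i}:={\rm vect}(v_{i})$ gives a tuple in $\mathcal{C}_{2m}(\mathbb{K})$ (consecutive lines are distinct because $\det_{\mathcal{B}}(v_{i-1},v_{i})=\det_{\mathcal{B}}(v_{i-1},a_{i}v_{i-1}-v_{i-2})=-\det_{\mathcal{B}}(v_{i-1},v_{i-2})$, which by induction is a nonzero constant independent of $i$). That same computation shows all the determinants $\det_{\mathcal{B}}(v_{i},v_{i+1})$ are equal, so by Lemma~\ref{49} the tuple lies in $\mathcal{C}_{2m}(\mathbb{K})^{-}$; one then lets $\rho(a_{1},\ldots,a_{n})$ be its ${\rm PGL}(\mathbb{K}^{2})$-orbit.

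Next I would verify that $\rho$ is well defined on orbits. Replacing $(a_{1},\ldots,a_{n})$ by $(\lambda a_{1},\lambda^{-1}a_{2},\ldots,\lambda a_{2m-1},\lambda^{-1}a_{2m})$ changes the recursively built vectors by the invertible diagonal substitution $v_{2k}\mapsto \lambda v_{2k}$, $v_{2k-1}\mapsto v_{2k-1}$ (up to a global initialization), hence changes each line $d_{i}$ by an element of ${\rm GL}(\mathbb{K}^{2})$ that is constant along the tuple; so the output orbit is unchanged. Changing the initialization $(v_{0},v_{-1})$ or the basis $\mathcal{B}$ acts by a fixed element of ${\rm GL}(\mathbb{K}^{2})$ on all the $v_{i}$ simultaneously, which again does not move the ${\rm PGL}$-orbit. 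Conversely, the map $\sigma$ was already shown in the excerpt to be well defined; so what remains is the two composition identities. For $\sigma\circ\rho=\id_{\widehat{\Omega}_{n}}$: starting from $(a_{i})$, build the $v_{i}$, pass to lines $d_{i}$, and feed this back through Lemma~\ref{410}; the uniqueness clause in Lemma~\ref{410} of the $n$-tuple satisfying $v_{i}=a_{i}v_{i-1}-v_{i-2}$ for a given normalized choice of directing vectors forces the recovered tuple to be $\mathbb{K}^{*}$-equivalent to $(a_{i})$. For $\rho\circ\sigma=\id_{\widehat{\mathcal{M}}_{2m}(\mathbb{K})^{-}}$: starting from $D$, choose $(d_{i})\in D$ and the directing vectors $(v_{i})$ given by Lemma~\ref{49}, produce $(a_{i})$ via Lemma~\ref{410}; re-running the recursion $w_{i}=a_{i}w_{i-1}-w_{i-2}$ with $w_{0}=v_{0}$, $w_{-1}=v_{-1}$ reproduces exactly $w_{i}=v_{i}$, hence the same lines, hence the same ${\rm PGL}$-orbit $D$.

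The main obstacle is bookkeeping rather than a genuine conceptual difficulty: one must be careful that the recursion is run with a consistent normalization so that the two maps are genuine mutual inverses on orbits, and in particular that the equal-determinant normalization of Lemma~\ref{49} is compatible with the diagonal $\mathbb{K}^{*}$-action of Section~\ref{action} (this is precisely what makes $\mathcal{C}_{2m}(\mathbb{K})^{-}$, rather than $\mathcal{C}_{2m}(\mathbb{K})^{+}$, the correct target, matching the sign $-\id$). A secondary point to check is the degenerate orbit: when $m$ is even the tuple $(0,\ldots,0)\in\Omega_{n}(\mathbb{K})$ has a one-element $\mathbb{K}^{*}$-orbit and must correspond under $\sigma$ to the orbit of a two-periodic line-configuration $(d_{1},d_{2},\ldots,d_{1},d_{2})$ in $\mathcal{C}_{2m}(\mathbb{K})^{-}$; verifying this boundary case explicitly (and checking it is consistent with the cardinality count of Lemmas~\ref{481} and~\ref{comptage}) completes the argument. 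Since everything else is a direct transcription of the corresponding lemmas of \cite{Mo2} with the sign changes already accounted for in Lemmas~\ref{49} and~\ref{410}, the proof reduces to assembling these verifications.
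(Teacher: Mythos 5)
Your proposal is correct and follows exactly the route the paper intends: the paper itself omits the proof of Lemma~\ref{411}, referring to the nearly identical lemmas of Section~2 of \cite{Mo2}, and your sketch is a faithful reconstruction of that argument (explicit inverse $\rho$ via the recursion $v_{i}=a_{i}v_{i-1}-v_{i-2}$, well-definedness on orbits, the two composition identities, and the degenerate two-periodic orbit corresponding to $(0,\ldots,0)$ when $m$ is even). No substantive gap to report.
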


\subsection{Calcul de $u_{n,q}^{+}$}

On peut maintenant terminer la preuve du théorème \ref{25}. Par les résultats de la section \ref{chap41}, le théorème est vrai pour $p$ premier impair et $n$ impair. Soient $n \geq 4$, $n=2m$, $p$ un nombre premier impair et $q$ une puissance de $p$.
\\
\\i) Supposons $m$ pair. Par le lemme \ref{481}, $u_{n,q}^{+}=\left|\Omega_{n}(q)\right|=1+(q-1)(\left|\widehat{\Omega}_{n}(q)\right|-1)$. Or, par le lemme \ref{411}, $\left|\widehat{\Omega}_{n}(q)\right|=\left|\widehat{\mathcal{M}}_{2m}(q)^{-}\right|$. De plus, par le lemme \ref{comptage}, $\left|\widehat{\mathcal{M}}_{2m}(q)^{-}\right|=\binom{m}{2}_{q}+q[m-2]_{q}+2$. Donc, 
\[u_{n,q}^{+}=1+(q-1)\left(\binom{m}{2}_{q}+q[m-2]_{q}+2-1\right)=1+(q-1)\binom{m}{2}_{q}+q^{m-1}-q+q-1=(q-1)\binom{m}{2}_{q}+q^{m-1}.\]

\noindent ii) Supposons $m$ impair. Par le lemme \ref{481}, $u_{n,q}^{+}=\left|\Omega_{n}(q)\right|=(q-1)(\left|\widehat{\Omega}_{n}(q)\right|)$. Or, par le lemme \ref{411}, on a $\left|\widehat{\Omega}_{n}(q)\right|=\left|\widehat{\mathcal{M}}_{2m}(q)^{-}\right|$. De plus, par le lemme \ref{comptage}, $\left|\widehat{\mathcal{M}}_{2m}(q)^{-}\right|=\binom{m}{2}_{q}$. Donc, $u_{n,q}^{+}=(q-1)\binom{m}{2}_{q}$.
\qed

\begin{remark}
{\rm On peut aussi interpréter les ensembles $\widehat{\mathcal{M}}_{2m}(q)^{-}$ et $\widehat{\mathcal{M}}_{2m}(q)^{+}$ comme des variétés, voir par exemple \cite{Mo2}.
}
\end{remark}

\section{Algorithme pour calculer le nombre de classes de solutions irréductibles de \eqref{p} sur $\bZ/N\bZ$}
\label{A}

L'algorithme \ref{algo_irr} est un prototype simplifié du programme utilisé pour le calcul numérique des valeurs de $v_N$ pour les petites valeurs de $N$. Pour obtenir toutes les quiddités irréductibles pour un anneau $A$, on appelle $\text{\tt Irréductibles}((),(0),(0,1))$.
Le principe de base est très simple: on ajoute des nombres à $c$ et on  calcule les nouvelles colonnes de la frise en même temps.
Pour obtenir les valeurs associées à $N=13,15,16$, il faut optimiser la technique.

\algo{Irréductibles}{$c=(c_1,\ldots,c_n)$, $u=(u_1,\ldots,u_{n+1})$, $v=(v_1,\ldots,v_{n+2})$}
{Calcule toutes les quiddités irréductibles qui commencent par $c$}
{le début $c$, les deux dernières colonnes $u,v$ de la frise correspondante}
{liste de quiddités irréductibles}
{
\item $R:=\emptyset$
\item for $x$ in $A$:
\item \quad $d:=(c_1,\ldots,c_n,x)$
\item \quad if $(x,c_n,...,c_1) \ge d$:
\item \quad \quad $w:=(0,1,x) \cat (x v_i - u_{i-1} \mid i \in \{3,\ldots,n+2\})$
\item \quad \quad if $\pm 1 \in \{w_3,\ldots,w_{n+3}\}$:
\item \quad \quad \quad if $\pm 1 \notin \{w_3,\ldots,w_{n+2}\}$:
\item \quad \quad \quad \quad $\varepsilon := w_{n+3}$
\item \quad \quad \quad \quad $y:=\varepsilon v_{n+2}$
\item \quad \quad \quad \quad $w':= (0,1,y) \cat (y w_i - v_{i-1} \mid i \in \{3,\ldots,n+3\})$
\item \quad \quad \quad \quad if $\pm 1 \notin \{w'_3,\ldots,w'_{n+2}\}:$
\item \quad \quad \quad \quad \quad $y:=\varepsilon v_{n+2}$
\item \quad \quad \quad \quad \quad $z:=\varepsilon w_{n+2}$
\item \quad \quad \quad \quad \quad $w'':=(0,1,z) \cat (z w'_i - w_{i-1} \mid i \in \{3,\ldots,n+4\})$
\item \quad \quad \quad \quad \quad if $\pm 1 \notin \{w''_3,\ldots,w''_{n+2}\}$ and $w''_{n+5}=-\varepsilon$:
\item \quad \quad \quad \quad \quad \quad $c' :=$ représentant de $(c_1,\ldots,c_n,x,y,z)$ sous l'action du groupe diédral
\item \quad \quad \quad \quad \quad \quad $R := R \cup \{c\}$
\item \quad \quad \quad \quad \quad end if
\item \quad \quad \quad \quad end if
\item \quad \quad \quad end if
\item \quad \quad else
\item \quad \quad \quad $R:=R \cup \text{Irréductibles}(d,v,w)$
\item \quad \quad end if
\item \quad end if
\item end for
\item return $R$
}{algo_irr}

\end{document}